\def\d{\delta}
\def\H{\mathcal{H}}
\def\C{\mathbb{C}}
\def\c2{\mathbb{C}^2}
\def\R{\mathbb{R}}
\def\Z{\mathbb{Z}}
\def\N{\mathbb{N}}
\def\D{\mathbb{D}}
\def\P{\mathbb{P}}
\def\E{\mathcal{E}}
\def\W{\mathcal{W}}
\def\1{\bold{1}}
\def\B{\mathbb{B}}
\def\a{\alpha}
\def\e{\varepsilon}
\def\l{\lambda}
\def\f{\varphi}
\def\g{\gamma}
\def\p{\psi}
\def\om{\omega}
\def\z{\overline{z}}
\def\vol{\rm{vol}}
\newtheorem{lem}{Lemma}[section]
\newtheorem{prop}[lem]{Proposition}
\newtheorem{defi}[lem]{Definition}
\newtheorem{def/not}[lem]{Definition/Notations}
\newtheorem{thm}[lem]{Theorem}
\newtheorem{cor}[lem]{Corollary}
\newtheorem{rem}[lem]{Remark}
\newtheorem{exa}[lem]{Example}
\newtheorem*{ackn}{Acknowledgement}
\newcommand{\Real}{\mathbb{R}}
\newcommand{\Circle}{\mathbb{S}^{1}}
\newcommand{\HH}{\mathcal{H}}
\newcommand{\Cinf}{C^{\infty}}
\newcommand{\norm}[1]{\left\Vert#1\right\Vert}
\newcommand{\set}[1]{\left\{#1\right\}}
\begin{document}

\title[The  Riemannian space of K\"{a}hler metrics]{The metric completion of the Riemannian space of K\"{a}hler metrics}

\author{Vincent GUEDJ*}

 \date{\today \\ *The author is partially supported by the ANR project MACK}
 
 \address{Institut Universitaire de France \& Institut de Math\'ematiques de Toulouse, 31062 Toulouse cedex 09\\
France}
\email{vincent.guedj@math.univ-toulouse.fr}

\begin{abstract}
Let $X$ be a compact K\"ahler manifold and
$\a \in H^{1,1}(X,\R)$ a K\"ahler class.
We study the metric completion of the space 
$\HH_\a$ of K\"ahler metrics in $\a$, when endowed with the
Mabuchi $L^2$-metric $d$. 

Using recent ideas of Darvas, we  show that the metric completion $(\overline{\HH}_\a,d)$ of $(\HH_\a,d)$ 
is a CAT(0) space which can be identified
with  $\E^2(\a)$, a subset of the class  $\E^1(\a)$ of positive closed currents with finite energy.

We further prove, in the toric setting, that $\overline{\HH}_{\a,tor}=\E_{tor}^2(\a)$.
\end{abstract}

\maketitle

\tableofcontents
\clearpage


\section*{Introduction}
\label{sec:intro}
 
 Let $X$ be a compact K\"ahler manifold and $\a \in H^{1,1}(X,\R)$ a K\"ahler class.
The space $\HH_\a$ of K\"ahler metrics $\omega$ in $\a$ can be seen as an infinite dimensional riemannian manifold
whose tangent spaces $T_\omega \H_\a$ can all be identified with ${\mathcal C}^{\infty}(X,\R)$.
Mabuchi has introduced in \cite{Mab87} an $L^2$-metric on $\H_\a$, by setting
$$
\langle f, g \rangle_\omega:=\int_X f \, g \, \frac{\omega^n}{V_\a},
$$
where $n=\dim_\C X$ and $V_\a=\int_X \omega^n=\a^n$ denotes the volume of $\a$.

Mabuchi  studied the corresponding geometry of $\H_\a$, showing in particular that it
can formally be seen as a locally symmetric space of non positive curvature. Semmes \cite{Sem92} re-interpreted the geodesic equation as a complex homogeneous equation, while Donaldson \cite{Don99}
strongly motivated the search for smooth geodesics through its connection with the uniqueness of constant scalar curvature K\"ahler metrics.

\smallskip

In a series of remarkable works \cite{Chen00,CC02,CT08,Chen09,CS09} X.X.Chen
and his collaborators have studied the metric and geometric properties of the space $\H_\a$, showing in particular that it is a path metric space 
(a non trivial assertion in this infinite dimensional setting) of nonpositive curvature in the sense of Alexandrov. A key step from \cite{Chen00} has been to produce almost ${\mathcal C}^{1,1}$-geodesics
which turn out to minimize the intrinsic  distance $d$. 

It is a very natural problem to try and understand the metric completion of the metric space $(\H_\a,d)$
(see \cite{Cla13} for motivations and similar considerations on the space of all riemannian metrics).
This is the main goal of this expositary article.

\medskip

\noindent {\bf Main theorem.}
{\it
The metric completion $(\overline{\H}_\a,d)$ of $(\H_\a,d)$ is a Hadamard space which consists of finite energy currents, more precisely
$$
(\overline{\H}_\a, d)  \text{ is isometric to } \E^2(\a).
$$

Moreover weak (finite energy) geodesics are length minimizing.
}

\medskip

I had conjectured this result  in a preliminary draft of these lecture notes. It has been since then 
proved by Darvas in \cite{Dar14}. We have  thus expanded our notes so as to
include some of Darvas arguments.

\smallskip

Recall that a Hadamard space is a complete CAT(0) space, i.e. a complete geodesic space which has non positive curvature in the sense of Alexandrov (see \cite[Chapter II.1]{BH99}).

Finite energy classes of currents $\E^p(\a)$ were introduced in \cite{GZ07}, inspired by similar concepts developed by Cegrell in
domains of $\C^n$ \cite{Ceg98}. They have become an important tool in studying singular K\"ahler-Einstein metrics \cite{EGZ09,BEGZ10}.
We refer the reader to section \ref{sec:energy} for a precise definition.
We just stress here that when the complex dimension is $n=1$, the class 
$\E^1(\a)$ consists of non negative Radon measures whose potentials have square integrable gradients;
we show that the Mabuchi metric dominates the Sobolev norm in this case.

Weak geodesics are generalized solutions of the homogeneous complex Monge-Amp\`ere equation, when
the boundary data are no longer smooth points in $\H_\a$, but rather finite energy currents in $\overline{\H}_\a$. They have played  an important role in 
Berndtsson's recent generalization of Bando-Mabuchi's uniqueness result \cite{Bern13}.
They also play a crucial role in the variational approach to solving degenerate complex Monge-Amp\`ere equations in finite energy classes and its application to the K\"ahler-Einstein problem on singular varieties \cite{BBGZ,BBEGZ}.

\smallskip

 When $X_P$ is a toric manifold associated to a Delzant polytope $P \subset \R^n$, the Legendre transformation
allows to linearize the Monge-Amp\`ere operator.  We explain in section \ref{sec:toric}
that all notions (K\"ahler metrics, finite energy currents, geodesics, etc) have an elementary interpretation
on $P$, following previous observations by Guillemin, Guan, Donaldson, Berman-Berndtsson \cite{Gui94,Guan99,Don02,CDG03,BerBer13} (to name a few). Setting $\H_{tor}$ for the set of toric K\"ahler metrics in $\a$ and 
$\E^2_{tor}$ for the corresponding set of finite energy toric currents, we then prove:

\medskip
\noindent {\bf Toric theorem.}
{\it
  The metric completion of $(\H_{tor},d)$ is isometric to $(\E^2_{tor},d)$.
}
 \medskip

More precisely toric K\"ahler metrics correspond to certain "smooth" convex functions on the polytope $P$,
geodesics in $\H_{tor}$ correspond to straight lines on $P$,
 and the Mabuchi distance corresponds to the Lebesgue $L^2$-norm on the latter, while
finite energy toric currents in $\E^2$ correspond precisely to convex functions on $P$ that
are $L^2$ with respect to Lebesgue measure.
Even though the toric setting is very particular, it is a good test which naturally lead us to conjecture 
the corresponding non toric result.

\smallskip
 
There are several related questions that we do not address here. It would be for instance interesting to extend the quantization results of Phong-Sturm \cite{PS06}, Berndtsson \cite{Bern09}, Chen-Sun \cite{CS09} 
and Song-Zelditch \cite{SZ12}, Rubinstein-Zelditch \cite{RZ12} to $\overline{\H}_\a$. The reader will find further interesting problems by consulting the surveys \cite{G12,PSS12}.

\smallskip

The organization of the paper is as follows. {\it Section \ref{sec:kahler_metrics}}
is a recap on the Mabuchi metric and those results of Chen that we shall be using.
We give only few indications on the proofs and 
 refer the interested reader to \cite{G12} for a much broader overview of this exciting field.

We introduce in {\it Section \ref{sec:energy}} classes of finite energy currents and compare
their natural topologies with the one induced by the Mabuchi distance. 

We study weak geodesics in {\it Section \ref{sec:geod}}.
We start by explaining an important observation of Darvas that the Mabuchi distance decreases under the "min operation" and then prove the main theorem.
 
We finally provide a detailed analysis of the toric setting in {\it Section \ref{sec:toric}}.
 
\begin{ackn} 
These are expanded sets of  notes of a series of lectures given at KIAS in April 2013. The author thanks 
J.-M.Hwang and M.Paun for their invitation and the staff of KIAS for providing excellent conditions of work.

We thank T.Darvas  for communicating his interesting preprint \cite{Dar14}
and for useful comments on a preliminary draft of our notes.  

The metric space $\overline{\H}_\a$ has been considered by J.Streets in his  
study of the Calabi flow. We thank him for informing us about his work 
\cite{Str12,Str13}.
\end{ackn}

\section{The space of K\"{a}hler metrics}
\label{sec:kahler_metrics}

Let $(X,\omega)$ be a {compact} K\"ahler manifold of dimension $n$. 
It follows from the $\partial\overline{\partial}$-lemma that any
 other K\"{a}hler metric on $M$ {in the same cohomology class} as $\omega$ can be written as
\begin{equation*}
    \omega_{\f} = \omega + dd^c \f ,
\end{equation*}
where $d=\partial+\overline{\partial}$ and $d^c=\frac{1}{2i\pi} (\partial -\overline{\partial})$.
Let $\HH$ be the space of \emph{K\"{a}hler potentials}
\begin{equation*}
    \HH = \set{\f \in \Cinf(X) \: ; \omega_{\f} = \omega + dd^c  \f >0}.
\end{equation*}
This is a convex open subset of the Fr\'{e}chet vector space $\Cinf(X)$, thus itself a Fr\'{e}chet manifold, which is moreover parallelizable : $T\HH = \HH \times \Cinf(X)$. Each tangent space is identified with $\Cinf(X)$.

As two K\"{a}hler potentials define the same metric when (and only when) they differ by an additive constant,
we set
\begin{equation*}
    \HH_{\a} = \HH /\Real
\end{equation*}
where $\Real$ acts on $\HH$ by addition. The set $\HH_{\a}$ is therefore the {space of K\"{a}hler metrics on $X$ in the cohomology class $\a:=\{\omega\} \in H^{1,1}(X,\R)$}.

We briefly review in this section known facts about the riemannian structure of this space, as introduced by Mabuchi in \cite{Mab87} and further investigated by Semmes \cite{Sem92}, Donaldson \cite{Don99},
Chen \cite{Chen00,CC02}.  Introductory references for this material  are the lecture notes by Kolev \cite{Kol12}
and Boucksom \cite{Bou12}.

\subsection{The Riemannian structure}

\begin{defi} \cite{Mab87}
The \emph{Mabuchi metric}  is the $L^{2}$ Riemannian metric on $\HH$. It is defined by
\begin{equation*}
    <\psi_{1},\psi_{2}>_{\f} = \int_{X} \psi_{1}\psi_{2}\, \frac{(\omega+dd^c \f)^n}{V_\a}
\end{equation*}
where $\f \in \HH$, $\psi_{1},\psi_{2} \in \Cinf(X)$ 
and ${(\omega+dd^c \f)^n}/{V_\a}$ is the volume element, normalized so that
it is a probability measure. Here
$$
V_\a:=\a^n=\int_X \omega^n.
$$
\end{defi}

In the sequel we shall also use the notation $\omega_\f:=\omega+dd^c \f$ and 
$$
MA(\f):=\frac{\omega_\f^n}{V_\a}
$$

Geodesics between two points $\f_{0}$, $\f_{1}$ in $\HH$ are defined as the extremals of the
{Energy functional}
\begin{equation*}
 \f \mapsto  H(\f)=\frac{1}{2} \int_{0}^{1}\int_{X}(\dot{\f_{t}})^{2}\, MA({\f_{t}}) \, dt.
\end{equation*}
where $\f = \f_{t}$ is a path in $\HH$ joining $\f_{0}$ and $\f_{1}$. The geodesic equation is obtained by computing the Euler-Lagrange equation for this Energy functional (with fixed end points):

\begin{lem}
The geodesic equation is
\begin{equation}\label{equ:geodesic_equation}
    \ddot{\f} = \norm{\nabla \dot{\f}}_{\f}^{2}
\end{equation}
where the gradient is relative to the metric $\omega_{\f}$. This identity also writes
$$
\ddot{\f} \, MA(\f)=\frac{n}{V_\a} d \dot{\f} \wedge d^c \dot{\f}  \wedge \omega_\f^{n-1}.
$$
\end{lem}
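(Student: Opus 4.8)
The statement is the Euler--Lagrange equation of the energy functional $H$, so I would prove it by a direct first-variation computation. The plan is to fix a smooth two-parameter family $\f_{t,s}$ of K\"ahler potentials with $\f_{t,0}=\f_t$ the given path and fixed endpoints, $\f_{0,s}=\f_0$ and $\f_{1,s}=\f_1$ for all $s$, and to differentiate $s\mapsto H(\f_{\cdot,s})$ at $s=0$. Writing $v:=\partial_s\f|_{s=0}$ for the variation field (so $v$ vanishes at $t=0$ and $t=1$), and using $\partial_s\dot\f=\partial_t v$ together with the elementary identity $\partial_s\big(\omega_\f^n\big)=n\,dd^c v\wedge\omega_\f^{n-1}$, one gets
\[
\frac{d}{ds}\Big|_{s=0}H(\f_{\cdot,s})=\int_0^1\!\int_X \dot\f\,\partial_t v\;MA(\f)\,dt+\frac{n}{2V_\a}\int_0^1\!\int_X (\dot\f)^2\, dd^c v\wedge\omega_\f^{n-1}\,dt .
\]

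Next I would integrate by parts twice. In the first term I integrate by parts in the variable $t$; since $v$ vanishes at the endpoints there is no boundary contribution, and using $\partial_t MA(\f)=\tfrac{n}{V_\a}dd^c\dot\f\wedge\omega_\f^{n-1}$ this term becomes $-\int_0^1\!\int_X v\big(\ddot\f\,MA(\f)+\tfrac{n}{V_\a}\dot\f\,dd^c\dot\f\wedge\omega_\f^{n-1}\big)\,dt$. In the second term I integrate by parts on the closed manifold $X$, using that $dd^c$ is formally self-adjoint (Stokes, no boundary) together with the Leibniz rule $dd^c(u^2)=2u\,dd^c u+2\,du\wedge d^c u$; this turns it into $\tfrac{n}{V_\a}\int_0^1\!\int_X v\big(\dot\f\,dd^c\dot\f+d\dot\f\wedge d^c\dot\f\big)\wedge\omega_\f^{n-1}\,dt$.

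Adding the two contributions, the two terms proportional to $v\,\dot\f\,dd^c\dot\f\wedge\omega_\f^{n-1}$ cancel — this cancellation is the only thing one really has to watch, and it is what makes the final answer clean — leaving
\[
\frac{d}{ds}\Big|_{s=0}H=\int_0^1\!\int_X v\left(\frac{n}{V_\a}\,d\dot\f\wedge d^c\dot\f\wedge\omega_\f^{n-1}-\ddot\f\,MA(\f)\right)dt .
\]
Requiring this to vanish for every admissible $v$ gives $\ddot\f\,MA(\f)=\tfrac{n}{V_\a}d\dot\f\wedge d^c\dot\f\wedge\omega_\f^{n-1}$, which is the second displayed form of the geodesic equation. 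Dividing by the positive measure $MA(\f)=\omega_\f^n/V_\a$ and invoking the pointwise linear-algebra identity $n\,du\wedge d^c u\wedge\omega_\f^{n-1}=|\nabla u|_{\omega_\f}^2\,\omega_\f^n$ (with $u=\dot\f$) yields $\ddot\f=\norm{\nabla\dot\f}_\f^2$. I do not expect a genuine obstacle here: since everything is smooth the only care needed is bookkeeping of the normalising constant in $d^c$ and the signs in the two integrations by parts, plus recording that the $\dot\f\,dd^c\dot\f$ contributions from the two parts enter with opposite signs and annihilate each other.
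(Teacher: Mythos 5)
Your proof is correct and follows essentially the same route as the paper: the same first variation of $H$, the same two integrations by parts (in $t$ using $\partial_t MA(\f)=\tfrac{n}{V_\a}dd^c\dot\f\wedge\omega_\f^{n-1}$, and on $X$ via Stokes), and the same cancellation of the $\dot\f\,dd^c\dot\f$ terms. The only addition is that you spell out the passage from the measure form to $\ddot\f=\norm{\nabla\dot\f}_\f^2$ via the pointwise identity $n\,du\wedge d^c u\wedge\omega_\f^{n-1}=|\nabla u|^2_{\omega_\f}\,\omega_\f^n$, which the paper leaves implicit.
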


\begin{proof}
 We need to compute the Euler-Lagrange equation of the Energy functional $H$.
Let $(\phi_{s,t})$ be a variation of $(\f_t)$ with fixed end points,
$$
\phi_{0,t} \equiv \f_t
\; \text{ and } \;
\phi_{s,0} \equiv \f_0, \;
\phi_{s,1} \equiv \f_1.
$$
Set $\p_t:=\frac{\partial \phi}{\partial s}_{|s=0}$ and observe that $\p_0 \equiv \p_1 \equiv 0$. Thus
$$
\phi_{s,t}=\f_t+s\p_t+o(s)
\; \text{ and } \; 
\frac{\partial \phi_{s,t}}{\partial t}=\dot{\f}_t+s\dot{\p}_t+o(s).
$$

A direct computation yields
\begin{eqnarray*}
\lefteqn{
H(\phi_{s,t}) =\frac{1}{2} \int_{0}^{1}\int_{X}(\dot{\phi_{s,t}})^{2}\, MA({\phi_{s,t}}) \, dt} \\
&=& H(\f_t)+s \int_0^1 \int_X \dot{\f}_t \dot{\p}_t MA(\f_t) dt
+\frac{ns}{2V} \int_0^1 \int_X (\dot{\f}_t)^2 dd^c {\p}_t \wedge \omega^{n-1}_{\f_t} dt,
\end{eqnarray*}
noting that 
$$
MA({\phi_{s,t}}) =MA(\f_t)+\frac{ns}{V} dd^c \p_t \wedge \omega^{n-1}_{\f_t}.
$$
Integrating by parts yields, since $\p_0 \equiv \p_1 \equiv 0$,
$$
\int_0^1 \int_X \dot{\f}_t \dot{\p}_t MA(\f_t) dt=-\int_0^1 \int_X {\p}_t 
\left\{ \ddot{\f}_t  MA(\f_t) +\frac{n}{V} \dot{\f}_t dd^c \dot{\f}_t \wedge \omega_{\f_t}^{n-1} \right\} dt
$$
while
$$
\int_0^1 \int_X (\dot{\f}_t)^2 dd^c {\p}_t \wedge \omega^{n-1}_{\f_t} dt=
2\int_0^1 \int_X \p_t \left\{ d \dot{\f}_t \wedge d^c \dot{\f}_t +\dot{\f}_t \wedge dd^c \dot{\f}_t \right\}
\wedge \omega_{\f_t}^{n-1} dt,
$$
hence
$$
H(\phi_{s,t}) =H(\f_t)+s \int_0^1 \int_X \p_t \left\{ -\ddot{\f}_t  MA(\f_t) 
+\frac{n d \dot{\f}_t \wedge d^c \dot{\f}_t \wedge \omega_{\f_t}^{n-1}}{V} \right\} dt+o(s).
$$
Therefore $(\f_t)$ is a critical point of $H$ if and only if
$$
\ddot{\f} \, MA(\f)=\frac{n}{V_\a} d \dot{\f} \wedge d^c \dot{\f}  \wedge \omega_\f^{n-1}.
$$
\end{proof}

As for Riemannian manifolds of finite dimension, one can find the local expression of the Levi-Civita
connection  by  {polarizing}  the geodesic equation.  We define the covariant derivative of the vector field $\psi_{t}$ along the path $\f_{t}$ in $\HH$ by the formula
\begin{equation*}
    \frac{D\psi}{Dt} = \partial_{t}\psi -  <\nabla \psi, \nabla \dot{\f}>_{\f} .
\end{equation*}
This covariant derivative is symmetric by its very definition, i.e.
\begin{equation*}
    \frac{D}{Ds}\partial_{t}\f = \frac{D}{Dt}\partial_{s}\f,
\end{equation*}
for every family $\f = \f_{s,t}$ and it can be checked directly that it preserves the metric,
\begin{equation*}
    \frac{d}{dt}<\psi_{1},\psi_{2}>_{\f} = <\frac{D\psi_{1}}{Dt},\psi_{2})>_{\f}  + <\psi_{1}, \frac{D\psi_{2}}{Dt}>_{\f}.
\end{equation*}

\subsection{Dirichlet problem for the complex Monge-Amp\`{e}re equation}

We are interested in the  {boundary value problem} for the geodesic equation: 
given $\f_{0},\f_{1}$ two distinct points in $\HH$, 
can one find a path $(\f(t))_{0 \leq t \leq 1}$  in $\HH$ which is a solution of  ~\eqref{equ:geodesic_equation} with end points $\f(0) = \f_{0}$ and $\f(1) = \f_{1}$ ?

It has been observed by Semmes \cite{Sem92} that this can be re-formulated as a {homogeneous complex Monge-Amp\`{e}re equation}. 

\smallskip

For each path $(\f_{t})_{t \in [0,1]}$ in $\HH$, we set
\begin{equation*}
    \f(x,t,s) = \f_{t}(x), \qquad x \in X, \quad e^{t+is} \in A = [1,e] \times \Circle;
\end{equation*}
i.e. we associate to each path $(\f_{t})$ a 
function  $\f$ on the complex manifold $X\times A$, which is radial in the annulus coordinate:
we consider the annulus $A$ as a Riemann surface with boundary 
and use the complex coordinate $z = e^{t+is}$ to parametrize the annulus $A$. Set $\omega(x,z):=\omega(x)$.

\begin{prop}\cite{Sem92} \label{prop:Semmes}
The path $\f_{t}$ is a geodesic in $\HH$ if and only if the associated radial function $\f$ on $X\times A$ is a solution of the homogenous complex Monge-Amp\`{e}re equation 
$$
(\omega+dd^c_{x,z} \f)^{n+1} = 0.
$$
\end{prop}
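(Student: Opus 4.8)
The plan is to compute $(\omega+dd^c_{x,z}\f)^{n+1}$ explicitly for a radial function and show that its vanishing is equivalent to the geodesic equation \eqref{equ:geodesic_equation}. First I would set up coordinates: write $z=e^{t+is}$ on the annulus $A$, so that $t=\log|z|$ and the radial function $\f(x,z)=\f_t(x)$ depends on $z$ only through $t$. The key computational input is the expression of $dd^c_{x,z}\f$ in terms of the $x$-variable Hessian and the $t$-derivatives. Since $\f$ is independent of $s$, one finds that the "mixed" and "pure annulus" parts of $dd^c_{x,z}\f$ assemble, after the change of variables $z\mapsto t$, into $dd^c_x \f_t + \tfrac{1}{4}\ddot{\f_t}\,\frac{i\,dz\wedge d\bar z}{|z|^2} + (\text{cross terms involving }d\dot\f_t)$; more precisely $\omega + dd^c_{x,z}\f = \omega_{\f_t} + (\text{a }(1,1)\text{-form on }X\times A)$ whose $dz$-components encode $\dot\f_t$ and $\ddot\f_t$.

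Next I would expand the $(n+1)$-st exterior power. Because $\omega(x,z)=\omega(x)$ has no $dz$ or $d\bar z$ component, any term in $(\omega+dd^c_{x,z}\f)^{n+1}$ must pick up exactly one $dz$ and one $d\bar z$ factor (there is only one annulus direction), and the remaining $n$ factors must be the "horizontal" $(1,1)$-form on $X$, which is exactly $\omega_{\f_t}$. Collecting these contributions, $(\omega+dd^c_{x,z}\f)^{n+1}$ equals, up to a positive constant and the volume form $\frac{i\,dz\wedge d\bar z}{|z|^2}$,
\begin{equation*}
\left( \ddot{\f_t}\,\omega_{\f_t}^n \;-\; n\, d\dot\f_t \wedge d^c\dot\f_t \wedge \omega_{\f_t}^{n-1} \right),
\end{equation*}
where the first term comes from the $\ddot\f_t$ piece wedged with $\omega_{\f_t}^n$ and the second, with its minus sign, from two cross terms each contributing $d\dot\f_t$ or $d^c\dot\f_t$ wedged with $\omega_{\f_t}^{n-1}$. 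Setting this to zero and dividing by $V_\a$ gives precisely the second form of the geodesic equation in \eqref{equ:geodesic_equation}, namely $\ddot\f\, MA(\f)=\tfrac{n}{V_\a} d\dot\f\wedge d^c\dot\f\wedge\omega_\f^{n-1}$. Conversely, if this holds for every $t$, the $(n+1)$-form vanishes identically.

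I expect the main obstacle to be the bookkeeping of the cross terms: correctly identifying which wedge products of the mixed ($dx\wedge d\bar z$ type) components of $dd^c_{x,z}\f$ survive in the top power, and tracking the combinatorial factor $n$ (from choosing which of the $n+1$ slots carries the annulus piece versus which two carry the mixed pieces) together with the sign produced by reordering $dz$, $d\bar z$ and the horizontal forms. A careful way to organize this is to work at a point where $\omega_{\f_t}$ is diagonalized in suitable local holomorphic coordinates on $X$ and write everything in terms of the eigenvalues; then the computation reduces to an explicit finite sum. One should also check that the change of variables $z=e^{t+is}$ introduces only a harmless positive factor (a power of $|z|^{-2}$), so that the sign and vanishing are unaffected, and note that the statement is local in $t$ so radial symmetry in $s$ is used only to kill $\partial_s$-derivatives throughout.
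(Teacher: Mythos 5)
Your proposal is correct and follows essentially the same route as the paper: the paper writes $\omega+dd^c_{x,z}\f=\omega_{\f_t}+R$ with $R=d_xd^c_z\f+d_zd^c_x\f+d_zd^c_z\f$, notes $R^3=0$ (your observation that only one $dz$ and one $d\bar z$ can survive in the top power), and expands $(\omega_{\f_t}+R)^{n+1}=(n+1)\omega_{\f_t}^n\wedge R+\tfrac{n(n+1)}{2}\omega_{\f_t}^{n-1}\wedge R^2$ to recover exactly the two contributions $\ddot\f_t\,\omega_{\f_t}^n$ and $-n\,d\dot\f_t\wedge d^c\dot\f_t\wedge\omega_{\f_t}^{n-1}$ that you identify. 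Your remarks about the harmless positive Jacobian factor from $z=e^{t+is}$ and the combinatorial coefficient $n$ are consistent with the paper's computation.
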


\noindent We have stressed the fact that we take here derivatives in
all variables $x,z$.

\begin{proof}
Write $d_{x,z}\f=d_x\f+d_z \f$ and observe that
$d_z \f=\dot{\f}_t (dz+d\overline{z})$ (by invariance), 
$(\omega+d_xd_x^c \f)^{n+1}=0$ and
$$
\omega+dd^c_{x,z} \f=\omega_{\f_t}+R,
$$
where 
$
R=d_x d^c_z \f+d_z d^c_x \f+d_zd^c_z \f
\text{ is such that }
R^3=0.
$
This yields
$$
(\omega+dd^c_{x,z} \f)^{n+1}=(n+1) \omega_{\f_t}^n \wedge R+
\frac{n(n+1)}{2} \omega_{\f_t}^{n-1} \wedge R^2.
$$
Now $d_zd^c_z \f=\frac{i}{\pi} \ddot{\f}_t dz \wedge d\z$ and
$$
(d_x d^c_z \f+d_z d^c_x \f)^2=-2 d_x \dot{\f}_t \wedge d_x^c \dot{\f}_t 
\wedge \frac{i}{\pi} \ddot{\f}_t dz \wedge d\z,
$$
thus
$$
(\omega+dd^c_{x,z} \f)^{n+1}=(n+1) \omega_{\f_t}^n \wedge \frac{i}{\pi} \ddot{\f}_t dz \wedge d\z
\left\{ \ddot{\f}_t-\frac{n d \dot{\f} \wedge d^c \dot{\f}  \wedge \omega_\f^{n-1}}{\omega_\f^{n}} 
\right\},
$$
and the result follows.
\end{proof}

It is well known that homogeneous complex Monge-Amp\`ere equations in bounded domains of 
$\C^n$ admit a unique solution which is at most ${\mathcal C}^{1,1}$-smooth.
Here is a striking example of Gamelin and Sibony:

\begin{exa}
Let $\B \Subset \C^2$ be the  open unit ball. Observe that
 $$
 \varphi_0(z,w) := (\vert z\vert^2 - 1 \slash 2)^2 = (\vert w\vert^2 - 1 \slash 2)^2
 $$
is real analytic on $\partial \B$. Set $ \psi (z) := \left(\max \{0, \vert z\vert^2 - 1 \slash 2\}\right)^2$ and
for $(z,w) \in \B$,
 $$
\f (z,w) = \max \{\psi (z), \psi (w)\}.
 $$

 The reader will check that $\f$  is ${\mathcal C}^{1,1}$-smooth but not ${\mathcal C}^2$-smooth,
$\f_{| \partial \B}=\f_0$, and $\f$ is the unique solution of the homogeneous 
complex Monge-Amp\`ere equation
$MA(\f)=0$ in $\B$, with $\f_0$ as boundary values.
\end{exa}

A fundamental result of Chen \cite{Chen00} shows the existence (and uniqueness) of
almost ${\mathcal C}^{1,1}$-solutions to the geodesic equation\footnote{These have bounded Laplacian, hence they are in particular ${\mathcal C}^{1,\a}$
for all $0<\a<1$.}. 

It was expected that Chen's regularity result was essentially optimal 
(although Chen and Tian have proposed in \cite{CT08} some improvements). This has been 
confirmed by Lempert, Vivas and Darvas in \cite{LV11,DL12}. 
We summarize this discussion in the following:

\begin{thm} [Chen, Lempert-Vivas]
Given $\f_0,\f_1 \in \H$ there exists $C>0$ and a continuous geodesic path $(\f_t)_{0 \leq t \leq 1}$
in $\overline{\H}$ joining $\f_0$ to $\f_1$ such that 
$$
\left| \Delta_{x,z} \f \right| \leq C.
$$
Moreover smooth geodesics do not exist in general and this estimate is optimal:
there exists geodesics such that $\Delta_{x,z} \f $ is bounded but not continuous.
\end{thm}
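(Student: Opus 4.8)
The plan is to reduce the boundary value problem for \eqref{equ:geodesic_equation} to a Dirichlet problem for a homogeneous complex Monge--Amp\`ere equation via Proposition~\ref{prop:Semmes}, to solve it by elliptic regularization, and then to recall the counterexamples that force optimality. By Proposition~\ref{prop:Semmes}, a geodesic joining $\f_0$ to $\f_1$ is the same thing as a solution $\Phi$ of $(\omega+dd^c_{x,z}\Phi)^{n+1}=0$ on the compact K\"ahler manifold with boundary $M:=X\times A$, with $\Phi=\f_0$ on $X\times\{|z|=1\}$ and $\Phi=\f_1$ on $X\times\{|z|=e\}$. First I would replace the degenerate right-hand side by $\e\,\mu$ for a fixed smooth volume form $\mu$ on $M$ and solve $(\omega+dd^c_{x,z}\Phi_\e)^{n+1}=\e\,\mu$ with the same boundary data, $\e\in(0,1]$. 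Solvability of this non-degenerate Dirichlet problem is classical (Caffarelli--Kohn--Nirenberg--Spruck, B.~Guan) once a strict subsolution is exhibited, and $\underline\Phi(x,t):=(1-t)\f_0(x)+t\f_1(x)+At(t-1)$, with $t=\log|z|$, is one for $A$ large: the term $At(t-1)$ contributes a strictly positive multiple of $\tfrac{i}{\pi}\,dz\wedge d\z$ while the rest remains $\omega$-plurisubharmonic in $(x,z)$.

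The crux is then a priori control of $\Phi_\e$ in $C^{1,1}(M)$, uniformly in $\e$. The $C^0$ bound comes from the maximum principle by comparing $\Phi_\e$ with $\underline\Phi$ and with an affine supersolution; the gradient bound $\|d\Phi_\e\|\le C$ combines a barrier estimate at $\partial M$ with an interior estimate. The decisive point is the uniform bound $\Delta_{x,z}\Phi_\e\le C$: the interior part is a Yau/Aubin--type computation, while the boundary second-order estimate is precisely Chen's technical achievement in \cite{Chen00}, the $\Circle$-invariance in the annulus variable being used to control the mixed tangential--normal derivatives along $X\times\partial A$. Since $\omega+dd^c_{x,z}\Phi_\e\ge0$, an upper bound on its trace is a two-sided bound on $dd^c_{x,z}\Phi_\e$, so $\|\Delta_{x,z}\Phi_\e\|_{L^\infty}\le C$. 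Passing to a weak-$*$ limit as $\e\to0$ gives $\Phi\in C^{1,\alpha}(M)$ for all $\alpha<1$, with $|\Delta_{x,z}\Phi|\le C$, attaining the prescribed boundary values and solving $(\omega+dd^c_{x,z}\Phi)^{n+1}=0$ in the pluripotential sense; unwrapping the annulus coordinate yields a continuous path $(\f_t)$ with $\omega_{\f_t}\ge0$ on each slice, i.e. a finite energy geodesic in $\overline{\H}$ joining $\f_0$ to $\f_1$ (uniqueness follows from the comparison principle for the Monge--Amp\`ere operator).

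For optimality one produces counterexamples rather than estimates. A Gamelin--Sibony--type construction, glueing two affine-in-$t$ families by a maximum exactly as in the example above, gives a weak geodesic whose complex Hessian in $(x,z)$ is bounded but jumps across the glueing locus, so $\Delta_{x,z}\Phi$ is bounded and not continuous. For the non-existence of smooth geodesics I would invoke Lempert--Vivas \cite{LV11} (and Darvas--Lempert \cite{DL12}): they exhibit $\f_0,\f_1\in\H$ for which no $C^2$ solution exists, the mechanism being that a smooth $\Phi$ with $\omega+dd^c_{x,z}\Phi$ of constant rank $n$ would carry a smooth Monge--Amp\`ere foliation by holomorphic discs filling $X\times\partial A$, while the linearized disc-filling equation---a $\bar\partial$-problem along each leaf---can be obstructed by a suitable choice of boundary data.

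The genuinely hard input is the uniform boundary second-order estimate in the middle step, that is, Chen's theorem; the sharp counterexamples in the last step are the other non-routine ingredient, and both are quoted here from the literature rather than reproved.
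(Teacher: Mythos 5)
Your proposal is correct and follows essentially the same route as the paper: reduction to the homogeneous complex Monge--Amp\`ere Dirichlet problem on $X\times A$ via Proposition~\ref{prop:Semmes}, elliptic regularization $(\omega+dd^c\f_\e)^{n+1}=\e\,dV$ with uniform a priori Laplacian estimates quoted from Chen (the paper likewise defers to \cite{Chen00,Bou12} for this step), and the Lempert--Vivas/Darvas--Lempert counterexamples for optimality. The only cosmetic difference is that the paper motivates the non-existence of smooth geodesics through the Monge--Amp\`ere foliation and the fixed points of the involution on an elliptic curve, whereas you phrase the same obstruction in terms of the disc-filling $\bar\partial$-problem; both are faithful summaries of \cite{LV11}.
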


The proof of Chen's result goes by approximating the degenerate homogeneous complex Monge-Amp\`ere equation by smoother approximate geodesics
$$
(\omega+dd^c \f_\e)^{n+1}=\e dV,
$$
and establish a priori estimates as $\e>0$ decreases to zero.
We refer the reader to \cite{Bou12} for a neat presentation of this result. 

It follows from Frobenius theorem that if smooth geodesics existed, one could find a holomorphic foliation
of $X \times A$ by complex curves (the Monge-Amp\`ere foliation)
along which $\f$ is harmonic.
 When $X=\C/ \Z[ \tau]$ is an elliptic curve,
the holomorphic involution $x \mapsto f(x)=-x$ has four fixed points $p_i$
and one can then check that $\{p_i\} \times \D$ are necessarily leaves of the Monge-Amp\`ere foliation.
(assuming $\f_0,\f_1$ are $f-$invariant). Analyzing the regularity of $\f$ near such leaves allows
Lempert and Vivas to derive a contradiction in \cite{LV11}.
We refer the reader to \cite{DL12} for further improvements.

\smallskip

We will consider in the sequel generalized geodesics with lower regularity, which 
nevertheless turn out to be quite useful.

It may be also interesting to consider (weak) {\it subgeodesics}: these are paths $(\f_t(x))$ of functions
in $\H$ (or in larger  classes of $\omega$-psh functions) such that the associated 
invariant functions are $\omega$-psh functions  on $X \times A$. It follows from the computation
made in the proof of Proposition \ref{prop:Semmes} that this is equivalent to
$$
\ddot{\f}_t MA(\f_t) \geq \frac{n}{V} d \dot{\f} \wedge d^c \dot{\f}  \wedge \omega_\f^{n-1}. 
$$

\subsection{The Aubin-Mabuchi functional}

Each tangent space $T_{\f}\HH$ admits the following orthogonal decomposition
\begin{equation*}
    T_{\f}\HH = \set{\psi \in \Cinf(X) ; \: \beta_{\f}(\psi) = 0}\oplus \Real ,
\end{equation*}
where $\beta=MA$ is the 1-form defined on $\HH$ by
\begin{equation*}
    \beta_{\f}(\psi) = \int_{X}\psi \,MA(\f).
\end{equation*}
The following is a classical observation (see \cite[pp245-246]{Kol12}):

\begin{lem}
The 1-form $\beta$ is closed. 
\end{lem}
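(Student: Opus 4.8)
The plan is to show that $\beta$ is closed by computing its exterior derivative and checking that the relevant expression is symmetric in the two tangent directions. Since $\HH$ is an open convex subset of the Fréchet space $\Cinf(X)$, it is contractible, so closedness is equivalent to the existence of a primitive; but the cleanest route is simply to verify $d\beta = 0$ directly using the standard formula for the exterior derivative of a $1$-form in terms of constant vector fields.

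First I would recall that for constant vector fields $\psi_1, \psi_2 \in \Cinf(X) = T_\f\HH$ (independent of $\f$), the Lie bracket $[\psi_1,\psi_2]$ vanishes, so
\begin{equation*}
    d\beta(\psi_1,\psi_2) = D_{\psi_1}\big(\beta_\f(\psi_2)\big) - D_{\psi_2}\big(\beta_\f(\psi_1)\big),
\end{equation*}
where $D_{\psi}$ denotes the directional derivative at $\f$ in the direction $\psi$. So the task reduces to computing $\frac{d}{ds}\big|_{s=0} \int_X \psi_2 \, MA(\f + s\psi_1)$ and showing it is symmetric under swapping $\psi_1 \leftrightarrow \psi_2$.

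The key computation is the variation of the Monge-Ampère measure: as already used in the proof of the geodesic equation,
\begin{equation*}
    \frac{d}{ds}\Big|_{s=0} MA(\f + s\psi_1) = \frac{n}{V_\a}\, dd^c \psi_1 \wedge \omega_\f^{n-1}.
\end{equation*}
Hence $D_{\psi_1}\big(\beta_\f(\psi_2)\big) = \frac{n}{V_\a}\int_X \psi_2 \, dd^c \psi_1 \wedge \omega_\f^{n-1}$. Integrating by parts twice on the compact manifold $X$ (no boundary terms) moves the $dd^c$ from $\psi_1$ onto $\psi_2$, giving $\frac{n}{V_\a}\int_X \psi_1 \, dd^c \psi_2 \wedge \omega_\f^{n-1} = D_{\psi_2}\big(\beta_\f(\psi_1)\big)$. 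Therefore $d\beta(\psi_1,\psi_2) = 0$, and since $\psi_1,\psi_2$ were arbitrary tangent vectors, $\beta$ is closed.

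The main (and essentially only) obstacle is purely bookkeeping: justifying differentiation under the integral sign and the vanishing of boundary terms in the integration by parts, which are both routine since everything is smooth on the compact manifold $X$ and $MA(\f+s\psi_1)$ depends polynomially on $s$. A secondary point worth a remark is that, because $\HH$ is contractible, closedness of $\beta$ immediately yields a primitive — the Aubin-Mabuchi (Monge-Ampère) energy functional — which is presumably the point of recording this lemma here.
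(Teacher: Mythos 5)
Your proof is correct and is the standard argument: the paper itself gives no proof, deferring to Kolev (pp.~245--246), where exactly this computation appears — differentiate $\beta_\f(\psi_2)$ in the direction $\psi_1$ using $\frac{d}{ds}\big|_{s=0}MA(\f+s\psi_1)=\frac{n}{V_\a}dd^c\psi_1\wedge\omega_\f^{n-1}$, then use Stokes on the compact manifold (with $\omega_\f^{n-1}$ closed) to see the result is symmetric in $\psi_1,\psi_2$, whence $d\beta=0$ for constant (hence commuting) vector fields. Your closing remark is also on point: contractibility of the convex set $\HH$ then produces the primitive $E$, which is precisely how the paper uses the lemma.
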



Therefore, there exists a unique function $E$ defined on the convex open set $\HH$, such that $\beta = dE$ and $E(0) = 0$. It is often called the \emph{Aubin-Mabuchi functional} and can be expressed (after integration along affine paths) by
\begin{equation}\label{equ:Aubin_Mabuchi}
    E(\f) = \frac{1}{(n+1)V_\a} \sum_{j=0}^{n} \int_{X}\f \,  
(\omega+dd^c  \f)^{j}  \wedge   \omega^{n-j}   .
\end{equation}

\begin{lem}  \label{lem:AubinMabuchi1}
The Aubin-Mabuchi functional $E$ is concave, non-decreasing and 
satisfies the cocycle condition
$$
E(\f)-E(\p)=\frac{1}{(n+1)V_\a} \sum_{j=0}^{n} \int_{X} (\f-\p) \,  
(\omega+dd^c  \f)^{j}  \wedge   (\omega+dd^c \p)^{n-j} 
$$
It is {affine} along geodesics in $\HH$.
\end{lem}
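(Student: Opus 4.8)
The statement collects several standard facts about the Aubin--Mabuchi functional $E$. I would prove them in the following order, using the explicit formula \eqref{equ:Aubin_Mabuchi} and the cocycle identity as the main computational engine.

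\textit{Step 1: the cocycle condition.} First I would establish
$$
E(\f)-E(\p)=\frac{1}{(n+1)V_\a} \sum_{j=0}^{n} \int_{X} (\f-\p) \,
\omega_\f^{j}\wedge\omega_\p^{n-j}.
$$
The cleanest route is to differentiate along the affine path $\f_t=(1-t)\p+t\f$. Since $\beta=dE$ and $\dot\f_t=\f-\p$, we have $\frac{d}{dt}E(\f_t)=\int_X(\f-\p)\,MA(\f_t)$. Then one expands $\omega_{\f_t}^n=((1-t)\omega_\p+t\omega_\f)^n$ by the binomial formula using that $\omega_\p,\omega_\f$ are closed (so the wedge product behaves like a commutative product of the cohomologous forms $\omega_\p,\omega_\f$), and integrates $\int_0^1 t^j(1-t)^{n-j}\,dt=\frac{j!(n-j)!}{(n+1)!}$; the combinatorial factors $\binom{n}{j}$ combine with this Beta integral to give exactly $\frac{1}{n+1}$ in each term. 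An integration by parts (moving $dd^c$ around, using Stokes on the compact $X$) is needed to symmetrize the expression into the stated form; alternatively one checks directly that \eqref{equ:Aubin_Mabuchi} evaluated at $\f$ minus at $\p$ telescopes into the displayed sum after repeated integration by parts.

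\textit{Step 2: monotonicity and concavity.} Non-decreasing: if $\f\geq\p$ then every term $\int_X(\f-\p)\,\omega_\f^j\wedge\omega_\p^{n-j}\geq 0$ since the integrand is a nonnegative function times a positive measure, so $E(\f)\geq E(\p)$ by Step 1. Concavity: it suffices to show $t\mapsto E(\f_t)$ is concave along affine segments $\f_t=(1-t)\f_0+t\f_1$. We already have $\frac{d}{dt}E(\f_t)=\int_X(\f_1-\f_0)\,MA(\f_t)$; differentiating once more gives $\frac{d^2}{dt^2}E(\f_t)=\frac{n}{V_\a}\int_X(\f_1-\f_0)\,dd^c(\f_1-\f_0)\wedge\omega_{\f_t}^{n-1}=-\frac{n}{V_\a}\int_X d(\f_1-\f_0)\wedge d^c(\f_1-\f_0)\wedge\omega_{\f_t}^{n-1}\leq 0$, the last inequality because $\omega_{\f_t}>0$ makes the form $d u\wedge d^c u\wedge\omega_{\f_t}^{n-1}$ nonnegative for any real $u$. (This is the same positivity used implicitly in the geodesic lemma.)

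\textit{Step 3: affine along geodesics.} Let $(\f_t)$ be a geodesic in $\HH$. Again $\frac{d}{dt}E(\f_t)=\int_X\dot\f_t\,MA(\f_t)$, and differentiating,
$$
\frac{d^2}{dt^2}E(\f_t)=\int_X\ddot\f_t\,MA(\f_t)+\frac{n}{V_\a}\int_X\dot\f_t\,dd^c\dot\f_t\wedge\omega_{\f_t}^{n-1}.
$$
By the geodesic equation of Lemma~\ref{equ:geodesic_equation}, $\ddot\f_t\,MA(\f_t)=\frac{n}{V_\a}d\dot\f_t\wedge d^c\dot\f_t\wedge\omega_{\f_t}^{n-1}$, while integrating the second term by parts gives $-\frac{n}{V_\a}\int_X d\dot\f_t\wedge d^c\dot\f_t\wedge\omega_{\f_t}^{n-1}$. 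The two cancel, so $\frac{d^2}{dt^2}E(\f_t)=0$, i.e. $E$ is affine along the geodesic.

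\textit{Main obstacle.} The only genuinely delicate point is Step 1: getting the combinatorics and the integrations by parts to land exactly on the symmetric cocycle formula without sign or index errors, and making sure each integration by parts is legitimate (it is, since all potentials are smooth and $X$ is compact without boundary). Everything else is a direct consequence of the first-variation formula $dE=\beta$ together with the positivity of $\omega_{\f}$. I would present Step 1 carefully and then dispatch Steps 2 and 3 quickly as corollaries of the two variational identities above.
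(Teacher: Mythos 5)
Your proposal is correct and follows essentially the same route as the paper: the cocycle condition by differentiating $E$ along the affine segment $t\f+(1-t)\p$, monotonicity from positivity of $dE=\MA$, concavity from the sign of $d\dot\f\wedge d^c\dot\f\wedge\omega_{\f_t}^{n-1}$, and affineness along geodesics by the identical second-derivative cancellation; you merely supply the details the paper only sketches. One small remark: in Step 1 no integration by parts is needed at all --- the binomial expansion of $\omega_{\f_t}^n$ combined with the Beta integral already lands exactly on the symmetric cocycle formula.
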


We sketch the proof for the reader's convenience.

\begin{proof}
The monotonicity property follows from the definition since the first derivative of $E$
is $dE=\beta=MA \geq 0$, a probability measure.
The concavity follows from a direct computation (see e.g. \cite{BEGZ10}), while the 
cocycle condition follows by differentiating $E(t\f+(1-t)\p)$.

The behavior of $E$ along geodesics is also easily understood:
\begin{eqnarray*}
    \frac{d^{2}}{dt^{2}} E(\f_{t}) &= & \int_{X} \ddot{\f}\,MA(\f) 
+\frac{n}{V_\a} \int_{X} \dot{\f} \, dd^c \dot{\f} \wedge \omega_\f^{n-1}  \\
&=&   \int_{X} \left\{ \ddot{\f}\,MA(\f) -\frac{n}{V_\a} d \dot{\f}  \wedge d^c \dot{\f} \wedge \omega_\f^{n-1}  \right\}=0
\end{eqnarray*}
when $\f_{t}$ is a geodesic.
\end{proof}

Since  each  path $\f_{t} = \f + t$ in $\HH$ is a geodesic, 
we obtain in particular 
\begin{equation}\label{equ:affinity}
    E(\f + t) =   E(\f)+t.
\end{equation}
Given $\f\in \HH$ there exists a unique  $c\in\Real$ such that $E(\f + c) = 0$. The restriction of the Mabuchi metric to the fiber $E^{-1}(0)$ induces a Riemannian structure on the quotient space 
$\HH_{\a} = \HH / \Real$ and allows to decompose
\begin{equation*}
    \HH = \HH_{\a} \times \Real
\end{equation*}
as a product of Riemannian manifolds.

\subsection{$\HH$ as a symmetric space}

 Consider a 2-parameters family $\f(s,t) \in \HH$ and a vector field $\psi(s,t)\in \Cinf(X)$ defined along $\f$. The curvature tensor of the Mabuchi metric on $\HH$  is defined  by
\begin{equation*}
    R_{\f}(\f_{s},\f_{t})\psi = \left(D_{s}D_{t} - D_{t}D_{s}\right)\psi,
\end{equation*}
where $\f_{s},\f_{t}$ denote the $s$ and $t$ derivatives of $\f$ and
\begin{equation*}
    D_{t} \psi = \psi_{t} + \Gamma_{\f}(\f_{t}, \psi) = \psi_{t} - <\nabla \psi , \nabla \f_{t}>_{\f}
\end{equation*}
is the covariant derivative of $\psi$.

\begin{prop}
The curvature tensor on $\HH$ can be expressed as
\begin{equation*}
    R_{\f}(\f_{s},\f_{t})\psi = - \set{\set{\f_{s},\f_{t}},\psi}
\end{equation*}
where $\set{\f_{1}, \f_{2}}$ is the \emph{Poisson bracket} associated with the symplectic structure $\omega_{\f}$. Furthermore, the covariant derivative $D_{r}R(\f_{s},\f_{t})$ of the curvature tensor $R$ vanishes.
\end{prop}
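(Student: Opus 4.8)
I would argue as in the classical treatments of Mabuchi (see \cite{Kol12,Bou12}): compute $R$ directly from the connection formula, and then deduce $DR=0$ from the fact that along every path the covariant derivative is a derivation of the Poisson bracket.

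For the curvature identity, fix the $2$-parameter family $\f=\f(s,t)$, set $a=\f_s$, $b=\f_t$, and use that $R_\f(\cdot,\cdot)\cdot$ is tensorial to take $\psi$ a fixed function on $X$, independent of $s$ and $t$. Since $[\partial_s,\partial_t]=0$ we have $R_\f(a,b)\psi=D_sD_t\psi-D_tD_s\psi$ with $D_t\psi=-\langle\nabla\psi,\nabla b\rangle_\f$, where now $\langle\cdot,\cdot\rangle_\f$ and $\nabla$ denote the \emph{pointwise} metric and gradient of $(X,\omega_\f)$. Differentiating again,
\[
D_sD_t\psi=-\partial_s\langle\nabla\psi,\nabla b\rangle_\f+\langle\nabla\langle\nabla\psi,\nabla b\rangle_\f,\nabla a\rangle_\f ,
\]
where $\partial_s\langle\nabla\psi,\nabla b\rangle_\f$ produces a term in $\f_{st}$ (symmetric under $s\leftrightarrow t$) together with a term coming from the metric variation $\partial_s\omega_\f=dd^c a$, i.e.\ linear in the complex Hessian of $a$; and the Leibniz rule for the Levi--Civita connection of $(X,\omega_\f)$ turns $\langle\nabla\langle\nabla\psi,\nabla b\rangle_\f,\nabla a\rangle_\f$ into $\Hess_\f\psi(\nabla a,\nabla b)+\Hess_\f b(\nabla a,\nabla\psi)$. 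Forming $D_sD_t\psi-D_tD_s\psi$, the $\f_{st}$-terms cancel and so do the $\Hess_\f\psi$-terms (the Hessian being symmetric); what survives involves only the Hessians of $a$ and $b$ and their $J$-invariant parts, contracted against the gradients of the two remaining functions. The proposition then amounts to the pointwise K\"ahler identity that this surviving combination equals $-\{\{a,b\},\psi\}$, which is checked by substituting $\{f,g\}=\langle J\nabla f,\nabla g\rangle_\f$, using $\nabla J=0$, and matching terms in local holomorphic coordinates; note that the Riemann curvature of $(X,\omega_\f)$ never enters the computation.

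For $DR=0$ I would isolate the single auxiliary fact that, along any path $\f=\f(r)$ in $\HH$ and for vector fields $f,g$ along it, $D_r$ is a derivation of the bracket:
\[
D_r\{f,g\}_\f=\{D_rf,g\}_\f+\{f,D_rg\}_\f .
\]
This is again a pointwise computation on $(X,\omega_\f)$: expanding $\{f,g\}_\f=\langle J\nabla f,\nabla g\rangle_\f$ and $D_r=\partial_r-\langle\nabla\cdot,\nabla\dot\f\rangle_\f$, the contributions of $\partial_r f$ and $\partial_r g$ match the right-hand side at once, and one is left with an identity for the variation of $\{\cdot,\cdot\}_\f$ under $\omega_\f\mapsto\omega_\f+dd^c\dot\f$, which follows from $d\omega_\f=0$ and $J$-invariance. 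Granting it, $DR=0$ is purely formal: in
\[
(D_rR)_\f(a,b)\psi=D_r\bigl(R_\f(a,b)\psi\bigr)-R_\f(D_ra,b)\psi-R_\f(a,D_rb)\psi-R_\f(a,b)D_r\psi
\]
one applies the derivation property to the outer and then the inner bracket, whereupon the six terms cancel in pairs.

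The genuinely delicate part is just bookkeeping: keeping the signs and the universal constants consistent among the normalisations of $d^c$, of the Mabuchi metric and of the Poisson bracket, so that the surviving combination in the first step is exactly $-\{\{a,b\},\psi\}$. Conceptually this is all forced by Donaldson's formal identification of $\HH$ with the symmetric space $\Ham(X,\omega)^{\CC}/\Ham(X,\omega)$, whose Lie algebra is $\Cinf(X)$ with bracket the Poisson bracket and for which $R(X,Y)Z=-[[X,Y],Z]$ and $\nabla R=0$ hold automatically; but since $\Ham(X,\omega)$ has no honest complexification, the hands-on computations above are the way to make this rigorous.
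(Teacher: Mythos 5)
The paper offers no proof of this proposition: it simply refers the reader to \cite[Proposition 6.27]{Kol12}, and your sketch is precisely the standard computation carried out there --- direct differentiation of the connection formula for the curvature identity (with the observations that the $\f_{st}$-terms and the $\Hess\psi$-terms cancel under antisymmetrization in $s,t$, and that the Riemann curvature of $(X,\omega_\f)$ never appears), and the fact that $D_r$ is a derivation of the Poisson bracket, which makes $\nabla R=0$ formal. Your outline is therefore structurally correct and matches the proof the paper delegates to its reference, with the caveat you yourself flag: the two pointwise identities everything reduces to (the surviving combination equalling $-\set{\set{\f_s,\f_t},\psi}$ with the right normalisations, and the derivation property of $D_r$) are asserted rather than verified, and that is exactly where the content of the cited proof lies.
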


We refer the  reader to \cite[Proposition 6.27]{Kol12} for a proof of this result
which essentially says that $\HH$ is a \emph{locally symmetric space}: $\nabla R = 0$. 
In finite dimension, such a manifold is characterized by the fact that in the neighborhood of each point $x$, the (local) \emph{geodesic symmetry} of center $x$ : $\exp_{x}(X) \mapsto \exp_{x}(-X)$ is an isometry (see \cite{Hel01}).   Note however that the exponential map is not well defined in our infinite dimensional setting.

\subsection{The Mabuchi distance}

The length of an arbitrary path $(\f_t)_{t \in [0,1]}$ in $\HH$ is defined in a standard way,
$$
\ell(\f):=\int_0^1 |\dot{\f_t}| dt=\int_0^1 \sqrt{ \int_X \dot{\f}_t^2 MA({\f_t})} dt.
$$
The distance between two points in $\HH$ is then
$$
d(\f_0,\f_1):=\inf \{ \ell(\f) \, | \, \f
\text{ is a path  joining } \f_0 \text{ to } \f_1 \}.
$$

It is easy to verify that $d$ defines a semi-distance (i.e. non-negative, symmetric and satisfying the triangle inequality). It is however non trivial, in this infinite dimensional context, to check that $d$ is non degenerate (see \cite{MM05} for a striking example). This was proved by Chen in \cite{Chen00}
(see also \cite{CC02})
who established that

\begin{thm}\cite{Chen00,CC02} \label{thm:chen}
\text{ }
\begin{itemize}
\item $(\HH,d)$ is a metric space of non positive curvature;
\item the geodesics $(\f_t)$ are length minimizing;
\item any sequence of asymptotically length minimizing paths converge, in the Hausdorff topology, to the unique geodesic.
\end{itemize}
Moreover for all $t \in [0,1]$,
$$
d(\f_0,\f_1)=\sqrt{\int_X \dot{\f}_t^2 MA({\f_t})}.
$$
\end{thm}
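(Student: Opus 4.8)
\noindent\emph{Proof strategy.} The plan is to run everything through Chen's weak geodesics with bounded Laplacian, which (by the theorem of Chen stated above) are uniform limits, in fact ${\mathcal C}^{1,\a}$-limits for every $\a<1$, of the smooth $\e$-geodesics $\f^\e$, namely the solutions on $X\times A$ of $(\omega+dd^c\f^\e)^{n+1}=\e\,dV$. The first step is to show that along a smooth geodesic the speed $t\mapsto\int_X\dot\f_t^2\,MA(\f_t)$ is constant: differentiating in $t$, using $\frac{d}{dt}MA(\f_t)=\frac{n}{V_\a}\,dd^c\dot\f_t\wedge\omega_{\f_t}^{n-1}$, and integrating by parts once, one reduces $\frac{d}{dt}\int_X\dot\f_t^2\,MA(\f_t)$ to $2\int_X\dot\f_t\big(\ddot\f_t\,MA(\f_t)-\frac{n}{V_\a}\,d\dot\f_t\wedge d^c\dot\f_t\wedge\omega_{\f_t}^{n-1}\big)$, which vanishes by the geodesic equation~\eqref{equ:geodesic_equation}. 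Running the same computation on $\f^\e$, through the Semmes identity of Proposition~\ref{prop:Semmes}, leaves an error of size $O(\e)$ controlled by the uniform ${\mathcal C}^{1,1}$ bounds; letting $\e\to0$, so that $\f^\e\to\f$ in ${\mathcal C}^{1,\a}$ and $MA(\f^\e_t)\to MA(\f_t)$ weakly, shows that the weak geodesic has constant speed. I also record that $t\mapsto\f_t(x)$ is convex for each fixed $x$, since radial $\omega$-psh functions on $X\times A$ are convex in $\log|z|$.

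The heart of the matter is that geodesics minimize length. Given an arbitrary smooth path $(\p_s)_{s\in[0,1]}$ in $\H$ from $\f_0$ to $\f_1$, I would fix the $\e$-geodesic $\f^\e$ and prove $\ell(\p)\ge\ell(\f^\e)-C\sqrt\e$ by comparing the speed of $(\p_s)$ with that of $\f^\e$: this rests on a first variation computation together with the a priori maximum-principle estimates for the (degenerate elliptic) $\e$-geodesic equation, that is, on Chen's uniform bounds. Letting $\e\to0$ and invoking the first step yields $\ell(\p)\ge\ell(\f)$, hence $d(\f_0,\f_1)=\ell(\f)$, which by constancy of the speed equals $\sqrt{\int_X\dot\f_t^2\,MA(\f_t)}$ for every $t$.

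Non-degeneracy is then almost free: if $d(\f_0,\f_1)=0$, the weak geodesic has zero length, so $\int_X\dot\f_t^2\,MA(\f_t)=0$ for all $t$; at $t=0$ and $t=1$ the measures $MA(\f_0),MA(\f_1)$ are genuine positive volume forms and $\partial_t\f$ is continuous, so $\dot\f_0\equiv\dot\f_1\equiv0$; since $t\mapsto\f_t(x)$ is convex with vanishing derivative at both endpoints it is constant, whence $\f_0=\f_1$. (For an arbitrary competitor path one already has $|E(\f_0)-E(\f_1)|\le d(\f_0,\f_1)$, from $\frac{d}{dt}E(\p_t)=\int_X\dot\p_t\,MA(\p_t)$ and Cauchy--Schwarz, $E$ being the Aubin-Mabuchi functional~\eqref{equ:Aubin_Mabuchi}.) For the non-positive curvature of $(\H,d)$ I would start from the negativity of the curvature tensor $R_\f(\f_s,\f_t)\p=-\{\{\f_s,\f_t\},\p\}$ recorded above and upgrade it to the Alexandrov comparison by proving, after Calabi-Chen, that $t\mapsto d(\f_t,\p_t)$ is convex whenever $(\f_t)$ and $(\p_t)$ are geodesics: the Jacobi field / second variation argument from finite-dimensional comparison geometry, carried out along the smooth $\e$-geodesics and pushed to the limit with Chen's estimates. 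This convexity gives the CAT(0) inequality; applied to two geodesics with the same endpoints it forces $d(\f_t,\p_t)\equiv0$, hence uniqueness of the minimizing geodesic; and the convergence of asymptotically length-minimizing paths to it follows by the standard dyadic near-midpoint argument available in CAT(0) spaces.

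The main obstacle is the length-minimizing step, equivalently the convexity of $t\mapsto d(\f_t,\p_t)$: the polarization of the first step and the speed comparison of the second are only formal until one knows that the weak geodesic genuinely is a ${\mathcal C}^{1,1}$-limit of the smooth $\f^\e$ and that the maximum-principle arguments survive the passage to the limit. So everything rests on Chen's a priori estimates for the degenerate complex Monge-Amp\`ere equation; a secondary technical point is to justify the integrations by parts and the convergence $MA(\f^\e_t)\to MA(\f_t)$ using only the bound on $\Delta_{x,z}\f^\e$, which is exactly why the ``bounded Laplacian'' form of Chen's estimate is the one needed.
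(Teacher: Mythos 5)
Your outline follows exactly the route the paper indicates for this result: the paper does not prove Theorem \ref{thm:chen} at all, but imports it from \cite{Chen00,CC02}, remarking only that the method is to approximate by the smooth $\e$-geodesics solving $(\omega+dd^c \f_\e)^{n+1}=\e\, dV$ and to pass to the limit using Chen's a priori estimates. The pieces you actually carry out are correct: the constant-speed computation (differentiate, integrate by parts once, invoke the geodesic equation), the convexity of $t\mapsto\f_t(x)$ coming from $S^1$-invariance in the annulus variable, and the deduction of non-degeneracy once length-minimization is granted (zero speed at the endpoints, where $MA(\f_0)$ and $MA(\f_1)$ are genuine volume forms, plus convexity in $t$). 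The lower bound $|E(\f_0)-E(\f_1)|\le d(\f_0,\f_1)$ you record in passing is exactly what the paper later establishes as part of Proposition \ref{prop:lowerbound}.

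That said, the two load-bearing steps remain plans rather than proofs. The inequality $\ell(\p)\ge\ell(\f_\e)-C\sqrt{\e}$ for an \emph{arbitrary} competitor path is the heart of \cite{Chen00}: it requires, for each parameter $s$ along the competitor, introducing the $\e$-geodesic from $\f_0$ to $\p_s$ and proving a first-variation formula for its energy with respect to the moving endpoint, with error terms controlled uniformly in $\e$ by the a priori bounds; saying ``a first variation computation together with the maximum-principle estimates'' names the ingredients but does not produce the inequality. Likewise, upgrading the formal curvature identity $R_\f(\f_s,\f_t)\p=-\{\{\f_s,\f_t\},\p\}$ to non-positive curvature of the \emph{metric space} $(\H,d)$ in the sense of Alexandrov is precisely the content of \cite[Theorem 1.1]{CC02} (the CN inequality of Bruhat--Tits); in this infinite-dimensional, weakly Riemannian setting where geodesics are only of class ${\mathcal C}^{1,1}$, the finite-dimensional Jacobi-field comparison does not transfer automatically, and the second-variation argument ``pushed to the limit'' is where all the work lies. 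Since the paper itself treats both points as black boxes, your proposal sits at the same level of completeness as the text; just be aware that a self-contained proof would have to supply exactly these two estimates.
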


\begin{rem}
A delicate issue here is that it is not clear that the geodesics constructed by Chen lie in $\H$.
This problem has been adressed by Berman and Demailly \cite{BD12}
(see also \cite{He12}) who showed that the whole construction
can be extended to 
$$
\H_{1,1}:=\{ \f \in PSH(X,\omega) \, | \, \Delta \f \in L^{\infty}(X) \}.
$$
\end{rem}

Observe that $d$ induces a distance on $\HH_\a$ 
(that we abusively still denote by $d$)  compatible with the riemannian splitting
$\HH=\HH_\a \times \R$, by setting
$$
d(\omega_\f,\omega_\p):=d(\f,\p)
$$
whenever the potentials $\f,\p$ of $\omega_\f,\omega_\p$ are normalized by $E(\f)=E(\p)=0$.

\medskip

It is rather easy to check that $(\HH_\a,d)$ is not a complete metric space. 
Describing the metric
completion $(\overline{\HH}_\a,d)$ is the main goal of this article. We shall always work at the level of potentials, i.e. we will try and understand the metric completion of the space $(\HH,d)$.

\section{Finite energy classes} \label{sec:energy}

We fix $\omega$ a K\"ahler form representing $\a$ and define in this section the
set ${\mathcal E}(\a)$ (resp. ${\mathcal E}^p(\a)$)
of positive closed currents $T=\omega+dd^c \f$ with full Monge-Amp\`ere mass 
(resp. finite weighted energy) in $\a$,
by defining the corresponding class ${\mathcal E}(X,\omega)$ 
(resp. ${\mathcal E}^p(X,\omega)$ ) of potentials $\f$.

\subsection{The space ${\mathcal E}(\a)$}

\subsubsection{Quasi-plurisubharmonic functions}
Recall that a function is quasi-plurisubharmonic if it is locally given as the sum of  a smooth and a psh function. In particular quasi-psh functions are upper semi-continuous and $L^1$-integrable.
Quasi-psh functions are actually in $L^p$ for all $p \geq 1$, and the induced topologies are all equivalent.
A much stronger integrability property actually holds:
Skoda's integrability theorem \cite{Sko} asserts indeed that $e^{-\e \f} \in L^1(X)$ if 
$0 < \e$ is smaller than $2/\nu(\f)$,  where $\nu(\f)$ denotes the maximal logarithmic singularity (Lelong number) of $\f$ on $X$.

Quasi-plurisubharmonic functions have gradient in $L^r$ for all $r<2$, but not in $L^2$ as shown by the local model $\log |z_1|$.

\begin{defi}
We let $PSH(X,\omega)$ denote the set of all $\omega$-plurisubharmonic functions. 
These are quasi-psh functions
$\f:X \rightarrow \R \cup \{-\infty\}$ such that
$$
\omega+dd^c \f \geq 0
$$
in the weak sense of currents. 
\end{defi}

The set $PSH(X,\omega)$ is a closed subset of $L^1(X)$, when endowed with the $L^1$-topology.

\subsubsection{Bedford-Taylor theory}

Bedford and Taylor have observed in \cite{BT82} that one can define the complex Monge-Amp\`ere operator
$$
MA(\f):=\frac{1}{V_\a} (\omega+dd^c \f)^n
$$
for all {\it bounded} $\omega$-psh function: they showed that whenever $(\f_j)$ is a sequence of smooth
$\omega$-psh functions locally decreasing to $\f$, then the smooth probability measures $MA(\f_j)$
converge, in the weak sense of Radon measures, towards a unique 
probability measure that we denote by $MA(\f)$.

At the heart of Bedford-Taylor's theory lies the following {\it maximum principle}: if $u,v$ are bounded
$\omega$-plurisubharmonic functions, then
$$
\hskip-3cm (MP) \hskip2cm 1_{\{v<u\}} MA(\max(u,v)) =1_{\{v<u\}} MA(u).
$$
This equality is elementary when $u$ is {\it continuous}, as the set $\{v<u\}$ is then a Borel open subset of $X$. When $u$ is merely {\it bounded}, this set is only open in the plurifine topology. Since Monge-Amp\`ere measures of bounded qpsh functions do not charge pluripolar sets 
(by the so called Chern-Levine-Nirenberg inequalities), and since $u$ is nevertheless {\it quasi-continuous},
this gives a heuristic justification for $(MP)$.

The reader will easily verify that the maximum principle
$(MP)$ implies the so called {\it comparison principle}:

\begin{prop}
Let $u,v$ be bounded
$\omega$-plurisubharmonic functions. Then
$$
\int_{\{v<u\}}  MA(u) \leq \int_{\{v<u\}}  MA(v).
$$
\end{prop}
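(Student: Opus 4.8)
The plan is to deduce the comparison principle directly from the maximum principle $(MP)$, together with the conservation of total mass: $\int_X MA(\f)=1$ for every bounded $\omega$-psh function $\f$, the Monge--Amp\`ere measure being normalized so as to be a probability measure. The one genuine subtlety is that $(MP)$ gives no control on the contact set $\{u=v\}$, so a small vertical shift is needed in order to push the analysis onto plurifine-open sets where $(MP)$ applies cleanly.

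Concretely, I would fix $\e>0$ and set $w_\e:=\max(u-\e,v)$, which is again a bounded $\omega$-psh function. On the plurifine-open set $\{v<u-\e\}$ one has $w_\e=u-\e$, so $(MP)$ applied with $u-\e$ in place of $u$ (and using $MA(u-\e)=MA(u)$) gives $1_{\{v<u-\e\}}MA(w_\e)=1_{\{v<u-\e\}}MA(u)$. Symmetrically, on $\{u-\e<v\}$ one has $w_\e=v$, so $(MP)$ gives $1_{\{u-\e<v\}}MA(w_\e)=1_{\{u-\e<v\}}MA(v)$. These two sets are disjoint with union $X\setminus\{v=u-\e\}$, and $MA(w_\e)\ge 0$ with $\int_X MA(w_\e)=1$, so adding the two contributions yields
$$
\int_{\{v<u-\e\}}MA(u)+\int_{\{u-\e<v\}}MA(v)\;\le\;1\;=\;\int_X MA(v),
$$
whence $\int_{\{v<u-\e\}}MA(u)\le \int_{\{v\le u-\e\}}MA(v)\le\int_{\{v<u\}}MA(v)$, the last step because $\{v\le u-\e\}\subset\{v<u\}$.

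Finally I would let $\e\downarrow 0$: the sets $\{v<u-\e\}$ increase to $\{v<u\}$, so by monotone convergence $\int_{\{v<u-\e\}}MA(u)\uparrow\int_{\{v<u\}}MA(u)$, and the stated inequality follows. The main obstacle is purely the treatment of the diagonal $\{u=v\}$ (resolved by the $\e$-shift); everything else is bookkeeping with $(MP)$ and the invariance of total Monge--Amp\`ere mass. One should also note that $\{v<u-\e\}$ is only plurifine-open when $u,v$ are merely bounded, which is precisely the generality in which $(MP)$ was stated, so no additional regularity is used.
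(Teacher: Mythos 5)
Your proof is correct, and it is precisely the argument the paper has in mind: the text gives no proof but asserts that $(MP)$ implies the comparison principle, and your $\e$-shift via $\max(u-\e,v)$ together with the conservation of total Monge--Amp\`ere mass is the standard way to fill in that verification, handling the contact set correctly. Nothing is missing.
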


\subsubsection{The class ${\mathcal E}(X,\omega)$}

Given   $\f \in PSH(X,\omega)$, we consider its canonical approximants
$$
\f_j:=\max(\f, -j) \in PSH(X,\omega) \cap L^{\infty}(X).
$$
It follows from the Bedford-Taylor theory that the measures $MA(\f_j)$ are well defined probability measures.
 Since the $\f_j$'s are decreasing, it is natural to expect that these measures converge (in the weak sense). 
The following strong monotonicity property holds:
  
\begin{lem}
The sequence
$
\mu_j:={\bf 1}_{\{ \f>-j\}} MA(\f_j)
$
is an increasing sequence of Borel measures.
\end{lem}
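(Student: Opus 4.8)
The plan is to prove that the sequence $\mu_j:={\bf 1}_{\{\f>-j\}}MA(\f_j)$, with $\f_j=\max(\f,-j)$, is increasing by comparing consecutive terms $\mu_j$ and $\mu_{j+1}$ and localizing on the plurifine-open set where things are controlled. First I would fix $j$ and observe that on the set $\{\f>-j\}$ we have $\f_j=\f=\f_{j+1}$, so the two bounded $\omega$-psh functions $\f_j$ and $\f_{j+1}$ agree there; the heart of the matter is then the locality of the Monge--Amp\`ere operator in the plurifine topology, which says that if two bounded $\omega$-psh functions coincide on a plurifine-open set $U$ then their Monge--Amp\`ere measures coincide on $U$. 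Granting this, on $\{\f>-j\}$ we get $MA(\f_j)=MA(\f_{j+1})$, hence
$$
\mu_j={\bf 1}_{\{\f>-j\}}MA(\f_j)={\bf 1}_{\{\f>-j\}}MA(\f_{j+1}).
$$

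Next I would compare with $\mu_{j+1}={\bf 1}_{\{\f>-j-1\}}MA(\f_{j+1})$. Since $\{\f>-j\}\subset\{\f>-j-1\}$, we immediately obtain
$$
\mu_j={\bf 1}_{\{\f>-j\}}MA(\f_{j+1})\le {\bf 1}_{\{\f>-j-1\}}MA(\f_{j+1})=\mu_{j+1},
$$
because $MA(\f_{j+1})$ is a nonnegative measure and we are enlarging the indicator set. This gives the desired monotonicity $\mu_j\le\mu_{j+1}$ as Borel measures.

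The step I expect to be the main obstacle is the locality statement for the Monge--Amp\`ere operator on plurifine-open sets: the set $\{\f>-j\}$ is in general only plurifine-open, not open, so one cannot simply invoke continuity. The clean way to handle this is via the maximum principle $(MP)$ quoted above. Write $\f_j=\max(\f_{j+1},-j)$ (valid since $\f_{j+1}\ge -j$ exactly on $\{\f>-j\}$, up to the pluripolar boundary where $MA$ puts no mass). Applying $(MP)$ with $u=\f_{j+1}$ and $v\equiv -j$, and noting that on $\{v<u\}=\{\f>-j\}$ one has $\max(u,v)=\f_{j+1}$ while also $\max(\f,-j)=\f_j$ agrees with $\f_{j+1}$ there, yields ${\bf 1}_{\{\f>-j\}}MA(\f_j)={\bf 1}_{\{\f>-j\}}MA(\f_{j+1})$ directly. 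A small technical point worth spelling out is that $MA(\f_j)$ and $MA(\f_{j+1})$ do not charge the pluripolar set $\{\f=-\infty\}$ by the Chern--Levine--Nirenberg inequalities, so the ambiguity on the boundary $\{\f=-j\}$ versus its interior is immaterial; I would mention this but not belabor it. Once $(MP)$ is invoked correctly the rest is the elementary monotonicity of enlarging an indicator inside a positive measure, as above.
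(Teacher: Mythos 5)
Your argument is correct and is exactly the ``elementary consequence of $(MP)$'' that the paper invokes (citing \cite[p.445]{GZ07}): apply $(MP)$ with $u=\f_{j+1}$ and $v\equiv -j$ to get ${\bf 1}_{\{\f>-j\}}MA(\f_j)={\bf 1}_{\{\f>-j\}}MA(\f_{j+1})$, then enlarge the indicator set from $\{\f>-j\}$ to $\{\f>-j-1\}$. The only superfluous point is your pluripolar-boundary caveat: the identity $\f_j=\max(\f_{j+1},-j)$ holds everywhere on $X$, so no such care is needed (and $\{\f=-j\}$ need not be pluripolar anyway; $(MP)$ already works on the strict-inequality set).
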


\noindent The proof is an elementary consequence of $(MP)$ (see \cite[p.445]{GZ07}).

\smallskip

Since the $\mu_j$'s all have total mass bounded from above by $1$ (the total mass of the measure $MA(\f_j)$),  we can consider
$$
\mu_{\f}:=\lim_{j \rightarrow +\infty} \mu_j,
$$
which is a positive Borel measure on $X$, with total mass $\leq 1$.

\begin{defi}
We set 
$$
{\mathcal E}(X,\omega):=\left\{ \f \in PSH(X,\omega) \; | \; \mu_{\f}(X)=1 \right\}.
$$
For $\f \in {\mathcal E}(X,\omega)$, we set
$
MA(\f):=\mu_{\f}.
$
\end{defi}

The notation is justified by the following important fact:
the complex Monge-Amp\`ere operator $\f \mapsto MA(\f)$ is well defined on the class 
${\mathcal E}(X,\omega)$, i.e. for every decreasing sequence of bounded (in particular 
smooth) $\omega$-psh functions $\f_j$, the probability measures 
$MA(\f_j)$ weakly converge towards $\mu_\f$, if $\f \in {\mathcal E}(X,\omega)$.

\smallskip

Every bounded $\omega$-psh function clearly belongs to ${\mathcal E}(X,\omega)$ 
since  in this case $\{\f >-j\}=X$ for $j$ large enough, hence 
$$
\mu_{\f} \equiv \mu_j=MA(\f_j)=MA(\f).
$$
The class ${\mathcal E}(X,\omega)$ also contains many $\omega$-psh functions which are unbounded.
When $X$ is a compact Riemann surface ($n=\dim_{\C} X=1$),
the set ${\mathcal E}(X,\om)$ is the set of $\om$-sh functions
whose Laplacian does not charge polar sets.

\begin{rem}
If $\f \in PSH(X,\omega)$ is normalized so that $\f \leq -1$, then $-(-\f)^\e $ belongs to 
${\mathcal E}(X,\om)$ whenever $0 \leq \e <1$.
The functions which belong to the class ${\mathcal E}(X,\om)$,
although usually unbounded,
have relatively mild singularities. In particular they have zero Lelong numbers.
\end{rem}

It is shown in \cite{GZ07} that the maximum principle $(MP)$ and the comparison principle
continue to hold in the class ${\mathcal E}(X,\omega)$. The latter can be characterized as the largest class for which the complex Monge-Amp\`ere is well defined and the maximum principle holds. We further note
that the {\it domination principle} holds:

\begin{prop} \label{pro:dp}
If $\f,\p \in {\mathcal E}(X,\om)$ are such that 
$$
\f(x) \leq \p(x)
\text{ for } MA(\p)-\text{a.e. } x,
$$
then $\f(x) \leq \p(x)$ for all $x \in X$.
\end{prop}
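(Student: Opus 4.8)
The plan is to reduce the statement to the bounded case (where it follows from the comparison principle) by truncation, and then let the truncation go to infinity using the strong monotonicity built into the definition of the class $\mathcal E(X,\omega)$. First I would replace $\f,\p$ by their canonical approximants $\f_j=\max(\f,-j)$, $\p_k=\max(\p,-k)$, which are bounded and for which Bedford--Taylor's comparison principle (stated above) applies. The key elementary inequality to exploit is: for bounded potentials $u,v$ one has $\int_{\{u<v\}} MA(u)\le \int_{\{u<v\}} MA(v)$, and more precisely, via $(MP)$, that on the set $\{u<v\}$ the Monge--Amp\`ere measure of $u$ and of $\min(u,v)$ agree.

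The main step is to estimate $MA(\p)(\{\f<\p\})$ and show it is zero, which by the hypothesis $\f\le\p$ a.e.\ with respect to $MA(\p)$ would then be automatic --- wait, that is exactly the hypothesis. So the real content is the reverse: knowing the inequality holds $MA(\p)$-a.e., upgrade it to everywhere. Here is the mechanism. For $\e>0$ consider the set $U_\e=\{\f<\p-\e\}$; one wants $MA(\p)(U_\e)=0$ to conclude, since $\{\f<\p\}=\bigcup_\e U_\e$ and this already holds by assumption. The genuine assertion ``$\f\le\p$ everywhere'' must instead be extracted from a pointwise/capacity argument: suppose not, so $\{\f<\p\}$ is a nonempty (necessarily nonpluripolar, by a suitable argument) set; one truncates to get bounded $\tilde\f=\max(\f,\p-C)$ with $\tilde\f\le\p$ off a small set, applies the comparison principle to $\tilde\f$ and $\p_k$ on $\{\tilde\f<\p\}$, and uses that $MA(\p)$ does not charge the pluripolar locus together with the monotone convergence $MA(\p_k)\to MA(\p)$ on the class $\mathcal E$. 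Combining $\int_{\{\tilde\f<\p\}}MA(\p)\le \int_{\{\tilde\f<\p\}}MA(\tilde\f)$ with the fact that on $\{\f<\p\}$ one has $MA(\tilde\f)$ concentrated where $\f>-C$ --- hence where $MA(\p)(\{\f\le\p\})$ controls things --- forces the offending set to have zero $MA(\p)$-measure, and then a further argument (e.g.\ the one in \cite{GZ07}, using that $\f$ is the decreasing limit of its truncations and $\p\in\mathcal E(X,\omega)$) promotes this to emptiness.

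The hard part is the last promotion from ``zero $MA(\p)$-measure'' to ``everywhere'': this is where one genuinely uses that we are in $\mathcal E(X,\omega)$ rather than among bounded potentials, via the characterization that $MA(\p)$ puts full mass $1$ on $X$ and does not charge pluripolar sets, so that a nonpluripolar open (in the plurifine topology) piece of $\{\f<\p\}$ would have to be charged. I would isolate this as the crux and handle the truncation bookkeeping around it routinely; the comparison principle does all the bounded-level work, and the monotone limit in the definition of $MA$ on $\mathcal E(X,\omega)$ transports the inequality to the limit.
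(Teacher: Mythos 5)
First, a point of reference: the paper states this proposition without proof, quoting it from \cite{GZ07}, so there is no internal argument to match; your attempt has to stand on its own, and it does not. There are several local errors. Throughout, you work with the set $\{\f<\p\}$, but the offending set is $\{\f>\p\}$: the hypothesis is exactly $MA(\p)(\{\f>\p\})=0$, so everything you ``derive'' about $\{\f<\p\}$ (or about $U_\e=\{\f<\p-\e\}$) concerns the harmless region where the conclusion already holds, which is why you keep rediscovering the hypothesis. You also quote the comparison principle with the wrong orientation ($\int_{\{u<v\}}MA(u)\le\int_{\{u<v\}}MA(v)$: on $\{u<v\}$ it is the Monge--Amp\`ere mass of the \emph{larger} function $v$ that is dominated), and the assertion that $(MP)$ identifies $MA(u)$ with $MA(\min(u,v))$ on $\{u<v\}$ is not meaningful, since $\min(u,v)$ need not be $\om$-psh.

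The decisive problem is the step you yourself call the crux. You propose to pass from ``$MA(\p)$-negligible'' to ``empty'' by arguing that $MA(\p)$, having full mass and charging no pluripolar set, ``would have to charge'' any nonpluripolar plurifine-open piece of the bad set. That implication is false: ``does not charge pluripolar sets'' does not entail ``charges every nonpluripolar set''. Already on $\P^1$ with $\p$ bounded and $MA(\p)$ equal to normalized arc length on the unit circle, $MA(\p)$ vanishes on large open sets. The actual mechanism (see \cite{GZ07} or \cite[Corollary 2.5]{BEGZ10}) is quantitative: one perturbs to $(1-\e)\f+\e\rho$ for a bounded $\om$-psh $\rho$ with $MA(\rho)\ge c_0\,\om^n$ (a constant already works here), applies the comparison principle in $\E(X,\om)$ to bound $\e^n c_0$ times the volume of the (perturbed) bad set by its $MA(\p)$-mass, which the hypothesis makes negligible; this shows the bad set has zero Lebesgue measure, and the sub-mean-value property of quasi-psh functions then upgrades a Lebesgue-a.e. inequality between $\om$-psh functions to an everywhere inequality. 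Neither the perturbation by a potential with volume-dominating Monge--Amp\`ere measure nor the Lebesgue-a.e.-implies-everywhere step appears in your argument, and without them it does not close.
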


Let us stress that the convergence of the canonical approximating measures
$\mu_j=MA(\max(\f,-j))$ towards $\mu_{\f}$ holds in the (strong) sense of Borel measures, i.e. for all Borel sets $B$,
$$
\mu_{\f}(B):=\lim_{j \rightarrow +\infty} \mu_j(B).
$$
In particular when $B=P$ is a pluripolar set, we obtain $\mu_j(P)=0$, hence 
$$
\mu_{\f}(P)=0
\text{ for all pluripolar sets } P.
$$
Conversely, one can show \cite{GZ07,BEGZ10} that a probability measure $\mu$ equals $\mu_{\f}$ for some
$\f \in {\mathcal E}(X,\omega)$ whenever $\mu$ does not charge pluripolar sets
(one then says that $\mu$ is non-pluripolar).

\smallskip

It follows from the $\partial\overline{\partial}$-lemma that any positive closed current 
$T \in \a$ writes $T=\omega+dd^c \f$ for some function $\f \in PSH(X,\omega)$
which is unique up to an additive constant. 

\begin{defi}
We let ${\mathcal E}(\a)$ denote the set of all positive currents in $\a$,
$T=\omega+dd^c \f$,
with $\f \in {\mathcal E}(X,\omega)$.
\end{defi}

The definition is clearly independent of the choice of the potential $\f$.

\subsection{The complete metric spaces ${\mathcal E}^p(\a)$}

\subsubsection{Weighted energy classes}

Let $\chi:\R^- \rightarrow \R^-$ be an increasing function
such that $\chi(-\infty)=-\infty$. Following \cite{GZ07} we let $\W$ denote the set of all such weights,
and set
$$
\W^\pm:=\{ \chi \in \W \, | \, \mp\chi \text{ is convex} \}
$$

\begin{defi}
We let
${\mathcal E}_{\chi}(X,\om)$ denote the set of 
$\om$-psh
functions with
finite $\chi$-energy, i.e. 
$$
{\mathcal E}_{\chi}(X,\om):=\left\{ \f \in {\mathcal E}(X,\om) \, / \, 
 \chi(-|\f|) \in L^1( MA(\f)) \right\}.
$$
When $\chi(t)=-(-t)^p$, $p>0$, we set $\E^p(X,\omega)=\E_\chi(X,\omega)$. We let
$$
{\mathcal E}_{\chi}(\a)=\{ T=\omega+dd^c \f \, | \, \f \in {\mathcal E}_{\chi}(X,\omega) \}
\; \; \text{ and } \; \; 
{\mathcal E}^p(\a)
$$
denote the corresponding sets of finite weighted energy currents.
\end{defi}

We list a few important properties of these classes and refer the reader to \cite{GZ07} for the proofs:
\begin{itemize}
\item $\E(X,\omega)=\cup_{\chi \in \W} \, \E_\chi(X,\omega)=\cup_{\chi \in \W^-} \, \E_\chi(X,\omega)$;
\item $PSH(X,\om) \cap L^{\infty}(X)=\cap_{\chi \in {\mathcal W}} \E_\chi(X,\omega)=
\cap_{\chi \in {\mathcal W}^+} \E_\chi(X,\omega)$;
\item when $\chi \in {\mathcal W}^+$ any $\f \in {\mathcal E}_{\chi}(X,\om)$ is such that
$\nabla_\omega \f \in L^2(\om^n)$;
\item $\f \in \E^p(X,\omega)$ if and only if for any (resp. one)  sequence of bounded
$\omega$-functions decreasing to $\f$, 
$\sup_j \int_X (-\f_j)^p MA(\f_j) <+\infty$.
\end{itemize}

 \begin{prop} \cite[Proposition 3.8]{GZ07} \label{pro:GZ3.8}
Fix $p>0$.
There exists $C_{p}>0$ such that for all 
$0 \geq  \f_0,\ldots,\f_n \in PSH(X,\om) \cap L^{\infty}(X)$,
$$
0 \leq \int_X (-\f_0)^p \, \om_{\f_1} \wedge \cdots \wedge \om_{\f_n}
\leq C_{p} \max_{0 \leq j \leq n} \left[ \int_X (-\f_j)^p \,  \om_{\f_j}^n \right].
$$
In particular the class $\E^p(X,\omega)$ is starshaped and convex.
\end{prop}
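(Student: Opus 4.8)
The plan is to establish the inequality first for the extreme case and then bootstrap, exploiting the fact that the right-hand side is symmetric in the indices $1,\dots,n$ but the left-hand side distinguishes $\f_0$. I would begin by reducing to the case where all the $\f_j$ are smooth (or at least bounded) strictly $\omega$-psh, so that Bedford--Taylor theory applies freely and integration by parts is legitimate; the passage to the general bounded case follows by approximation via decreasing sequences, using the continuity of mixed Monge--Amp\`ere measures along such sequences. The core estimate to prove is that for $0 \geq u,v \in PSH(X,\omega)\cap L^\infty$ and any collection of forms $\omega_{\f_2},\dots,\omega_{\f_n}$,
\[
\int_X (-u)^p\, \omega_v \wedge \omega_{\f_2}\wedge\cdots\wedge\omega_{\f_n} \leq C_p \left[ \int_X (-u)^p\,\omega_u\wedge\omega_{\f_2}\wedge\cdots\wedge\omega_{\f_n} + \int_X (-v)^p\,\omega_v\wedge\omega_{\f_2}\wedge\cdots\wedge\omega_{\f_n}\right],
\]
i.e.\ one can swap a single factor $\omega_v$ for $\omega_u$ at the cost of a universal constant and an extra term. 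Iterating this $n$ times — first replacing every $\omega_{\f_j}$ by $\omega_{\f_0}$ to compare with $\int (-\f_0)^p \omega_{\f_0}^n$, picking up the terms $\int(-\f_j)^p\,(\cdots)$ along the way, and handling those recursively — yields the stated bound with $C_p$ depending only on $p$ and $n$.

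The one-factor-swap estimate is where the real work lies. Writing $\omega_v - \omega_u = dd^c(v-u)$, one has
\[
\int_X (-u)^p\,(\omega_v-\omega_u)\wedge S = \int_X (-u)^p\, dd^c(v-u)\wedge S,
\]
where $S=\omega_{\f_2}\wedge\cdots\wedge\omega_{\f_n}$ is a fixed positive closed current. Integrating by parts twice moves the $dd^c$ onto $(-u)^p$; since $dd^c\big((-u)^p\big) = p(p-1)(-u)^{p-2}\,du\wedge d^cu \;-\; p(-u)^{p-1}\,dd^c u$ and $dd^c u = \omega_u - \omega \leq \omega_u$, one controls the resulting terms by Cauchy--Schwarz applied to the mixed-energy bilinear form, in the spirit of the estimates in \cite{GZ07}; the gradient term $du\wedge d^c u\wedge S$ is nonpositive in the relevant direction or is itself dominated by $\int(-u)^{p-2}du\wedge d^c u \wedge S$, which one reabsorbs using a Young-type inequality into $\int(-u)^p\,\omega_u\wedge(\cdots)$ and $\int(-u)^p\,\omega_v\wedge(\cdots)$. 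This is precisely the delicate point: one must be careful that every integration by parts is justified for bounded $\omega$-psh potentials (appealing to Bedford--Taylor) and that the constants that emerge genuinely depend only on $p$ and the dimension, not on the individual potentials.

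I expect the main obstacle to be the bookkeeping in the induction combined with keeping the constant uniform: each swap introduces an additive error term of the same shape but with a different potential playing the distinguished role, so one needs a clean inductive statement — something like ``for every $0\leq k\leq n$, $\int_X(-\f_0)^p\,\omega_{\f_1}\wedge\cdots\wedge\omega_{\f_k}\wedge\omega^{n-k}\leq C_{p,k}\max_j\int_X(-\f_j)^p\,\omega_{\f_j}^n$'' — proved by induction on $k$, so that the quoted result is the case $k=n$. The base case $k=0$ is the classical $\int(-\f_0)^p\,\omega^n \leq C_p \int(-\f_0)^p\,\omega_{\f_0}^n$, which already requires the same integration-by-parts trick with $v$ replaced by $0$. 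Once the uniform one-swap lemma is in hand, the induction is routine. For the final sentence of the statement: starshapedness of $\E^p(X,\omega)$ with respect to $0$ (i.e.\ $t\f\in\E^p$ for $\f\in\E^p$, $t\in[0,1]$) follows because $\omega_{t\f}=(1-t)\omega + t\omega_\f$, so $\omega_{t\f}^n$ expands as a sum of mixed terms $\omega_\f^j\wedge\omega^{n-j}$ each of which is controlled by Proposition~\ref{pro:GZ3.8} itself applied to $\f_0=\cdots=\f_j=t\f$; convexity then follows from starshapedness together with the fact that $\E^p(X,\omega)\subset\E(X,\omega)$ is stable under taking maxima and that a convex combination $t\f + (1-t)\p$ lies above $\min(\f,\p) + \text{const}$ — more directly, one bounds the mixed masses $\int(-(t\f+(1-t)\p))^p\,\omega_{t\f+(1-t)\p}^n$ by expanding both the potential (using $(-a-b)^p \leq 2^p((-a)^p+(-b)^p)$ up to sign conventions) and the Monge--Amp\`ere form into mixed terms and invoking the main inequality on each.
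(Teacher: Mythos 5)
First, a caveat on the comparison itself: the paper does not prove this proposition — it is imported verbatim from \cite{GZ07} (Proposition 3.8 there) and used as a black box — so the only meaningful benchmark is the argument in \cite{GZ07}. Your toolbox (Bedford--Taylor approximation, integration by parts to trade a factor $\omega_v$ for $\omega_u$, induction on the number of non-reference factors) is the right family of techniques, and your deduction of starshapedness and convexity of $\E^p(X,\omega)$ from the main inequality is essentially correct.

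However, the analytic core of the proposal is not established, and there are two genuine gaps. First, the one-factor-swap estimate is asserted rather than proved: after integrating by parts you face
$$
\int_X (v-u)\left[\,p(p-1)(-u)^{p-2}\,du\wedge d^cu \;-\; p(-u)^{p-1}(\omega_u-\omega)\right]\wedge S ,
$$
where $(v-u)$ has no sign, $du\wedge d^cu\wedge S$ is a \emph{positive} measure (so it is not ``nonpositive in the relevant direction''; its contribution carries the uncontrolled pointwise sign of $(v-u)\,p(p-1)$), for $0<p<1$ the densities $(-u)^{p-2}$ and $(-u)^{p-1}$ degenerate near $\{u=0\}$ and the Young-type inequality reverses, and any ``reabsorption'' into $\int(-u)^p\,\omega_v\wedge(\cdots)$ is a reabsorption into the left-hand side, which requires an explicit coefficient strictly less than $1$ that you never exhibit. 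Second, the recursion does not visibly terminate: a swap produces, besides the improved term, an error term such as $\int(-\varphi_2)^p\,\omega_{\varphi_2}\wedge\omega_{\varphi_0}\wedge S'$ in which the newly distinguished potential no longer matches the slots already processed, so the number of ``unmatched'' factors does not decrease along that branch and the induction on $k$ does not close as stated. In \cite{GZ07} this is avoided by organizing the argument around the fundamental inequality comparing $\int(-\chi)(v)\,\omega_v^n$ with $\int(-\chi)(u)\,\omega_u^n$ for \emph{ordered} potentials $u\le v\le 0$ — where the sign of $v-u$ is available and the integration by parts does close — together with a reduction of the mixed term to such ordered comparisons; without an ordering device of this kind (or a symmetrization reducing to a single potential), your scheme does not yield the uniform constant $C_p$.
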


\subsubsection{Strong topology on  ${\mathcal E}^1(\a)$}

The class $\E^1(X,\omega)$ plays a central role. Its defining weight 
$\chi(t)=t$ is the only weight which is both convex and 
concave\footnote{Up to scaling and translating, two operations which leave the class invariant.},
$$
\E^1(X,\omega)=\cup_{\chi \in \W^-} \, \E_\chi(X,\omega)  \bigcap 
\cup_{\chi \in {\mathcal W}^+} \E_\chi(X,\omega).
$$
If we extend the Aubin-Mabuchi functional using its monotonicity property,
$$
E(\f):=\inf \{ E(\p) \, | \, \p \in PSH(X,\omega) \cap {\mathcal C}^{\infty}(X) \text{ and } \f \leq\p \},
$$
then 
$$
\E^1(X,\omega)=\{ \f \in PSH(X,\omega) \, | \, E(\f)>-\infty \}.
$$
Thus $\E^1(X,\omega)$ is the natural frame for the variational approach to studying complex 
Monge-Amp\`ere operators \cite{BBGZ}. Set
$$
I(\f,\p)=\int_X (\f-\p) \left( MA(\p)-MA(\f) \right).
$$

It has been shown in \cite{BBEGZ} that $I$ defines a complete metrizable uniform structure on
${\mathcal E}^1(\a)$. More precisely we identify ${\mathcal E}^1(\a)$ with the set
$$
\E^1_{norm}(X,\omega)=\{ \f \in \E^1(X,\omega) \, | \, \sup_X \f=0 \}
$$
of normalized potentials. Then
\begin{itemize}
\item $I$ is symmetric and positive on $\E^1_{norm}(X,\omega)^2 \setminus \{ \rm{diagonal} \}$;
\item $I$ satisfies a quasi-triangle inequality \cite[Theorem 1.8]{BBEGZ};
\item $I$ induces a uniform structure which is metrizable \cite{Bourbaki};
\item the metric space $(\E^1(\a),d_1)$ is complete \cite[Proposition 2.4]{BBEGZ}.
\end{itemize}

\begin{defi}
The {\it strong topology} on $\E^1(\a)$ is the topology defined by $I$. We fix in the sequel a distance 
$d_1$ which defines this metrizable topology. 
\end{defi}

The corresponding notion of convergence corresponds 
to the {\it convergence in energy} previously introduced
in \cite{BBGZ} (see \cite[Proposition 2.3]{BBEGZ}). It is the coarsest refinement of the weak topology
such that $E$ becomes continuous. In particular 
$$
\text{ if } T_j \longrightarrow T  \text{ in } (\E^1(\a),d_1),  \; \text{ then } \; 
T_j^n \longrightarrow T^n
$$ 
in the weak sense of Radon measures, while the Monge-Amp\`ere operator is usually discontinuous for the weak topology of currents.

\begin{exa}
When $\dim_\C X=n=1$, $\E^1(X,\omega)=PSH(X,\omega) \cap W^{1,2}(X)$ is the set of 
$\omega$-subharmonic functions with square integrable gradient. The strong topology on
$\E^1(\a)$ is the one induced by the Sobolev norm. 

When $X=\P^1_\C$ is the Riemann sphere, $\omega=\omega_{FS}$ is the Fubini-Study K\"ahler form,  
and $\e_j \searrow 0^+$, $C_j \nearrow +\infty$, the functions
$$
\f_j[z]=\e_j \max ( \log ||z||, \log |z_0|-C_j)-\e_j \log ||z|| \in \E^1(X,\omega)
$$
converge to zero in $L^2(X)$ but not in the strong topology if $\e_j^2 C_j \geq 1$ since
$$
I(\f_j,0)=\int_{\P^1} d \f_j \wedge d^c \f_j \sim \e_j^2 C_j.
$$
\end{exa}

\subsubsection{Strong topology on  ${\mathcal E}^2(\a)$}

We now introduce an analogous strong topology on the class ${\mathcal E}^2(\a)$.
For $\f,\p \in \E^2(X,\omega)$, we set
$$
I_2(\f,\p):=\sqrt{ \int_X (\f-\p)^2 \left[ \frac{MA(\f)+MA(\p)}{2} \right]}
$$

This quantity is well-defined (and finite) by \cite[Proposition 3.6]{GZ07}. It is obviously non-negative and 
symmetric. It follows from the {\it domination principle} (Proposition \ref{pro:dp}) that
$$
I_2(\f,\p)=0 \Longrightarrow \f=\p.
$$  

\begin{defi}
The strong topology on $\E^2(\a)$ is the one induced by $I_2$. 
\end{defi}

It follows from Propositions \ref{prop:lowerbound} and \ref{prop:upperbound} below that the Mabuchi topology (induced by the Mabuchi distance) is stronger than the strong topology on $\E^1(\a)$
and weaker than the strong topology on $\E^2(\a)$. We set
$$
\E^2_{norm}(X,\omega)=\{ \f \in \E^2(X,\omega) \; | \; \sup_X \f=0 \}
$$

Note that if a sequence $(\f_j) \in \E^2_{norm}(X,\omega)$ is a Cauchy sequence for $I_2$, then it is
a Cauchy sequence in $(\E^1_{norm}(X,\omega),d_1)$ since
$$
0 \leq I(\f,\p)=\int_X (\f-\p) \left[ MA(\p)-MA(\f) \right]\leq \sqrt{2} I_2(\f,\p),
$$
as follows from the Cauchy-Schwarz inequality.
Since $(\E^1_{norm}(X,\omega),d_1)$ is complete, we infer the existence of $\f \in \E^1_{norm}(X,\omega)$
such that $d_1(\f_j,\f) \rightarrow 0$.

Now $I_2(\f_j,0)$ is bounded and $MA(\f_j)$ weakly converges to $MA(\f)$
(by \cite[Proposition 1.6]{BBEGZ}). It follows therefore from Fatou's and Hartogs' lemma that 
$\f \in \E^2_{norm}(X,\omega)$.

One would now like to prove that $I_2(\f_j,\f) \rightarrow 0$ and conclude that the space 
$(\E^2_{norm}(X,\omega),I_2)$ is complete,
arguing as in \cite[Proposition 2.4]{BBEGZ}.
There is an abuse of terminology here as we haven't  checked that $I_2$ induces a uniform structure.
This would easily follow from Proposition \ref{prop:python} if one could
show that $I_2$ satisfies a quasi-triangle inequality 
(like $I$ does, see \cite[Theorem 1.8]{BBEGZ}).

\begin{lem} \label{lem:controleL2}
Let $\f,\p$ be bounded $\omega$-psh functions and $S$ be a positive closed current of bidimension $(1,1)$
on $X$. If $\f \leq \p$, then
$$
\int_X (\f-\p)^2 \omega_\p \wedge S \leq \int_X (\f-\p)^2 \omega_\f \wedge S.
$$
In particular for all $0 \leq j \leq n$,
$$
V_\a^{-1} \int_X (\f-\p)^2 \omega_\p^j \wedge \omega_\f^{n-j}
\leq \int_X (\f-\p)^2 MA(\f).
$$
\end{lem}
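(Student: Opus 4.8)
The plan is to prove the first inequality — $\int_X (\f-\p)^2 \omega_\p \wedge S \leq \int_X (\f-\p)^2 \omega_\f \wedge S$ when $\f \leq \p$ — by writing $\omega_\p - \omega_\f = dd^c(\p-\f)$ and integrating by parts. Set $u := \p - \f \geq 0$, which is a bounded nonnegative quasi-psh function (difference of bounded $\omega$-psh functions). Then the difference of the two integrals is $-\int_X u^2\, dd^c u \wedge S$, and the goal reduces to showing
\begin{equation*}
\int_X u^2 \, dd^c u \wedge S \leq 0.
\end{equation*}
Integrating by parts once gives $\int_X u^2\, dd^c u \wedge S = -\int_X d(u^2) \wedge d^c u \wedge S = -2\int_X u\, du \wedge d^c u \wedge S$, and since $du \wedge d^c u \wedge S \geq 0$ as a positive measure (this is where positivity of $S$ as a closed $(1,1)$-current enters, together with $u$ bounded so the wedge product is Bedford–Taylor admissible) and $u \geq 0$, the right-hand side is $\leq 0$. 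That settles the first claim.

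The main technical point to be careful about is the justification of the integration by parts: $u$ is only bounded quasi-psh, not smooth, so one should either approximate $u$ by a decreasing sequence of smooth functions and invoke continuity of the relevant mixed Monge–Ampère-type expressions along monotone sequences (Bedford–Taylor), or cite the by-parts formula for bounded quasi-psh potentials against a positive closed current directly. Since $S$ has bidimension $(1,1)$, all products appearing ($dd^c u \wedge S$, $du \wedge d^c u \wedge S$) are well-defined measures in the Bedford–Taylor sense, and the boundary-free integration by parts on the compact manifold $X$ is standard. This approximation/regularity bookkeeping is the only real obstacle; the algebraic identity itself is immediate.

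For the ``in particular'' statement, the strategy is to iterate the first inequality. Given bounded $\omega$-psh $\f \leq \p$ and $0 \leq j \leq n$, apply the first inequality repeatedly, each time replacing one factor of $\omega_\f$ by $\omega_\p$: taking $S = \omega_\p^{k} \wedge \omega_\f^{n-1-k}$ (which is a positive closed current of bidimension $(1,1)$) for $k = 0, 1, \dots, j-1$ in turn yields
\begin{equation*}
\int_X (\f-\p)^2 \omega_\p^j \wedge \omega_\f^{n-j} \leq \int_X (\f-\p)^2 \omega_\p^{j-1} \wedge \omega_\f^{n-j+1} \leq \cdots \leq \int_X (\f-\p)^2 \omega_\f^n,
\end{equation*}
and dividing by $V_\a$ gives the claimed bound $V_\a^{-1}\int_X (\f-\p)^2 \omega_\p^j \wedge \omega_\f^{n-j} \leq \int_X (\f-\p)^2 MA(\f)$. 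The only thing to check along the way is that each intermediate current $\omega_\p^{k}\wedge\omega_\f^{n-1-k}$ is indeed positive and closed, which is immediate since $\omega_\p, \omega_\f \geq 0$ are closed and wedge products of bounded-potential positive closed currents are positive and closed by Bedford–Taylor theory.
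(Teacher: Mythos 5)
Your proof is correct and follows essentially the same route as the paper: the first inequality is obtained by writing the difference of the two integrals as $2\int_X(\p-\f)\,d(\f-\p)\wedge d^c(\f-\p)\wedge S\geq 0$ via Stokes/integration by parts (your computation with $u=\p-\f$ is the same identity), and the second follows by iterating with $S=\omega_\p^{k}\wedge\omega_\f^{n-1-k}$. Your extra remarks on the Bedford--Taylor justification of the integration by parts for bounded potentials are appropriate but do not change the argument.
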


\begin{proof}
By Stokes theorem,
$$
\int_X (\f-\p)^2 \omega_\f \wedge S-\int_X (\f-\p)^2 \omega_\p \wedge S
=2\int_X (\p-\f) d(\f-\p) \wedge d^c (\f-\p) \wedge S
$$
is non-negative if $(\p-\f) \geq 0$. 

The second assertion follows by applying the first one  inductively.
\end{proof}

\begin{prop} \label{prop:python}
For $\f,\p \in \E^2(X,\omega)$,
$$
I_2(\f,\p)^2=I_2(\f,\max(\f,\p))^2+I_2(\max(\f,\p),\p)^2.
$$
\end{prop}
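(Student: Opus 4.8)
The identity to prove is a Pythagoras-type relation for the quadratic form $I_2$. Let me write $\mu := \max(\f,\p)$. The plan is to expand all three quantities, reduce to bounded potentials by the canonical-approximant/monotone-convergence machinery, and then partition $X$ into the two plurifine-open sets $\{\f \geq \p\}$ and $\{\f < \p\}$, on each of which one of $\f,\p$ agrees with $\mu$, and apply the Bedford–Taylor maximum principle $(MP)$ together with Lemma~\ref{lem:controleL2}.

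First I would reduce to the bounded case: for $\f,\p\in\E^2(X,\omega)$ with canonical approximants $\f_j=\max(\f,-j)$, $\p_j=\max(\p,-j)$, we have $\max(\f_j,\p_j)=\max(\f,\p)_j$ decreasing to $\mu$, and each of the three squared quantities is an integral of a bounded continuous integrand against a combination of Monge–Amp\`ere measures; using that $MA(\f_j)\to MA(\f)$, $MA(\p_j)\to MA(\p)$ weakly (since $\f,\p,\mu\in\E^1(X,\omega)$) and the finiteness of $I_2$ on $\E^2$ from \cite[Proposition 3.6]{GZ07}, one checks the three quantities converge, so it suffices to prove the identity for bounded $\omega$-psh $\f,\p$. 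Alternatively one may cite directly that all the relevant integrals are continuous along the canonical approximants; either way this is the routine part.

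For bounded potentials, write $E:=\{\f\geq\p\}$ (a plurifine Borel set) and $F:=\{\f<\p\}$. On $F$ one has $\mu=\p$ and $(\mu-\f)^2=(\p-\f)^2=(\f-\mu)^2+(\mu-\p)^2$ trivially on the complement too; the real content is the behavior of the measures. By $(MP)$, $\mathbf 1_F\,MA(\mu)=\mathbf 1_F\,MA(\p)$ and $\mathbf 1_{\{\f>\p\}}\,MA(\mu)=\mathbf 1_{\{\f>\p\}}\,MA(\f)$, and on $\{\f=\p\}$ the integrand $(\f-\mu)(\mu-\p)$ vanishes. Now expand: on the left, $I_2(\f,\p)^2=\tfrac12\int_X(\f-\p)^2\,MA(\f)+\tfrac12\int_X(\f-\p)^2\,MA(\p)$; split each integral over $E\cup F$. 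On the right, $I_2(\f,\mu)^2+I_2(\mu,\p)^2=\tfrac12\int(\f-\mu)^2[MA(\f)+MA(\mu)]+\tfrac12\int(\mu-\p)^2[MA(\mu)+MA(\p)]$. Using $(\f-\mu)=0$ on $E$ and $(\mu-\p)=0$ on $F$, and replacing $\mathbf 1_F MA(\mu)$ by $\mathbf 1_F MA(\p)$ and $\mathbf 1_E MA(\mu)$ by $\mathbf 1_E MA(\f)$ via $(MP)$, each term on the right matches a term on the left after the decomposition $(\f-\p)^2=(\f-\mu)^2+(\mu-\p)^2$ (valid pointwise since $\mu\in\{\f,\p\}$ pointwise). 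Collecting terms gives the identity.

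The main obstacle is the careful bookkeeping of which Monge–Amp\`ere measure lives on which piece: $(MP)$ gives control on the plurifine-open sets $\{\f>\p\}$ and $\{\f<\p\}$, but one must also verify that the "boundary" set $\{\f=\p\}$ contributes nothing — here the integrand $(\f-\mu)(\mu-\p)$ in the cross term vanishes identically on $\{\f=\p\}$, so this is automatic, and in fact the decomposition $(\f-\p)^2=(\f-\mu)^2+(\mu-\p)^2$ holds at every point of $X$ without needing to know the measures there. So the only genuinely delicate point is the reduction to the bounded case, where one must be sure that $(\mu-\p)^2$ and $(\f-\mu)^2$ are integrable against all the Monge–Amp\`ere measures involved — this follows from Lemma~\ref{lem:controleL2} applied to $\f\leq\mu$ and $\p\leq\mu$ together with the hypothesis $\f,\p\in\E^2(X,\omega)$, which bounds $\int(\f-\p)^2\,MA(\f)$ and $\int(\f-\p)^2\,MA(\p)$ and hence, after the pointwise split, all the pieces.
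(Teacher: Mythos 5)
Your proof is correct and follows essentially the same route as the paper: the identity $\mathbf 1_{\{\f<\p\}}MA(\max(\f,\p))=\mathbf 1_{\{\f<\p\}}MA(\p)$ from $(MP)$, the vanishing of $(\f-\max(\f,\p))^2$ on $\{\f\geq\p\}$ (and symmetrically), and the pointwise decomposition of $(\f-\p)^2$ over $\{\f<\p\}\cup\{\f>\p\}$. The only difference is presentational: the paper applies $(MP)$ directly in the class $\E(X,\omega)$ (where it is recalled earlier to remain valid), rather than reducing to bounded potentials by monotone approximation as you do.
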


\begin{proof}
Recall that the maximum principle insures that
$$
\1_{\{ \f<\p\}} MA(\max(\f,\p))= \1_{\{ \f<\p\}}MA(\p),
$$
while $(\f-\max(\f,\p))^2=0$ on $(\f \geq \p)$, thus
$$
I_2(\f,\max(\f,\p))^2=\frac{1}{2}\int_{\{\f<\p \}} (\f-\p)^2 \left[ MA(\f)+MA(\p)\right].
$$
Similarly
$$
I_2(\p,\max(\f,\p))^2=\frac{1}{2}\int_{\{\f>\p \}} (\f-\p)^2 \left[ MA(\f)+MA(\p)\right]
$$
and the result follows since 
$$
I_2(\f,\p)^2=\frac{1}{2}\int_{\{\f \neq \p \}} (\f-\p)^2 \left[ MA(\f)+MA(\p)\right].
$$
\end{proof}

\subsection{Comparing distances}

\subsubsection{Comparing $d$ and $I$}

The following lower bound is the first key observation for understanding the
metric completion of $(\HH,d)$:

\begin{prop} \label{prop:lowerbound}
For all $\f_0,\f_1 \in \HH$, 
\begin{eqnarray*}
\frac{1}{n+1} I(\f_0,\f_1) \! \! \! &+&  \! \! \!
\max\left\{ \int_X (\f_0-\f_1) MA(\f_0) ;  \int_X (\f_1-\f_0) MA(\f_1) \right\} \\
& \leq&  d(\f_0,\f_1).
\end{eqnarray*}
\end{prop}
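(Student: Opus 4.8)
The plan is to estimate the length of an arbitrary path $(\f_t)_{t\in[0,1]}$ joining $\f_0$ to $\f_1$ in $\HH$ from below by the two ingredients on the left-hand side, and then take the infimum over paths. The natural tool is the Aubin-Mabuchi functional $E$ and its twisted variants, since these are exactly the functionals whose variation along a path involves $\int_X \dot\f_t\,MA(\f_t)$ and hence is controlled by $\ell(\f)$ via Cauchy-Schwarz: indeed, for any path, $\bigl|\frac{d}{dt}E(\f_t)\bigr| = \bigl|\int_X \dot\f_t\,MA(\f_t)\bigr| \le \bigl(\int_X \dot\f_t^2\,MA(\f_t)\bigr)^{1/2} = |\dot\f_t|$, so $|E(\f_1)-E(\f_0)| \le \ell(\f)$, and likewise for functionals built from $\omega_{\f_t}^j\wedge\omega^{n-j}$ in place of $MA(\f_t)$.

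First I would recall, following \cite{BBGZ,BBEGZ}, that $I(\f_0,\f_1)$ admits an expression as a sum of differences of Aubin-Mabuchi-type energies: writing $E_\psi$ for the energy functional "based at $\psi$" (so that $\frac{d}{dt}E_{\f_1}(\f_t) = \int_X \dot\f_t\,\omega_{\f_t}^{\,?}\wedge\cdots$), one has up to the factor $\frac{1}{n+1}$ that $I(\f_0,\f_1)$ is a telescoping sum $\sum_{j} \bigl[F_j(\f_1)-F_j(\f_0)\bigr]$ of monotone functionals each of whose derivative along $(\f_t)$ is $\int_X \dot\f_t\, \omega_{\f_t}^j\wedge\omega_{\f_1}^{n-j}$ or similar. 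Applying Cauchy-Schwarz termwise, using that each such mixed measure has total mass $V_\a$, gives $\frac{1}{n+1}I(\f_0,\f_1)\le\ell(\f)$ — but this is wasteful, since it ignores the cancellation. The sharper route is to combine two such telescoping estimates so that the "pure $MA(\f_0)$" (respectively "pure $MA(\f_1)$") term survives with coefficient one: concretely, one writes $\frac{1}{n+1}I(\f_0,\f_1) + \int_X(\f_0-\f_1)MA(\f_0)$ as a single increasing functional evaluated between $\f_0$ and $\f_1$ — essentially $E(\f_0)-E(\f_1)$ shifted by the cocycle formula of Lemma \ref{lem:AubinMabuchi1} — whose $t$-derivative is again of the form $\int_X \dot\f_t\,d\mu_t$ for a probability measure $\mu_t$, hence bounded by $|\dot\f_t|$.

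Carrying this out: using the cocycle identity, $E(\f_0)-E(\f_1) = \frac{1}{(n+1)V_\a}\sum_{j=0}^n\int_X(\f_0-\f_1)\,\omega_{\f_0}^j\wedge\omega_{\f_1}^{n-j}$, and $I(\f_0,\f_1)=\int_X(\f_0-\f_1)(MA(\f_1)-MA(\f_0))$; a direct algebraic manipulation shows $\frac{1}{n+1}I(\f_0,\f_1)+\int_X(\f_1-\f_0)MA(\f_1) = E(\f_0)-E(\f_1)+\int_X(\f_1-\f_0)MA(\f_1) - \bigl[\text{extra nonneg. terms}\bigr]$, which one recognizes — via Lemma \ref{lem:controleL2}-type monotonicity of mixed Monge-Amp\`ere integrals — as $G(\f_1)-G(\f_0)$ for a concave monotone functional $G$ along affine, hence along all, paths with $|G'(\f_t)|\le|\dot\f_t|$; integrating gives $\le\ell(\f)$. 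The symmetric choice gives the other term in the $\max$, and taking the infimum over $\f$ yields the claim. The main obstacle I anticipate is the bookkeeping in the second paragraph: getting the combinatorial coefficients of the mixed terms to line up so that exactly one copy of $\int_X(\f_0-\f_1)MA(\f_0)$ is left over with the right sign while all discarded terms are genuinely nonnegative — this is where the convexity/monotonicity inequalities of Lemma \ref{lem:controleL2} and Proposition \ref{pro:GZ3.8} must be invoked carefully, and where one should double-check the normalization by $V_\a$.
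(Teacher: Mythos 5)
Your overall architecture coincides with the paper's: reduce the statement to the inequality $\frac{1}{n+1}I(\f_0,\f_1)+\int_X(\f_0-\f_1)MA(\f_0)\le E(\f_0)-E(\f_1)$, then bound $E(\f_0)-E(\f_1)$ by $d(\f_0,\f_1)$. Your first paragraph handles the second reduction correctly, and in fact more economically than the paper does: since $dE=MA$ and $MA(\f_t)$ is a probability measure, Cauchy--Schwarz gives $\bigl|\frac{d}{dt}E(\f_t)\bigr|\le \bigl(\int_X\dot{\f}_t^2\,MA(\f_t)\bigr)^{1/2}$ along \emph{any} path, so $|E(\f_1)-E(\f_0)|\le\ell(\f)$ and one takes the infimum over paths; the paper instead evaluates along Chen's geodesic using that its speed is constant, which requires Theorem \ref{thm:chen}. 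Your variant removes that dependence, which is a genuine (if small) improvement.

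However, the central algebraic inequality is not actually established in your write-up, and this is where the real content of the proposition lies. The phrases ``a direct algebraic manipulation shows'', the unspecified bracket of ``extra nonnegative terms'', and the unidentified functional $G$ are placeholders for exactly the computation you flag as the obstacle, and neither Lemma \ref{lem:controleL2} nor Proposition \ref{pro:GZ3.8} is the tool that closes it. What is needed is the integration-by-parts (Stokes) identity: writing $u=\f_1-\f_0$, one has $\omega_{\f_0}^j\wedge\omega_{\f_1}^{n-j}-\omega_{\f_0}^n=dd^c u\wedge\sum_{\ell=0}^{n-j-1}\omega_{\f_1}^\ell\wedge\omega_{\f_0}^{n-1-\ell}$, whence, after Stokes and summation over $j$,
$$
E(\f_0)-E(\f_1)-\int_X(\f_0-\f_1)\,MA(\f_0)=\frac{1}{(n+1)V_\a}\sum_{\ell=0}^{n-1}(n-\ell)\int_X du\wedge d^c u\wedge\omega_{\f_1}^\ell\wedge\omega_{\f_0}^{n-1-\ell},
$$
while the same manipulation gives $I(\f_0,\f_1)=\frac{1}{V_\a}\sum_{\ell=0}^{n-1}\int_X du\wedge d^c u\wedge\omega_{\f_1}^\ell\wedge\omega_{\f_0}^{n-1-\ell}$. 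Both quantities are positive combinations of the \emph{same} nonnegative gradient terms, with coefficients $(n-\ell)/(n+1)\ge 1/(n+1)$ for $0\le\ell\le n-1$; comparing termwise yields the desired inequality, the factor $1/(n+1)$ coming exactly from the worst case $\ell=n-1$. With this identity supplied your argument is complete; without it, the proof has a hole at its key step.
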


\begin{proof}
The cocycle condition (see Lemma \ref{lem:AubinMabuchi1}) yields
\begin{eqnarray*}
\lefteqn{E(\f_0)-E(\f_1)-\int_X (\f_0-\f_1) MA(\f_0) }\\
&=&\frac{1}{(n+1)V_\a} \sum_{j=1}^n \sum_{\ell=0}^{j-1}
\int_X d(\f_1-\f_0) \wedge d^c (\f_1-\f_0) \wedge \omega_{\f_1}^\ell \wedge
\omega_{\f_0}^{n-\ell-1}  \\
&=& \frac{1}{(n+1)V_\a} \sum_{\ell=0}^{n-1} (n-\ell)
\int_X d(\f_1-\f_0) \wedge d^c (\f_1-\f_0) \wedge \omega_{\f_1}^\ell \wedge
\omega_{\f_0}^{n-\ell-1} 
\end{eqnarray*}
while
\begin{eqnarray*}
I(\f_0,\f_1)&=&\int_X (\f_0-\f_1) \left(MA(\f_1)-MA(\f_0) \right) \\
&=& \frac{1}{V_\a} \sum_{\ell=0}^{n-1}
 \int_X d(\f_1-\f_0) \wedge d^c (\f_1-\f_0) \wedge \omega_{\f_1}^\ell \wedge
\omega_{\f_0}^{n-\ell-1},
\end{eqnarray*}
thus
$$
\frac{1}{n+1}I(\f_0,\f_1) +\int_X (\f_0-\f_1) MA(\f_0) \leq E(\f_0)-E(\f_1).
$$

Let $(\f_t)_{t \in [0,1]}$ be the geodesic joining $\f_0$ to $\f_1$.
Recall from Lemma \ref{lem:AubinMabuchi1} that $t \mapsto E(\f_t)$ is affine.
Since $E$ is a primitive of the (normalized) complex Monge-Amp\`ere, we infer
$$
E(\f_1)-E(\f_0)=\int_0^1 \int_X \dot{\f_t} MA(\f_t) dt.
$$
By Chen's theorem the  length of this geodesic equals $d(\f_0,\f_1)$, while by definition
the Riemannian energy 
$$
t \mapsto \int_X (\dot{\f_t})^2 MA(\f_t)
$$
is constant hence equals $d(\f_0,\f_1)^2$. 
Cauchy-Schwarz inequality thus yields
$$
 E(\f_0)-E(\f_1) \leq \sqrt{\int_X (\dot{\f_t})^2 MA(\f_t)}
=d(\f_0,\f_1),
$$
proving that
$$
\frac{1}{n+1} I(\f_0,\f_1) +\int_X (\f_0-\f_1) MA(\f_0) 
 \leq d(\f_0,\f_1).
$$
The desired bound follows by reversing the roles of $\f_0$ and $\f_1$.
\end{proof}

\subsubsection{Comparing $d$ and $I_2$}

\begin{prop} \label{prop:upperbound}
For all $\f_0,\f_1 \in \HH$, 
$$
0 \leq d(\f_0,\f_1)  \leq 2 I_2(\f_0,\f_1).
$$
Moreover if $\f_0 \leq \f_1$ then $ I_2(\f_0,\f_1) \leq \sqrt{ \int_X (\f_1-\f_0)^2 MA(\f_0)}$ and
$$
 d(\f_0,\f_1)  \leq  \sqrt{ \int_X (\f_1-\f_0)^2 MA(\f_0)}  \leq   2^{2+n/2} d(\f_0,\f_1).
$$
\end{prop}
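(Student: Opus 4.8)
The plan is to estimate the Mabuchi length of a well-chosen path between $\f_0$ and $\f_1$ by $I_2$, and to handle the general case by reducing to the ordered case $\f_0 \le \f_1$ via the ``min trick''. First I would treat the \emph{ordered} case. Suppose $\f_0 \le \f_1$ and consider the affine path $\f_t = (1-t)\f_0 + t\f_1$, which lies in $\H$ (by convexity) and has velocity $\dot\f_t = \f_1 - \f_0$. Then $\ell(\f) = \int_0^1 \sqrt{\int_X (\f_1-\f_0)^2 MA(\f_t)}\, dt$, so $d(\f_0,\f_1) \le \sup_{t\in[0,1]} \sqrt{\int_X (\f_1-\f_0)^2 MA(\f_t)}$. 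Expanding $\omega_{\f_t}^n = ((1-t)\omega_{\f_0} + t\,\omega_{\f_1})^n$ by the binomial formula gives a convex combination of the mixed terms $V_\a^{-1}\int_X (\f_1-\f_0)^2 \omega_{\f_0}^j\wedge\omega_{\f_1}^{n-j}$, each of which is $\le \int_X(\f_1-\f_0)^2 MA(\f_0)$ by Lemma \ref{lem:controleL2} (using $\f_0 \le \f_1$). Hence $d(\f_0,\f_1) \le \sqrt{\int_X (\f_1-\f_0)^2 MA(\f_0)}$. The same binomial expansion, now bounding each mixed term below by $\int_X(\f_1-\f_0)^2 MA(\f_1)$ (again Lemma \ref{lem:controleL2}, reversing the roles), shows $\int_X(\f_1-\f_0)^2 MA(\f_1) \le \int_X(\f_1-\f_0)^2 MA(\f_t)$, and in particular $I_2(\f_0,\f_1)^2 = \tfrac12\int_X(\f_1-\f_0)^2[MA(\f_0)+MA(\f_1)] \le \int_X(\f_1-\f_0)^2 MA(\f_0)$, giving the stated bound $I_2(\f_0,\f_1)\le\sqrt{\int_X(\f_1-\f_0)^2 MA(\f_0)}$.

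For the last chain of inequalities in the ordered case, I still need the lower bound $d(\f_0,\f_1) \ge 2^{-2-n/2}\sqrt{\int_X(\f_1-\f_0)^2 MA(\f_0)}$. Here I would invoke Theorem \ref{thm:chen}: $d(\f_0,\f_1) = \sqrt{\int_X \dot\f_t^2\, MA(\f_t)}$ evaluated at any $t$, in particular at the \emph{endpoint} of the genuine (almost $\mathcal C^{1,1}$) geodesic $(\f_t)$. Since $E$ is affine along the geodesic and nondecreasing, and since for a geodesic the function $t\mapsto \f_t$ is convex in $t$ (this is standard: the associated function on $X\times A$ is $\omega$-psh, so $\ddot\f_t \ge 0$ after the reduction in the proof of Proposition \ref{prop:Semmes}), one compares $\f_0,\f_1$ with the geodesic at the endpoint. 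Concretely, $d(\f_0,\f_1)^2 = \int_X \dot\f_1^2 MA(\f_1)$ and one needs to control $\int_X(\f_1-\f_0)^2 MA(\f_0)$ by this. Using convexity $\f_1 - \f_0 \le \dot\f_1$ and $\f_1-\f_0 \ge \dot\f_0$ pointwise, plus the comparison of $MA(\f_0)$ with $MA(\f_1)$ through the intermediate measures $MA(\f_t)$ (Lemma \ref{lem:controleL2} applied along the ordered pieces), one loses at worst a factor $2^{n}$ in the measures and a factor from switching $\dot\f_0$ to $\dot\f_1$; collecting these gives the exponent $2+n/2$. I expect this is the step requiring the most care — tracking the exact constant through the binomial expansion and the convexity estimates — but no genuinely new idea is needed beyond Lemma \ref{lem:controleL2} and Theorem \ref{thm:chen}.

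Finally, for the \emph{general} (unordered) case $0 \le d(\f_0,\f_1) \le 2 I_2(\f_0,\f_1)$, I would use the triangle inequality for $d$ through the midpoint-in-the-lattice-sense element $\max(\f_0,\f_1)$, together with Darvas's observation (announced in the introduction of Section \ref{sec:geod}) that $d$ decreases under the max/min operation. Write $d(\f_0,\f_1) \le d(\f_0,\max(\f_0,\f_1)) + d(\max(\f_0,\f_1),\f_1)$. Each term is now an \emph{ordered} pair, so the ordered case gives $d(\f_0,\max(\f_0,\f_1)) \le \sqrt{\int_X(\max(\f_0,\f_1)-\f_0)^2 MA(\f_0)}$ and similarly for the other term; but by the maximum principle $\mathbf 1_{\{\f_0<\f_1\}}MA(\max(\f_0,\f_1)) = \mathbf 1_{\{\f_0<\f_1\}}MA(\f_1)$, so these are exactly $\sqrt{2}\, I_2(\f_0,\max(\f_0,\f_1))$ and $\sqrt2\, I_2(\max(\f_0,\f_1),\f_1)$ up to the bookkeeping already done. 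By the Pythagorean identity of Proposition \ref{prop:python}, $I_2(\f_0,\max(\f_0,\f_1))^2 + I_2(\max(\f_0,\f_1),\f_1)^2 = I_2(\f_0,\f_1)^2$, and $\sqrt a + \sqrt b \le \sqrt{2(a+b)}$ yields $d(\f_0,\f_1) \le 2 I_2(\f_0,\f_1)$. The one point needing justification is that $\f_0,\f_1 \in \H$ smooth forces $\max(\f_0,\f_1)\in\H_{1,1}$ and that $d$ extends to (and the ordered estimate holds on) such potentials — this is exactly the content of the Berman–Demailly extension recalled in the Remark after Theorem \ref{thm:chen}, which I would cite.
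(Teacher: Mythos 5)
Your treatment of the two ordered upper bounds ($d(\f_0,\f_1)\le\sqrt{\int_X(\f_1-\f_0)^2 MA(\f_0)}$ via the affine path and the binomial expansion, and $I_2\le\sqrt{\int_X(\f_1-\f_0)^2MA(\f_0)}$ via Lemma \ref{lem:controleL2}) and of the general case via the triangle inequality through $\max(\f_0,\f_1)$, Proposition \ref{prop:python} and $\sqrt a+\sqrt b\le\sqrt{2(a+b)}$ is correct and essentially identical to the paper's argument (the paper in fact gives two proofs of the ordered upper bound, one using Chen's theorem and the convexity inequalities $0\le\dot\f_0\le\f_1-\f_0\le\dot\f_1$, and one using the affine path exactly as you do).

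The genuine gap is in the remaining inequality $\sqrt{\int_X(\f_1-\f_0)^2 MA(\f_0)}\le 2^{2+n/2}d(\f_0,\f_1)$, which is the crux of the proposition. Your sketch proposes to pass from $\int_X(\f_1-\f_0)^2MA(\f_1)\le\int_X(\dot\f_1)^2MA(\f_1)=d(\f_0,\f_1)^2$ to the corresponding bound against $MA(\f_0)$ by ``comparing $MA(\f_0)$ with $MA(\f_1)$ through the intermediate measures, losing at worst a factor $2^n$''. No such comparison exists: Lemma \ref{lem:controleL2} only gives $\int(\f_1-\f_0)^2MA(\f_1)\le\int(\f_1-\f_0)^2MA(\f_0)$ when $\f_0\le\f_1$, i.e.\ the \emph{wrong} direction for this step, and there is no pointwise domination $MA(\f_0)\le C\,MA(\f_1)$ in general (think of $\omega_{\f_1}$ degenerating on the support of $MA(\f_0)$). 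The actual argument — the ``trick due to Darvas'' that the paper highlights — requires three ingredients you do not supply: (a) replace $\f_1$ by the \emph{midpoint} $\frac{\f_0+\f_1}{2}$, for which one does have the pointwise bound $MA(\f_0)\le 2^n\,MA\left(\frac{\f_0+\f_1}{2}\right)$ since $\omega_{\f_0}\le 2\,\omega_{(\f_0+\f_1)/2}$; (b) apply the already-established lower bound to the ordered pair $\f_0\le\frac{\f_0+\f_1}{2}$, giving $\int_X\left(\f_0-\frac{\f_0+\f_1}{2}\right)^2MA\left(\frac{\f_0+\f_1}{2}\right)\le d\left(\f_0,\frac{\f_0+\f_1}{2}\right)^2$; and (c) Lemma \ref{lem:dar}, i.e.\ $d\left(\f_1,\frac{\f_0+\f_1}{2}\right)\le d(\f_0,\f_1)$, proved by comparing the endpoint velocities of the two geodesics ending at $\f_1$, which via the triangle inequality yields $d\left(\f_0,\frac{\f_0+\f_1}{2}\right)\le 2\,d(\f_0,\f_1)$. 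Assembling (a)--(c) produces the constant $2^{n+4}=\left(2^{2+n/2}\right)^2$; without the midpoint and Lemma \ref{lem:dar} the constant-chasing you describe cannot be carried out.
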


The last upper-bound uses a nice trick due to Darvas \cite{Dar14} as we explain below.
It would be very useful if this upper bound would hold without any restriction, thus showing that
the topology induced by $I_2$ and $d$ are equivalent.

\begin{proof} 
We first assume that $\f_0 \leq \f_1$. 
The inequality 
$$
 I_2(\f_0,\f_1) \leq \sqrt{ \int_X (\f_1-\f_0)^2 MA(\f_0)}
$$ 
follows from Lemma \ref{lem:controleL2}.
Let 
$(\f_t)$ be the geodesic joining $\f_0$ to $\f_1$. It follows from the maximum principle that
$t \mapsto \f_t$ is increasing. As it is convex as well, we infer
$$
0 \leq \dot{\f}_0 \leq \f_1-\f_0 \leq \dot{\f}_1
$$
hence 
$$
\int_X (\f_1-\f_0)^2 MA(\f_1) \leq \int_X (\dot{\f}_1)^2 MA(\f_1) =
d(\f_0,\f_1)^2
$$
and similarly $d(\f_0,\f_1)^2 \leq \int_X (\f_1-\f_0)^2 MA(\f_0)$.

We give an alternative proof of this upper bound which does not use Chen's result. This might be of interest in more singular contexts.
We can join $\f_0$ to $\f_1$ by a straight line $\p_t=t\f_1+(1-t)\f_0$, thus
\begin{eqnarray*}
d(\f_0,\f_1) &\leq& \ell(\f)=\int_0^1 \sqrt{\int_X (\f_1-\f_0)^2 MA(\f_t)} dt \\
&\leq& \sqrt{\int_0^1\int_X (\f_1-\f_0)^2 MA(\f_t) dt},
\end{eqnarray*}
by Cauchy-Schwarz inequality. Now
$$
MA(\f_t)=V_\a^{-1} \sum_{j=0}^n \left( \begin{array}{c} n \\ j \end{array} \right) t^j (1-t)^{n-j} 
\omega_{\f_1}^j \wedge \omega_{\f_0}^{n-j}
$$
and for all $0 \leq j \leq n$,
$$
\int_0^1 t^j (1-t)^{n-j} dt=\frac{\left( \begin{array}{c} n \\ j \end{array} \right)^{-1}}{n+1},
$$
hence 
$$
 \frac{1}{(n+1)V_\a} \sum_{j=0}^n  
\int_X (\f_1-\f_0)^2 \omega_{\f_1}^j \wedge \omega_{\f_0}^{n-j} \leq \int_X (\f_1-\f_0)^2 MA(\f_0),
$$
as follows from Lemma \ref{lem:controleL2}, yielding
$$
d(\f_0,\f_1) \leq \sqrt{\int_X (\f_1-\f_0)^2 MA(\f_0)}.
$$

\smallskip

We now show that 
$ \int_X (\f_1-\f_0)^2 MA(\f_0)   \leq   2^{n+4} d(\f_0,\f_1)^2$.
Observe that $\frac{\f_0+\f_1}{2} \in \H_\omega$ with 
$$
 MA(\f_0)   \leq  2^n \, MA\left( \frac{\f_0+\f_1}{2} \right) 
$$ 
hence 
\begin{eqnarray*}
\int_X (\f_0-\f_1)^2 MA(\f_0) &=&4 \int_X \left(\f_0-\frac{\f_0+\f_1}{2} \right)^2 MA(\f_0) \\
& \leq & 2^{n+2}\int_X   \left(\f_0-\frac{\f_0+\f_1}{2} \right)^2  MA\left( \frac{\f_0+\f_1}{2} \right)  \\
& \leq & 2^{n+2} d\left(\f_0,\frac{\f_0+\f_1}{2} \right)^2,
\end{eqnarray*}
as follows from the first step of the proof since  $\f_0 \leq   \f_1 $.
The triangle inequality and Lemma \ref{lem:dar} below yield
$$
d\left(\f_0,\frac{\f_0+\f_1}{2} \right) \leq d(\f_0,\f_1)+d\left(\f_1,\frac{\f_0+\f_1}{2} \right) 
\leq 2 \, d(\f_0, \f_1)
$$
hence 
$$
\int_X (\f_0-\f_1)^2 MA(\f_0) \leq    2^{n+4} d(\f_0,0)^2.
$$

\smallskip

We finally treat the first upper bound of the Proposition which does not require $\f_0$ to lie below $\f_1$. It follows from the triangle inequality that
\begin{eqnarray*}
\lefteqn{d(\f_0,\f_1) \leq  d(\f_0,\max(\f_0,\f_1))+d(\max(\f_0,\f_1), \f_1) }\\
&\leq & \sqrt{\int_{\{\f_0<\f_1\}} (\f_1-\f_0)^2 MA(\f_0)}+
\sqrt{\int_{\{\f_0>\f_1\}} (\f_1-\f_0)^2 MA(\f_1)} \\
&\leq& \sqrt{2} \, \sqrt{ \int_X (\f_1-\f_0)^2 \left[ MA(\f_0)+MA(\f_1)\right]} ,
\end{eqnarray*}
by using the elementary inequality $\sqrt{a}+\sqrt{b} \leq \sqrt{2}\sqrt{a+b}$. 
\end{proof}

The following observation is due to Darvas \cite{Dar14}:

\begin{lem} \label{lem:dar}
 Assume $\f_0,\f_1 \in \H$ satisfy $\f_0 \leq \f_1$. Then
$$
d\left(\f_1, \frac{\f_0+\f_1}{2} \right)  \leq  d(\f_0, \f_1).
$$
\end{lem}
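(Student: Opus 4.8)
The plan is to exploit the "min operation decreases distance" principle that Darvas observed and which the author announced would be discussed: if $u,v$ are geodesics then $t\mapsto d(\min(u_t,v_t),\text{something})$ behaves well, or more concretely that the min of two subgeodesics is again a subgeodesic. I would set $\psi_0=\f_0$ and $\psi_1=\f_1$ and consider two natural paths in $\H$ and compare them via min. The key idea: note that $\frac{\f_0+\f_1}{2}\leq \f_1$ and that the midpoint is obtained from $\f_1$ by "going down". Let me instead argue directly with the reflection trick. Consider the geodesic $(\f_t)_{t\in[0,1]}$ from $\f_0$ to $\f_1$; then $\f_{1/2}$ is its midpoint, so $d(\f_{1/2},\f_1)=\frac12 d(\f_0,\f_1)$. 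So it suffices to show $d\bigl(\f_1,\frac{\f_0+\f_1}{2}\bigr)\le d(\f_1,\f_{1/2})$, i.e. that the linear midpoint is closer to $\f_1$ than the geodesic midpoint is.

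First I would invoke convexity of the geodesic in the time variable together with monotonicity (from the maximum principle, as used in the proof of Proposition~\ref{prop:upperbound}): since $\f_0\le\f_1$, the geodesic $t\mapsto\f_t$ is increasing and convex, whence $\f_{1/2}\le\frac{\f_0+\f_1}{2}$ pointwise. Thus both $\f_{1/2}$ and $\tfrac{\f_0+\f_1}{2}$ lie between $\f_0$ and $\f_1$, and they are ordered: $\f_0\le\f_{1/2}\le\frac{\f_0+\f_1}{2}\le\f_1$. Now I would apply the comparison already established: by the ordered case of Proposition~\ref{prop:upperbound}, for any ordered pair $u\le v$ in $\H$ one has $d(u,v)\le\sqrt{\int_X(v-u)^2 MA(u)}$, and moreover $\sqrt{\int_X(v-u)^2 MA(v)}\le d(u,v)$; combining, $d(u,v)^2\le \int_X (v-u)^2 MA(u)$ and $d(u,v)^2 \ge \int_X (v-u)^2 MA(v)$. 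Apply the upper bound to $u=\tfrac{\f_0+\f_1}{2}$, $v=\f_1$ and the lower bound to $u=\f_{1/2}$, $v=\f_1$:
\begin{align*}
d\Bigl(\f_1,\tfrac{\f_0+\f_1}{2}\Bigr)^2 &\le \int_X\Bigl(\f_1-\tfrac{\f_0+\f_1}{2}\Bigr)^2 MA\Bigl(\tfrac{\f_0+\f_1}{2}\Bigr)
=\tfrac14\int_X(\f_1-\f_0)^2 MA\Bigl(\tfrac{\f_0+\f_1}{2}\Bigr),\\
d(\f_1,\f_{1/2})^2 &\ge \int_X(\f_1-\f_{1/2})^2 MA(\f_1).
\end{align*}
So it would be enough to compare $\tfrac14\int_X(\f_1-\f_0)^2 MA(\tfrac{\f_0+\f_1}{2})$ with $\int_X(\f_1-\f_{1/2})^2 MA(\f_1)$. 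This comparison of integrands is not obviously true pointwise, so a purely "crude inequality" route is risky.

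Given that, I expect the author's actual argument is slicker, and the honest plan is: reduce via the geodesic midpoint to showing $d(\f_1,\tfrac{\f_0+\f_1}{2})\le d(\f_1,\f_{1/2})$, then prove this by exhibiting a path from $\f_1$ to $\tfrac{\f_0+\f_1}{2}$ no longer than the geodesic half-segment, using that $\min$ of the two relevant subgeodesics is a subgeodesic. Concretely: let $(\f_t)$ be the geodesic from $\f_0$ to $\f_1$ and let $(\sigma_t)$ be the geodesic from $\tfrac{\f_0+\f_1}{2}$ to $\f_1$; since $\tfrac{\f_0+\f_1}{2}\ge\f_{1/2}$ (shown above) and both agree at $t=1$ with $\f_1$, the path $t\mapsto \min(\f_{(1+t)/2},\sigma_t)$ — or rather a suitable reparametrization — is a subgeodesic joining $\f_{1/2}$ to $\f_1$ that stays below $\sigma$, and the distance-decreasing property of $\min$ along geodesics (Darvas' observation, to be proved in Section~\ref{sec:geod}) gives the length bound. \textbf{The main obstacle} is exactly this: making precise and rigorous the statement that taking $\min$ of two (sub)geodesics does not increase the Mabuchi length, and checking that the endpoints and the ordering line up so that the triangle-type comparison closes. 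Once that lemma on $\min$ is available, the reduction via the geodesic midpoint and the pointwise inequality $\f_{1/2}\le\tfrac{\f_0+\f_1}{2}$ (from convexity of geodesics) finish the proof quickly.
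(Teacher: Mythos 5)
Your proposal does not close: you offer two routes and neither is completed. The first (pinching via the inequalities from Proposition~\ref{prop:upperbound} applied to the ordered pairs) you correctly abandon because the resulting integrals are against different Monge--Amp\`ere measures and cannot be compared pointwise. The second route is only a plan, and you yourself flag its key step -- that taking $\min$ of two subgeodesics does not increase Mabuchi length -- as ``the main obstacle''; that step is never proved, so the argument remains a sketch. Moreover, your reduction to $d\bigl(\f_1,\tfrac{\f_0+\f_1}{2}\bigr)\le d(\f_1,\f_{1/2})$ targets the strictly stronger bound $d\bigl(\f_1,\tfrac{\f_0+\f_1}{2}\bigr)\le\tfrac12 d(\f_0,\f_1)$, which is more than the lemma asks and makes the task needlessly harder.

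The idea you are missing is to compare the two geodesics at their \emph{common endpoint} $\f_1$, where the reference measure is the same for both. Let $(\f_t)$ be the geodesic from $\f_0$ to $\f_1$ and $(\p_t)$ the geodesic from $\tfrac{\f_0+\f_1}{2}$ to $\f_1$. Since $\f_0\le\tfrac{\f_0+\f_1}{2}$ and the two paths agree at $t=1$, the maximum principle gives $\f_t\le\p_t$ for all $t$, and both paths are nondecreasing in $t$. Writing $\f_t-\f_1\le\p_t-\p_1$ and dividing by $t-1<0$ yields, in the limit $t\to1^-$, the pointwise inequality $\dot\f_1\ge\dot\p_1\ge0$. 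By Chen's theorem both squared distances are computed by integrating the squared terminal velocities against the \emph{same} measure $MA(\f_1)=MA(\p_1)$, so
$$
d\Bigl(\f_1,\tfrac{\f_0+\f_1}{2}\Bigr)^2=\int_X(\dot\p_1)^2\,MA(\f_1)\le\int_X(\dot\f_1)^2\,MA(\f_1)=d(\f_0,\f_1)^2 .
$$
This one-line comparison of terminal velocities is the whole proof; no midpoint reduction and no $\min$-of-subgeodesics lemma is needed.
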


We include a proof for the convenience of the reader.

\begin{proof}
Let $\f_t$ (resp. $\p_t$) denote the geodesic joining $\f_0$ (resp.  $(\f_0+\f_1)/2$) to $\f_1$ .
Since $\f_0 \leq \f_1 $, it follows from the maximum principle that
$t \mapsto \f_t$, $t \mapsto \p_t$ are non decreasing and
$
{\f_t}\leq \p_t 
$
hence
$$
\frac{\f_t-\f_1}{t-1} \geq \frac{\p_t-\p_1}{t-1}
$$
since $\f_1=\p_1$.
Therefore
$$
\dot{\f}_1 \geq \dot{\p}_1 \geq 0
$$
and we infer
$$
 \int_X  (\dot{\p}_1)^2 MA(\p_1)=d\left(\f_1, \frac{\f_0+\f_1}{2} \right)^2  \leq  d(\f_0, \f_1)^2
 =\int_X  (\dot{\f}_1)^2 MA(\f_1).
$$
\end{proof}

\begin{rem}
The lower bound $\sqrt{ \int_X (\f_1-\f_0)^2 MA(\f_1)} \leq d(\f_0,\f_1) $ when 
$\f_0 \leq \f_1$ is a particular case of the lower bound obtained by Blocki in \cite{Blo12},
who showed -generalizing a previous lower bound due to Donaldson \cite{Don99} and Chen \cite{Chen00}-
that
$$
\sqrt{ \int_X (\f_1-\f_0)^2 MA(\max(\f_0,\f_1))}  \leq d(\f_0,\f_1).
$$

Indeed since the measure $(\f_1-\f_0)^2 MA(\max(\f_0,\f_1))$ does not charge the set 
$\{\f_0=\f_1\}$, and since $MA(\max(\f_0,\f_1))=MA(\f_0)$ on the Borel set 
$\{\f_0>\f_1\}$, it follows from the maximum principle $(MP)$ that
\begin{eqnarray*}
\lefteqn{\int_X (\f_1-\f_0)^2 MA(\max(\f_0,\f_1))} \\
&&=\int_{(\f_0>\f_1)} (\f_1-\f_0)^2 MA(\f_0)+\int_{(\f_0<\f_1)} (\f_1-\f_0)^2 MA(\f_1) \\
&& \geq \max \left\{ \int_{(\f_0>\f_1)} (\f_1-\f_0)^2 MA(\f_0); \int_{(\f_0<\f_1)} (\f_1-\f_0)^2 MA(\f_1) \right\},
\end{eqnarray*}
which yields the contents of \cite[Theorem 1.2]{Blo12}.
\end{rem}

\section{Weak geodesics} \label{sec:geod}

\subsection{Kiselman transform and geodesics}

For $\f,\p \in \H$ we let $\f \vee \p$ denote the greatest $\omega$-psh function that lies below
$\f$ and $\p$. In the notations of Berman-Demailly \cite{BD12}
$$
\f \vee \p=P(\min(\f,\p)),
$$
while $\f \vee \p$ is denoted by $P(\f,\p)$ in \cite{Dar14}.

It follows from the work of Berman and Demailly that the function
$\f \vee \p$ has bounded Laplacian (i.e. $\f \vee \p \in \H_{1,1}$) and that its Monge-Amp\`ere measure is supported on
the coincidence set
$$
\{ x \in X \, | \, \f \vee \p(x)=\min(\f,\p)(x) \}.
$$
Let us stress that Theorem \ref{thm:chen} continues to hold when $\f,\p$ (hence $\f \vee \p$) belong to
the set $\H_{1,1}$.

\smallskip

An important consequence of Kiselman minimum principle \cite{Kis78}
is the following key observation due to  Darvas and Rubinstein \cite{Dar14}:

\begin{lem} \label{lem:kis}
Let $(\f_t)$ be the geodesic joining $\f=\f_0 \in \H$ and $\p=\f_1 \in \H$.
Then for all $x \in X$,
$$
\f \vee \p(x)=\inf_{t \in [0,1]} \f_t(x).
$$
\end{lem}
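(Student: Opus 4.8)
The plan is to view the geodesic $(\f_t)$ as an $\omega$-psh function $\Phi$ on $X \times A$, via the correspondence of Proposition \ref{prop:Semmes}, and to apply Kiselman's minimum principle to the partial infimum over the annulus variable. First I would set $u(x) := \inf_{t \in [0,1]} \f_t(x)$. Since $\Phi$ is continuous up to the boundary and radial in the annulus coordinate $z = e^{t+is}$, the function $t \mapsto \f_t(x)$ is convex on $[0,1]$ (this is the defining feature of geodesics, as $\Phi$ restricted to the Monge--Amp\`ere foliation or simply as a subgeodesic has convex boundary behavior), so the infimum is attained and $u$ is well-defined and bounded; I would also record that $u \leq \f$ and $u \leq \p$, hence $u \leq \f \vee \p$ once I know $u \in PSH(X,\omega)$.

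\textbf{Key steps.} The two things to establish are: (i) $u$ is $\omega$-psh on $X$, and (ii) $u$ is the \emph{largest} $\omega$-psh function dominated by $\min(\f,\p)$. For (i), I would invoke Kiselman's minimum principle \cite{Kis78}: the partial Legendre/infimum of a plurisubharmonic function that is convex and independent of the imaginary part of an extra variable is again plurisubharmonic. Concretely, extend the annulus to the strip picture — write $\f_t$ for $t$ in a neighborhood of $[0,1]$ via the geodesic (or pass to the log coordinate so that $z \mapsto \f_{\log|z|}$ is $\omega$-psh on $X \times A$ and radial), and then $\inf_{t} \f_t$ is exactly the Kiselman infimum of an $\omega$-psh function on $X \times A$ over the factor $A$, using that $\Phi$ depends on $z$ only through $\log|z| = t$ and is convex in $t$. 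Kiselman's theorem then gives $u \in PSH(X,\omega)$. For (ii), given any $v \in PSH(X,\omega)$ with $v \leq \min(\f,\p)$, the constant path $v_t \equiv v$ is a subgeodesic lying below the geodesic at the endpoints $t=0,1$; by the comparison principle for the homogeneous Monge--Amp\`ere equation (the maximality of the geodesic among subgeodesics with the same boundary data, which follows from Theorem \ref{thm:chen} and the subsolution characterization recorded after Proposition \ref{prop:Semmes}), we get $v = v_t \leq \f_t$ for all $t \in [0,1]$, hence $v \leq u$. Combining, $u = \f \vee \p$.

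\textbf{Main obstacle.} The delicate point is making the application of Kiselman's minimum principle rigorous: one must ensure the geodesic potential genuinely defines an $\omega$-psh function on a product $X \times A$ to which Kiselman applies, and that the boundary values of $\inf_t \f_t$ are correctly identified with $\min(\f,\p)$ on the relevant part of the boundary (note the infimum over the \emph{open} interval vs. the closed one, and that $\f \vee \p$ equals the coincidence-set object of Berman--Demailly). Since $\f, \p \in \H$ are smooth and the geodesic lies in $\overline{\H} \subset \H_{1,1}$ with bounded Laplacian by Theorem \ref{thm:chen}, regularity is not an issue; the real content is the two-sided comparison. The lower bound $u \geq \f \vee \p$ is the subgeodesic/maximality argument of step (ii), and the upper bound $u \leq \f\vee\p$ is immediate from $u\le\f,\p$ together with $u\in PSH(X,\omega)$ from step (i). I expect step (ii) — extracting maximality of the Chen geodesic among subgeodesics — to be where the care is needed, though it is by now standard.
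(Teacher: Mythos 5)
Your proposal is correct and follows essentially the same route as the paper: Kiselman's minimum principle gives that $\inf_{t}\f_t$ is $\omega$-psh and hence lies below $\f\vee\p$, while the reverse inequality comes from observing that the constant path $(t,x)\mapsto \f\vee\p(x)$ is a subgeodesic below the boundary data, so it lies below the geodesic by maximality. The extra care you flag around the Kiselman application and the subgeodesic comparison is reasonable but does not change the argument, which is exactly the paper's.
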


\begin{proof}
It follows from Kiselman minimum principle \cite{Kis78} that 
$$
x \mapsto \inf_{t \in [0,1]} \f_t(x)
$$ 
is a $\omega$-psh function.
Since it lies both below $\f_0$ and $\f_1$, we infer 
$$
\inf_{t \in [0,1]} \f_t \leq \f \vee \p.
$$

Conversely $(t,x) \mapsto \f \vee \p(x)$ is a subgeodesic (independent of $t$), hence
for all $t,x$, $ \f \vee \p(x) \leq \f_t (x)$ and the result follows.
\end{proof}

We now establish a very useful relation due to Darvas \cite{Dar14}:

\begin{prop} \label{prop:pyth}
Assume $\f,\p \in \H$. Then
$$
d^2(\f,\p)=d^2(\f,\f \vee \p)+d^2(\f \vee \p, \p).
$$
\end{prop}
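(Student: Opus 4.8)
The plan is to reduce everything to the Pythagorean identity for $I_2$ already established in Proposition \ref{prop:python}, together with the comparison estimates between $d$ and $I_2$ from Proposition \ref{prop:upperbound}, exploiting the fact that $\f\vee\p$ lies below both $\f$ and $\p$ so that the sharp two-sided bounds of Proposition \ref{prop:upperbound} (valid under an ordering hypothesis) apply to each of the two pieces. First I would record from Lemma \ref{lem:kis} that $\f\vee\p=\inf_{t\in[0,1]}\f_t$ along the geodesic $(\f_t)$ joining $\f$ to $\p$; combined with convexity of $t\mapsto\f_t$ this shows the geodesic first decreases from $\f_0=\f$ down to $\f\vee\p$ and then increases up to $\f_1=\p$, or more precisely that $\f\vee\p\le\f_t$ for all $t$ with equality somewhere. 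The key point is then that $\f\vee\p$ is the ``meeting point'' through which the geodesic must pass, so concatenating the geodesic from $\f$ to $\f\vee\p$ with the one from $\f\vee\p$ to $\p$ should recover a length-minimizing path from $\f$ to $\p$, giving $d(\f,\p)=d(\f,\f\vee\p)+d(\f\vee\p,\p)$ as distances along a common minimizing geodesic; but this only yields the triangle identity in $d$, not the Pythagorean one, so a genuinely different argument is needed.

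The real argument I would run goes through $I_2$. Set $u=\f\vee\p$; since $u\le\f$ and $u\le\p$, apply Proposition \ref{prop:upperbound} in its sharp form on each ordered pair: $d(\f,u)^2$ and $I_2(\f,u)^2$ are comparable up to the explicit constant $2^{2+n/2}$, and likewise for $(u,\p)$. However, these are only comparability statements, not equalities, so to get the exact Pythagorean relation one must be more careful. The cleaner route is: use that along the geodesic $(\f_t)$ from $\f$ to $\p$, by Lemma \ref{lem:kis} the function $u=\f\vee\p$ equals $\f_{t_0}$ for the parameter $t_0$ realizing the infimum pointwise — wait, the infimum is taken pointwise so there need not be a single such $t_0$. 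Instead I would argue directly: the geodesic $(\f_t)_{t\in[0,1]}$ has the property that, restricted to $[0,s]$ and reparametrized, it is the geodesic from $\f$ to $\f_s$, and $\|\dot\f_t\|_{\f_t}\equiv d(\f,\p)$ is constant. Now I claim the function $u=\f\vee\p$ is itself reached at a specific inner time: consider instead the geodesic from $\f$ to $u$ and from $u$ to $\p$; these two, concatenated, form a path from $\f$ to $\p$ whose length is $d(\f,u)+d(u,\p)$, and one shows this concatenation is actually a geodesic (hence length-minimizing, hence equal to $d(\f,\p)$) because $\dot{}$ matches at the junction. But again that only gives the additive triangle equality.

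Given the statement is a Pythagorean (sum of squares) identity, the intended proof surely passes through Proposition \ref{prop:python}: the hard part, and the main obstacle, is bridging the gap between the $I_2$-Pythagorean identity and the $d$-Pythagorean identity. I expect the mechanism to be that for \emph{ordered} pairs $\f_0\le\f_1$ one in fact has an \emph{equality} $d(\f_0,\f_1)^2=\int_X(\f_1-\f_0)^2\,MA$ up to a correction that is itself a derivative of Aubin--Mabuchi type, so that along the decreasing branch $[\f,u]$ and the increasing branch $[u,\p]$ the Mabuchi energy splits exactly as the $I_2$ integrals do; more plausibly, the proof uses Lemma \ref{lem:kis} to write $d(\f,\p)^2=\int_X\dot\f_{t}^2\,MA(\f_t)$ for any fixed $t$, chooses $t$ adapted to the min-decomposition, and uses that on $\{\f<\p\}$ the geodesic from $\f$ to $u$ agrees (after the min operation) with $(\f_t)$ while on $\{\f>\p\}$ the geodesic from $u$ to $\p$ does, so that the single constant $\|\dot\f_t\|^2$ decomposes as a sum over these two complementary Borel sets exactly as in the proof of Proposition \ref{prop:python}. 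So the plan is: (1) invoke Lemma \ref{lem:kis} to identify $\f\vee\p$ with the pointwise infimum of the geodesic; (2) use the min-operation decrease property (Darvas's observation cited in the introduction to Section \ref{sec:geod}) to see that cutting the geodesic at the coincidence set produces the geodesics from $\f$ to $\f\vee\p$ and from $\f\vee\p$ to $\p$; (3) compute $d(\f,\p)^2$ as the constant Riemannian speed and split the defining integral over $\{\f<\p\}\sqcup\{\f>\p\}$ using the maximum principle exactly as in Proposition \ref{prop:python}; (4) recognize the two pieces as $d(\f,\f\vee\p)^2$ and $d(\f\vee\p,\p)^2$ respectively. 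The main obstacle is step (2)–(3): making precise that the Riemannian speed integrand of the geodesic $[\f,\p]$, when split along the (plurifine-open) coincidence sets, reproduces the speed integrands of the two sub-geodesics — this is where the maximum principle $(MP)$ and the structure of $\f\vee\p$ (its Monge–Ampère mass living on the coincidence set) do the essential work.
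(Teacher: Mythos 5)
Your proposal correctly rejects the two dead ends (concatenation of geodesics only gives the additive triangle equality; the $d$ versus $I_2$ comparison of Proposition \ref{prop:upperbound} only gives two-sided bounds, not an identity), and it correctly assembles the right list of ingredients: Lemma \ref{lem:kis}, the constant Riemannian speed $d^2(\f,\p)=\int_X(\dot\f_0)^2\,MA(\f_0)$, and the fact that $MA(\f\vee\p)$ is concentrated on the contact set where it agrees with $MA(\f)$ or $MA(\p)$. But the crucial step --- your step (4), ``recognize the two pieces as $d(\f,\f\vee\p)^2$ and $d(\f\vee\p,\p)^2$'' --- is exactly where the proof lives, and you leave it as a black box. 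The split you propose, over $\{\f<\p\}\sqcup\{\f>\p\}$, is also not quite the right one: the paper splits $\int_X(\dot\f_0)^2\,MA(\f_0)$ over $\{\dot\f_0>0\}$ and $\{\dot\f_0<0\}$, and these sets are characterized as contact sets, not as sets where $\f<\p$.

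The missing idea is a layer-cake (distribution function) comparison. From Lemma \ref{lem:kis} applied to the tilted geodesic $\f_t-st$ one gets, for every $s\in\R$, the identity $\{\dot\f_0\geq s\}=\{\f_0\vee(\f_1-s)=\f_0\}$, whence
$$
\int_{\{\dot\f_0>0\}}(\dot\f_0)^2\,MA(\f_0)=2\int_0^{+\infty}s\,MA(\f_0)\bigl(\{\f_0\vee(\f_1-s)=\f_0\}\bigr)\,ds .
$$
One then runs the same computation for the geodesic $(u_t)$ from $\f_0\vee\f_1$ to $\f_1$ (which is monotone, so $\dot u_0\geq 0$), obtaining $d^2(\f_0\vee\f_1,\f_1)=2\int_0^{+\infty}s\,MA(\f_0\vee\f_1)(\{\f_0\vee(\f_1-s)=\f_0\vee\f_1\})\,ds$, and finally identifies the two integrands level by level using that $MA(\f_0\vee\f_1)$ is supported on the contact set and equals $MA(\f_0)$ where $\f_1<\f_0$. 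Without this level-set comparison there is no mechanism converting the integral of $(\dot u_0)^2$ against $MA(\f_0\vee\f_1)$ into an integral of $(\dot\f_0)^2$ against $MA(\f_0)$, and the Pythagorean identity does not follow from the ingredients you list. So the proposal identifies the right toolbox but has a genuine gap at its central step.
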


We reproduce the proof of Darvas for the convenience of the reader.

\begin{proof}
Recall from Theorem \ref{thm:chen} (or rather its extension to $\H_{1,1}$) that
$$
d^2(\f,\p)=\int_X (\dot{\f_0})^2 MA(\f_0),
$$
where $(\f_t)$ denotes the geodesic from $\f=\f_0 \in \H$ to $\p=\f_1 \in \H$. 

Observe that for all $s \in \R$,
$$
\{ x \in X \, | \, \dot{\f_0}(x) \geq s \} =\{ x \in X \, | \,  \f_0 \vee (\f_1-s)(x)=\f_0(x)\}.
$$
It suffices to check this relation for $s=0$ and then apply it to the geodesic $(t,x) \mapsto \f_t(x)-s t$
joining $\f_0$ to $\f_1-s$. Now Lemma \ref{lem:kis} shows that
$$
\f_0 \vee \f_1(x)=\inf_{t \in [0,1]} \f_t(x)
$$
hence $\f_0 \vee \f_1(x)=\f_0(x)$ iff $\f_t(x) \geq \f_0(x)$ for all $t \in [0,1]$
 iff $\dot{\f}_0(x) \geq  0$, since $t \mapsto \f_t(x)$ is convex.
We infer that
$$
\int_{ \{ \dot{\f_0}>0\}} (\dot{\f_0})^2 MA(\f_0)
=2 \int_0^{+\infty} s \, MA(\f_0)(\{   \f_0 \vee (\f_1-s)=\f_0 \}) ds.
$$

Let now $(u_t)$ denote the geodesic joining $\f_0 \vee \f_1$ to $\f_1$. It is non decreasing
in $t$ since $\f_0 \vee \f_1 \leq \f_1$. Thus
\begin{eqnarray*}
\lefteqn{d^2(\f_0 \vee \f_1,\f_1) =\int_X (\dot{u_0})^2 MA(\f_0 \vee \f_1) } \\
&=& 2 \int_0^{+\infty} s \, 
MA(\f_0 \vee \f_1)(\{   (\f_0 \vee \f_1)  \vee (\f_1-s)=\f_0 \vee \f_1 \}) ds \\
&=& 2 \int_0^{+\infty} s \, 
MA(\f_0 \vee \f_1)(\{   \f_0  \vee (\f_1-s)=\f_0 \vee \f_1 \}) ds \\
&=& 2 \int_0^{+\infty} s \, 
MA(\f_0)(\{   \f_0  \vee (\f_1-s)=\f_0  \}) ds \\
&=& \int_{ \{ \dot{\f_0}>0\}} (\dot{\f_0})^2 MA(\f_0),
\end{eqnarray*}
using that $MA(\f_0 \vee \f_1)$ is supported on the contact set 
$\{ \f_0 \vee \f_1= \min(\f_0,\f_1)\}$ and equals $MA(\f_0)$ on this set whenever
$\f_1<\f_0$.

One shows similarly that 
$$
d^2(\f_0, \f_0 \vee \f_1) =\int_{ \{ \dot{\f_0} < 0\}} (\dot{\f_0})^2 MA(\f_0)
$$
and the desired identity follows.
\end{proof}

We note for later use the following consequence:

\begin{cor} \label{cor:min}
If  $\f,\p \in \H$ then
$$
d(\f,\f \vee \p) \leq d(\f,\p).
$$
and
$$
I_2(\f,\f \vee \p)^2+I_2(\f \vee \p,\p)^2 \leq 2^{n+4} d(\f,\p)^2.
$$
\end{cor}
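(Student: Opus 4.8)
The plan is to deduce both inequalities directly from Proposition \ref{prop:pyth} and the upper bound of Proposition \ref{prop:upperbound}, with essentially no new computation. For the first inequality, Proposition \ref{prop:pyth} gives the Pythagorean identity $d^2(\f,\p)=d^2(\f,\f\vee\p)+d^2(\f\vee\p,\p)$; since the second term on the right is non-negative, we immediately get $d^2(\f,\f\vee\p)\le d^2(\f,\p)$, hence $d(\f,\f\vee\p)\le d(\f,\p)$ (and by symmetry of the argument also $d(\f\vee\p,\p)\le d(\f,\p)$, though only the stated one is needed).

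For the second inequality, the key point is that $\f\vee\p$ lies below both $\f$ and $\p$, so each of the pairs $(\f\vee\p,\f)$ and $(\f\vee\p,\p)$ is ordered. I would therefore apply the ordered case of Proposition \ref{prop:upperbound}: for ordered potentials $u\le v$ one has $I_2(u,v)\le\sqrt{\int_X(v-u)^2\,MA(u)}$ and $\sqrt{\int_X(v-u)^2\,MA(u)}\le 2^{2+n/2}\,d(u,v)$, hence $I_2(u,v)\le 2^{2+n/2}d(u,v)$, i.e. $I_2(u,v)^2\le 2^{n+4}d(u,v)^2$. Applying this with $(u,v)=(\f\vee\p,\f)$ and with $(u,v)=(\f\vee\p,\p)$ and adding the two inequalities gives
$$
I_2(\f,\f\vee\p)^2+I_2(\f\vee\p,\p)^2\le 2^{n+4}\bigl(d(\f,\f\vee\p)^2+d(\f\vee\p,\p)^2\bigr).
$$
Now invoke Proposition \ref{prop:pyth} once more to collapse the bracket on the right-hand side to $d(\f,\p)^2$, which yields exactly the claimed estimate $I_2(\f,\f\vee\p)^2+I_2(\f\vee\p,\p)^2\le 2^{n+4}d(\f,\p)^2$.

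There is essentially no serious obstacle here: the corollary is a bookkeeping consequence of the two propositions it follows. The only small point to be careful about is the direction of the ordering — one must note that $\f\vee\p=P(\min(\f,\p))\le\min(\f,\p)\le\f$ and likewise $\le\p$, so that in applying Proposition \ref{prop:upperbound} the function playing the role of "$\f_0$" (the lower one, whose Monge--Amp\`ere measure appears) is $\f\vee\p$ in both uses, and the conclusion $I_2\le 2^{2+n/2}d$ is the one valid for ordered pairs. Finally one should recall, as noted in the text preceding Lemma \ref{lem:kis}, that $\f\vee\p\in\H_{1,1}$, so that all the results quoted (Proposition \ref{prop:pyth}, Proposition \ref{prop:upperbound}, Theorem \ref{thm:chen}) do apply to the pairs involving $\f\vee\p$.
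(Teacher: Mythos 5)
Your proposal is correct and follows exactly the paper's route: the paper's one-line proof likewise combines the Pythagorean identity of Proposition \ref{prop:pyth} with the ordered-pair estimate of Proposition \ref{prop:upperbound}, using that $\f \vee \p$ lies below both $\f$ and $\p$. Your write-up simply makes the bookkeeping (including the $\H_{1,1}$ caveat) explicit.
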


This follows  from previous result together with Proposition \ref{prop:upperbound}, since
$\f \vee \p \leq \f$ and $\f \vee \p \leq \p$.

\subsection{Metric completion}

For $\f,\p \in \E^2(X,\omega)$ we let $\f_j,\p_k$ denote sequences of elements in $\H$
decreasing to $\f,\p$ respectively, and set
$$
D(\f,\p):=\liminf_{j,k \rightarrow +\infty} d(\f_j,\p_k).
$$
We list in the proposition below various properties of this extension.

\begin{prop}
\text{ }

i) $D$ is a distance on $\E^2(X,\omega)$
which coincides with $d$ on $\H$;

ii) the definition of $D$ is independent of the choice of the approximants;

iii) $D$ is continuous along decreasing sequences in $\E^2(X,\omega)$.

\smallskip Moreover all previous inequalities comparing  $d$ and $I_2$ on $\H$
extend to inequalities between $D$ and $I_2$ on $\E^2(X,\omega)$.
\end{prop}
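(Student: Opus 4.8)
The plan is to verify the four assertions in order, relying on the key structural facts already established: Chen's theorem (Theorem~\ref{thm:chen}), the monotonicity of $d$ under the $\vee$-operation (Corollary~\ref{cor:min}), and the two-sided comparison between $d$ and $I_2$ on $\H$ (Proposition~\ref{prop:upperbound}). The main point to settle first is that $D$ is well defined, i.e. that $\liminf_{j,k} d(\f_j,\p_k)$ does not depend on the choice of decreasing approximants. For this I would argue as follows. Given two decreasing sequences $\f_j \searrow \f$ and $\f_j' \searrow \f$ in $\H$, the sequence $\max(\f_j,\f_j')$ also decreases to $\f$, and one can interleave: replacing $\f_j$ by $\tilde\f_j := \max(\f_j,\f_{\sigma(j)}')$ for a suitable increasing $\sigma$ makes $\tilde\f_j \geq \f_j'$ for the relevant indices. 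Then the triangle inequality gives $|d(\f_j,\p_k) - d(\f_j',\p_k)| \leq d(\f_j,\f_j')$, and to control $d(\f_j,\f_j')$ I would use Corollary~\ref{cor:min} together with the fact that $\f_j \vee \f_j'$ is squeezed between the two and both sequences $I_2$-converge to $\f$ (by Lemma~\ref{lem:controleL2} applied to the bounded potentials, since $\int_X (\f_j - \f_j')^2\, MA(\f_j) \to 0$ by dominated convergence as both decrease to the same $\f \in \E^2$). This simultaneously handles (ii) and the Cauchy property needed to know the $\liminf$ is actually a genuine limit, hence (iii) along decreasing sequences.

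For (i), symmetry and the triangle inequality for $D$ pass to the $\liminf$ from the corresponding properties of $d$ on $\H$ (using a diagonal-sequence argument to handle the middle point: given $\f,\p,\eta \in \E^2$ with approximants $\f_j,\p_j,\eta_j$, one has $d(\f_j,\p_j) \leq d(\f_j,\eta_j)+d(\eta_j,\p_j)$ and passes to the limit). That $D$ coincides with $d$ on $\H$: if $\f \in \H$ one may take the constant sequence, and continuity of $d$ along decreasing sequences approaching a \emph{smooth} limit, combined with Chen's formula, pins the value; alternatively this is immediate once one has the comparison inequalities below, since $D$ and $d$ are then sandwiched between the same quantities. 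Non-degeneracy, $D(\f,\p) = 0 \Rightarrow \f = \p$, is where the comparison with $I_2$ does the real work: passing the inequality $d \geq \frac{1}{n+1} I + (\text{max term})$ of Proposition~\ref{prop:lowerbound} and the upper bounds of Proposition~\ref{prop:upperbound} to the limit along decreasing approximants (legitimate because $MA(\f_j) \to MA(\f)$ weakly for decreasing sequences in $\E^2$, and the integrands converge appropriately by Fatou/Hartogs as already used in Section~\ref{sec:energy}) yields $c\, I_2(\f,\p) \leq D(\f,\p) \leq 2 I_2(\f,\p)$ type bounds on $\E^2$; then $D(\f,\p)=0$ forces $I_2(\f,\p)=0$, which forces $\f = \p$ by the domination principle (Proposition~\ref{pro:dp}) as noted after the definition of $I_2$.

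The final sentence of the proposition — that all the $d$ vs.\ $I_2$ inequalities on $\H$ extend to $D$ vs.\ $I_2$ on $\E^2$ — is then a matter of taking limits term by term. For each inequality I would approximate both arguments by decreasing sequences in $\H$, apply the known inequality at each finite stage, and pass to the $\liminf$/$\limsup$; the only care needed is the semicontinuity direction of each $I_2$-type integral under weak convergence of the Monge–Amp\`ere measures, which is exactly the Fatou--Hartogs argument already invoked for the completeness discussion of $\E^2_{\mathrm{norm}}$.

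\textbf{Expected main obstacle.} The delicate step is the independence of $D$ from the choice of approximants (assertion (ii)), because it is not a priori obvious that $d(\f_j,\f_j')\to 0$ when $\f_j,\f_j'\searrow \f$: $d$ on $\H$ is only controlled from above by $I_2$, and one must check that $I_2(\f_j,\f_j')\to 0$, which in turn requires the monotonicity Lemma~\ref{lem:controleL2} to replace the two approximating measures by a common one and then dominated convergence against that fixed measure. Everything else (triangle inequality, symmetry, agreement with $d$ on $\H$, propagation of inequalities) is routine limit-passing once this convergence is in hand.
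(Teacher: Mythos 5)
Your treatment of the central point --- assertion (ii) --- is essentially the paper's argument: interleave the two approximating sequences so that one dominates the other, bound $|d(\f_j,\p_k)-d(u_p,v_q)|$ by the triangle inequality and $d\le 2I_2$, and then control $I_2$ of two \emph{comparable} bounded potentials by an integral against the fixed limit measure $MA(\f)$ (via \cite[Lemma 3.5]{GZ07}, the unbounded-potential analogue of Lemma \ref{lem:controleL2}) so that monotone convergence applies. You correctly identify that the measure $MA(\f_j)$ varies with $j$ and must be traded for a fixed one before any convergence theorem is invoked; that is exactly the paper's route.

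There is, however, a genuine error in your non-degeneracy argument. You claim that passing Propositions \ref{prop:lowerbound} and \ref{prop:upperbound} to the limit yields a two-sided bound $c\,I_2(\f,\p)\le D(\f,\p)\le 2I_2(\f,\p)$ on $\E^2$, and then conclude $D(\f,\p)=0\Rightarrow I_2(\f,\p)=0$. The lower bound $c\,I_2\le D$ is not available: Proposition \ref{prop:lowerbound} bounds $d$ from below by $I$ (the $\E^1$ functional), not by $I_2$, and Proposition \ref{prop:upperbound} gives $I_2(\f,\p)\le 2^{2+n/2}d(\f,\p)$ \emph{only under the hypothesis} $\f\le\p$. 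The paper explicitly records in its concluding remarks that removing this ordering restriction is an open point (equivalent to a quasi-triangle inequality for $I_2$), so you cannot invoke it for arbitrary pairs. The separation property must instead be routed through an ordered configuration: this is what Corollary \ref{cor:min} provides, since $\f\vee\p$ lies below both $\f$ and $\p$, giving $I_2(\f,\f\vee\p)^2+I_2(\f\vee\p,\p)^2\le 2^{n+4}d(\f,\p)^2$; hence $D(\f,\p)=0$ forces $I_2(\f,\f\vee\p)=I_2(\p,\f\vee\p)=0$ and the domination principle (Proposition \ref{pro:dp}) yields $\f=\f\vee\p=\p$. (Alternatively, the extended Proposition \ref{prop:lowerbound} gives $I(\f,\p)=0$ together with the vanishing of the max term, which also suffices.) Since you already cite Corollary \ref{cor:min} elsewhere in your plan, the fix is local, but as written the step "$D=0\Rightarrow I_2=0$" does not follow from the inequalities you invoke.
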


In the sequel we will therefore denote $D$ by $d$.

\begin{proof}
It is a tedious exercise to verify that $D$ defines a "semi-distance", i.e. satisfies all properties of a distance
but for the separation property. The latter will be a consequence of the last statement in the proposition: if $D(\f,\p)=0$, it follows from Corollary \ref{cor:min} that 
$$
I_2(\f,\f \vee \p)=I_2(\p, \f \vee \p)=0,
$$
hence the domination principle yields $\f=\f \vee \p=\p$.

One can check that $D$ coincides with $d$ on $\H$ in various ways.
It follows from Proposition \ref{prop:Bo} that
$$
d(\f,\f_j) \leq \| \f-\f_j \|_{L^{\infty}(X)} \rightarrow 0
$$
and similarly $d(\p,\p_k) \rightarrow 0$ hence the triangle inequality yields $d(\f,\p)=D(\f,\p)$.
Using ii) one can also use the constant sequences $\f_j \equiv \f$ and $\p_k \equiv \p$
to obtain this equality.

We now prove ii). Let $\f_j, u_j$ (resp. $\p_k, v_k$) denote two sequences of
elements of $\H$ decreasing to $\f$ (resp. $\p$). We can assume without loss of generality
that these sequences are intertwining,
i.e. for all $j,k \in \N$, there exists $p,q \in \N$ such that
$$
\f_j \leq u_{p}
\; \text{ and }  \; 
\p_k \leq v_{q},
$$
with similar reverse inequalities. It follows therefore from Proposition \ref{prop:upperbound}
and the triangle inequality that
\begin{eqnarray*}
\left| d(\f_j,\p_k)-d(u_p,v_q) \right| &\leq& 
d(\f_j,u_p)+d(\p_k,v_q) \\
&\leq&  2 I_2(\f_j,u_p)+2 I_2(\p_k,v_q).
\end{eqnarray*}
Now
$$
I_2(\f_j,u_p) \leq \sqrt{  \int_X (\f_j-u_p)^2 MA(\f_j)}  \leq \sqrt{ 3^n \int_X (\f-u_p)^2 MA(\f)}  
$$
 as follows from \cite[Lemma 3.5]{GZ07}. 
The monotone convergence theorem therefore yields  $I_2(\f_j,u_p) \rightarrow 0$ 
as $p \rightarrow +\infty$ and similarly $ I_2(\p_k,v_q) \rightarrow 0$ as $q \rightarrow +\infty$,
proving ii).

One shows iii) with similar arguments. The extension of the inequalities comparing $d$ and 
$I_2$ (or $d$ and $I$) follows from \cite[Theorem 2.17]{BEGZ10}.
\end{proof}

 \begin{prop}
The metric space $(\E_{norm}^2(X,\omega),d)$ is complete. Moreover the Mabuchi topology dominates the
topology induced by $I$: if  a sequence converges for the Mabuchi distance, then it converges in energy.
\end{prop}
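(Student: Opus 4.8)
The plan is to prove the completeness of $(\E^2_{norm}(X,\omega),d)$ in two steps, following the template of \cite[Proposition 2.4]{BBEGZ} but using the comparison inequalities with $I_2$ to control convergence, and then to deduce the statement about topologies from Proposition \ref{prop:lowerbound}.

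\textbf{Step 1: a Cauchy sequence for $d$ is Cauchy for $I_2$, hence converges in $L^1$ to a candidate limit in $\E^2_{norm}$.} Let $(\f_j) \subset \E^2_{norm}(X,\omega)$ be a $d$-Cauchy sequence. By Corollary \ref{cor:min} (extended to $\E^2$), for any $j,k$ one has $I_2(\f_j,\f_j\vee\f_k)^2 + I_2(\f_j\vee\f_k,\f_k)^2 \leq 2^{n+4}d(\f_j,\f_k)^2$, and since $I_2$ satisfies the Pythagorean identity of Proposition \ref{prop:python}, $I_2(\f_j,\f_k)^2 = I_2(\f_j,\f_j\vee\f_k)^2 + I_2(\f_j\vee\f_k,\f_k)^2 \leq 2^{n+4}d(\f_j,\f_k)^2$. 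Thus $(\f_j)$ is $I_2$-Cauchy, hence $I_1$-Cauchy by Cauchy--Schwarz (as recorded in the paragraph before Lemma \ref{lem:controleL2}), and since $(\E^1_{norm}(X,\omega),d_1)$ is complete we obtain $\f \in \E^1_{norm}(X,\omega)$ with $d_1(\f_j,\f)\to 0$; moreover $MA(\f_j)\to MA(\f)$ weakly, $I_2(\f_j,0)$ is bounded, and Fatou/Hartogs give $\f \in \E^2_{norm}(X,\omega)$ — exactly the argument already sketched in the excerpt for $I_2$-Cauchy sequences.

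\textbf{Step 2: $d(\f_j,\f)\to 0$.} Here is where I expect the main difficulty. The clean approach is to upgrade the weak convergence $\f_j\to\f$ (together with the uniform $I_2$-bound) to $I_2(\f_j,\f)\to 0$. I would first reduce to monotone sequences: passing to a subsequence with $d(\f_j,\f_{j+1}) \leq 2^{-j}$, set $\psi_j := (\sup_{k\geq j}\f_k)^* $, a decreasing sequence in $\E^2_{norm}$ decreasing to $\f$, and $\phi_j := P(\f_j,\f_{j+1},\dots)$ (the largest $\omega$-psh function below all $\f_k$ for $k\geq j$), an increasing sequence whose upper regularized limit is again $\f$ by the domination principle. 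Using Corollary \ref{cor:min} and the Pythagorean identity one controls $d(\f_j,\psi_j)$ and $d(\f_j,\phi_j)$ by a tail of $\sum 2^{-k}$, so it suffices to prove $d(\psi_j,\f)\to 0$ and $d(\phi_j,\f)\to 0$; but $D$ is continuous along decreasing sequences by part iii) of the previous Proposition, handling $\psi_j\searrow\f$, and the increasing case follows from $\phi_j\leq\f$ together with the one-sided bound $d(\phi_j,\f)^2 \leq \int_X(\f-\phi_j)^2\,MA(\phi_j) \leq 3^n\int_X(\f-\phi_j)^2\,MA(\f)\to 0$ by \cite[Lemma 3.5]{GZ07} and monotone convergence, exactly as in the proof of part ii). The triangle inequality then gives $d(\f_j,\f)\to 0$ for the subsequence, and a standard Cauchy argument promotes this to the full sequence. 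The subtle point to get right is that all the comparison inequalities of Proposition \ref{prop:upperbound} and Corollary \ref{cor:min}, proved on $\H$, do extend to $\E^2(X,\omega)$ — which is precisely the last assertion of the previous Proposition, so I may invoke it.

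\textbf{Step 3: Mabuchi topology dominates the $I$-topology.} This is immediate from Proposition \ref{prop:lowerbound} (extended to $\E^2$, again via the last assertion of the previous Proposition): if $d(\f_j,\f)\to 0$ then $\tfrac{1}{n+1}I(\f_j,\f) \leq d(\f_j,\f) \to 0$, so $\f_j\to\f$ in energy, i.e. in $(\E^1(\a),d_1)$. I would close by remarking that Step 1 shows the two notions of Cauchy sequence coincide up to the extra integrability, so $(\E^2_{norm},d)$ sits inside $(\E^1_{norm},d_1)$ as the subset cut out by the finiteness of $I_2(\cdot,0)$, with a strictly finer topology.
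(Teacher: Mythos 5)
Your skeleton is the paper's: extract a subsequence with $d(\f_j,\f_{j+1})\le 2^{-j}$, form the envelopes $P(\f_j,\dots,\f_k)$, control them via Darvas's contractivity (Corollary \ref{cor:min}), and pass to the increasing limit $\phi_j\nearrow\f$. But three of the inequalities you lean on are not available. First, in Step 1 you invoke Proposition \ref{prop:python} to write $I_2(\f_j,\f_k)^2=I_2(\f_j,\f_j\vee\f_k)^2+I_2(\f_j\vee\f_k,\f_k)^2$; that proposition concerns the pointwise maximum $\max(\f,\p)$, not the envelope $\f\vee\p=P(\min(\f,\p))$, and for the envelope the paper only records the reverse comparison $J_2\le I_2$ in its concluding remarks. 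Your conclusion --- $I_2\le 2^{2+n/2}\,d$ with no ordering assumption --- is exactly the comparison the paper states it can only prove when $\f\le\p$, so this step proves too much. Its output is nevertheless recoverable: $I$-Cauchyness and the limit $\f\in\E^1_{norm}$ come from the envelope construction itself (or must be argued separately), and $\f\in\E^2_{norm}$ follows from the legitimate bound $I_2(\f_j,0)\le 2^{2+n/2}d(\f_j,0)$ (valid since $\f_j\le 0$) plus Fatou/Hartogs; but you may not claim the sequence is $I_2$-Cauchy.

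Second, in Step 2 the decisive estimate $\int_X(\f-\phi_j)^2\,MA(\phi_j)\le 3^n\int_X(\f-\phi_j)^2\,MA(\f)$ is backwards: Lemma \ref{lem:controleL2} and the cited lemma of [GZ07] bound such integrals by the one against $MA$ of the \emph{smaller} potential, here $\phi_j$, so the left-hand side dominates the right-hand one and monotone convergence gives nothing. The paper instead deduces from the tail estimate the uniform bound $I_2(0,\phi_j)\le 2^{3+n/2}$, applies the monotone convergence theorem of [BEGZ10, Theorem 2.17] to $\phi_j\nearrow\f$ to get $I(\f,\phi_j)+I_2(\phi_j,\f)\to 0$, and then uses $d\le 2I_2$. (Your decreasing envelopes $\psi_j$ are both superfluous and uncontrolled: Corollary \ref{cor:min} says nothing about $d(\f,\max(\f,\p))$.) Third, in Step 3 you drop the term $\max\left\{\int_X(\f_0-\f_1)MA(\f_0)\,;\,\int_X(\f_1-\f_0)MA(\f_1)\right\}$ from Proposition \ref{prop:lowerbound}; its two entries sum to $-I(\f_0,\f_1)\le 0$ and can both be negative, so $\frac{1}{n+1}I\le d$ does not follow as stated. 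The paper obtains $I(\f,\f_j)\to 0$ from the quasi-triangle inequality for $I$ together with $I(\phi_j,\f_j)\le\sqrt{2}\,I_2(\phi_j,\f_j)\le c_n\,d(\phi_j,\f_j)$, which is valid precisely because $\phi_j\le\f_j$.
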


 \begin{proof}
Let $(\f_j) \in \E_{norm}^2(X,\omega)^{\N}$ be a Cauchy sequence for $d$. Since 
$\sup_X \f_j$ is bounded, the sequence is relatively compact for the (weak) $L^1$-topology.
Let $\p$ be a cluster point for the $L^1$-topology. We claim that $\p \in \E_{norm}^2(X,\omega)$,
$$
d(\f_j,\p) \rightarrow 0
\text{ and }
I(\p,\f_j) \rightarrow 0.
$$

Extracting and relabelling, we can assume that 
$$
\f_j \stackrel{L^1}{\longrightarrow}  \p
\; \; \text{ and }  \; \; 
d(\f_j,\f_{j+1}) \leq 2^{-j}.
$$
Set $\f_{-1} \equiv 0$ and for $k \geq j$,
$
\p_{j,k}:=\f_j \vee \f_{j+1} \vee \cdots \vee \f_k.
$
Observe that 
\begin{eqnarray*}
d(0,\p_{j,k}) &\leq& \sum_{\ell=-1}^{j-1} d(\f_\ell,\f_{\ell+1}) +d( \f_j, \p_{j,k}) \\
&\leq & \sum_{\ell=-1}^{j} d(\f_\ell,\f_{\ell+1}) +d( \f_{j+1}, \p_{j+1,k}) \leq 2,
\end{eqnarray*}
as
$$
d( \f_j, \p_{j,k})=d( \f_j, \f_{j} \vee \p_{j+1,k}) \leq d( \f_j,  \p_{j+1,k})
\leq 2^{-j} +d( \f_{j+1}, \p_{j+1,k}).
$$
Since $\p_{j,k} \leq 0$ we obtain the uniform bound
$$
I_2(0,\p_{j,k})) \leq 2^{3+n/2}
$$
showing that $\p_j:=\lim_{k \rightarrow +\infty} \p_{j,k}) \in \E_{norm}^2(X,\omega)$.
Now $\p_j$ increases a.e. towards $\p$, hence $\p \in \E_{norm}^2(X,\omega)$ and 
\cite[Theorem 2.17]{BEGZ10} yields
$$
I(\p,\p_j)+I_2(\p_j,\p) \longrightarrow  0.
$$

It follows therefore from Proposition \ref{prop:upperbound} that
$d(\p,\p_j) \rightarrow 0$ and
$$
d(\p,\f_j) \leq d(\p,\p_j)+d(\p_j,\f_j) \leq d(\p,\p_j)+2^{1-j} \rightarrow 0.
$$
Recalling that $\p_j \leq \f_j$, it follows from the quasi-triangle inequality that
$$
I(\p,\f_j) \leq c_n \left\{ I(\p,\p_j)+I(\p_j,\f_j) \right\}
\leq c_n \left\{ I(\p,\p_j)+2^{2+n/2} d(\p_j,\f_j)  \right\}\rightarrow 0,
$$
and the proof is complete.
\end{proof}

Recall that the {\it precompletion} of a metric space $(X,d)$
is the set of all Cauchy sequences $C_X$ of $X$, together with the semi-distance
$$
\d(\{x_j\},\{y_j\})=\lim_{j \rightarrow +\infty} d(x_j,y_j).
$$
The metric {\it completion} $(\overline{X},d)$ of $(X,d)$ is the quotient space $C_X/\sim$, where
$$
\{x_j\} \sim \{y_j\} \Longleftrightarrow \d(\{x_j\},\{y_j\})=0,
$$
equipped with the induced distance that we still denote by $d$.

Recall that a {\it path metric space} is a metric space for which the distance between any two points coincides with the infimum of the lengths of rectifiable curves joining the two points. By construction
the space $(\H,d)$ is a path metric space. For such metric spaces, an alternative description of the metric completion can be obtained as follows: consider $C_X'$ the set of all rectifiable curves 
$\g:(0,1] \rightarrow X$ equipped with the semi-distance
$$
\d(\g,\tilde{\g}):=\lim_{t \rightarrow 0} d(\g(t),\tilde{\gamma}(t)).
$$
The metric completion $(\overline{X},d)$ is then the quotient space $C_X'/\sim$ which identifies
zero-distance curves $\g,\tilde{\g}$.

\smallskip

Both constructions yield a rather abstract view on the metric completion. We are now taking advantage of the fact that $\H$ leaves inside the complete metric space $(\E^2(\a),d)$ to conclude that:

\begin{thm}
The metric completion $(\overline{\H}_\a,d)$  is isometric to
$(\E^2(\a),d)$.
\end{thm}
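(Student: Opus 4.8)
The plan is to show that $(\E^2(\a),d)$ is a complete metric space containing $(\H_\a,d)$ as a dense isometric subspace, for then the universal property of the metric completion immediately identifies $(\overline{\H}_\a,d)$ with $(\E^2(\a),d)$. Completeness of $(\E^2_{norm}(X,\omega),d)$ has already been established in the preceding proposition, so the only remaining ingredient is the \emph{density} of $\H$ in $\E^2(\a)$ for the Mabuchi distance $d$ (we continue to work at the level of normalized potentials, identifying $\E^2(\a)$ with $\E^2_{norm}(X,\omega)$ and $\H_\a$ with the normalized potentials in $\H$).

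First I would recall that any $\f \in \E^2(X,\omega)$ can be approximated by a \emph{decreasing} sequence $(\f_j) \subset \H$ (or at least in $\H_{1,1}$), for instance via Demailly regularization followed by the truncations $\max(\cdot,-j)$; this is exactly the kind of sequence used to define the extension $D$ in the previous proposition. By the inequality $I_2(\f_j,\f) \leq \sqrt{3^n \int_X (\f-\f_j)^2 MA(\f)}$ (which follows from \cite[Lemma 3.5]{GZ07}) together with the monotone convergence theorem, we get $I_2(\f_j,\f) \to 0$, and then the comparison inequality $D(\f,\f_j) \leq 2 I_2(\f,\f_j)$ (the extension to $\E^2$ of Proposition \ref{prop:upperbound}) gives $d(\f_j,\f) \to 0$. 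Hence $\H$ is dense in $(\E^2(\a),d)$.

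Next I would check that the embedding $\H \hookrightarrow \E^2(\a)$ is isometric, i.e. that the extended distance $D$ restricts to the original Mabuchi distance $d$ on $\H$: this is precisely statement (i) of the previous proposition. Combined with completeness and density, the abstract characterization of the metric completion (as the unique, up to isometry, complete metric space in which the given space sits densely and isometrically) forces $(\overline{\H}_\a,d) \cong (\E^2(\a),d)$. Concretely, given a Cauchy sequence $(\p_m)$ in $\H$, one produces its limit in $\E^2(\a)$ by the completeness proposition, and conversely every point of $\E^2(\a)$ arises this way by density; the map sending a Cauchy sequence to its $\E^2$-limit descends to the claimed isometry on the quotient $C_{\H}/\!\sim$.

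The main obstacle is not in the logic of the argument but in making sure all the comparison inequalities between $d$ (really $D$) and $I_2$ genuinely extend from $\H$ to $\E^2(X,\omega)$, and that the regularizing sequences can be taken decreasing within $\H$ (or $\H_{1,1}$, where Theorem \ref{thm:chen} still applies). The decreasing approximation is the delicate analytic input: Demailly's regularization theorem produces smooth $\omega$-psh approximants from above, and intersecting with the truncations $\max(\cdot,-j)$ keeps them in $\H_{1,1}$ and still decreasing to $\f$; one then invokes the continuity of $D$ along decreasing sequences (statement (iii)) and the $I_2$-estimate above to pass to the limit. Once these technical points are in place, the identification is immediate.
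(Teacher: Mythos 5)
Your proposal is correct and follows essentially the same route as the paper: both reduce the theorem to the density of $\H_{norm}$ in the complete space $(\E^2_{norm}(X,\omega),d)$, approximating any $\f\in\E^2_{norm}(X,\omega)$ by a decreasing sequence from Demailly regularization and controlling $d(\f_j,\f)$ via the comparison $d\leq 2I_2$ together with the bound $\int_X(\f_{j+p}-\f_j)^2\,MA(\f_{j+p})\leq 3^n\int_X(\f-\f_j)^2\,MA(\f)$ from \cite[Lemma 3.5]{GZ07} and monotone convergence. The only cosmetic difference is that the paper also verifies the Cauchy property of $(\f_j)$ directly before passing to the limit, whereas you go straight through $I_2(\f_j,\f)\to 0$; the substance is identical.
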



\begin{proof}
We work at the level of normalized potentials, with
$$
 \E^2_{norm}(X,\omega)=\{ \f \in \E^2(X,\omega) \, | \, \sup_X \f=0 \}
$$
and
$$
\H_{norm}=\{ \f \in {\mathcal C}^{\infty}(X,\R) \, | \, \omega+dd^c \f >0 \text{ and } \sup_X \f=0 \}.
$$

Since $(\E^2_{norm}(X,\omega),d)$ is a complete metric space that contains 
$\H_{norm}$, it suffices to show that the latter is dense in $\E^2_{norm}(X,\omega)$.
Fix $\f \in \E^2_{norm}(X,\omega)$ and let $(\f_j) \in {\H}_{norm}^\N$ be a
sequence decreasing to $\f$ (the existence of such a sequence follows from Demailly's regularization
result \cite{Dem92}). It follows from Proposition \ref{prop:upperbound} that
$$
d(\f_{j+p},\f_j)^2 \leq \int_X (\f_{j+p}-\f_j)^2 MA(\f_{j+p}).
$$
Now \cite[Lemma 3.5]{GZ07} shows that the latter is bounded from above by
$$
3^n \int_X (\f-\f_j)^2 MA(\f)
$$
which converges to zero as $j \rightarrow +\infty$, as follows from the monotone convergence theorem.
Therefore $(\f_j)$ is a Cauchy sequence in $({\H}_{norm} ,d)$ which converges to $\f$ since 
$$
0 \leq d(\f,\f_j) \leq 2 I_2(\f_j,\f) \rightarrow 0
$$ 
by Proposition \ref{prop:upperbound} and \cite[Theorem 2.17]{BEGZ10}.

\smallskip

We note the following alternative approach of independent interest.
One  first shows that ${\H}_{norm}$ is dense in the set of all bounded $\omega$-psh functions.
Given $\f \in \E_{norm}^2(X,\omega)$ one then considers its "canonical approximants" 
$$
\f_j=\max(\f,-j) \in PSH_{norm}(X,\omega) \cap L^{\infty}(X)
$$
which  decrease towards $\f \in \E^2(X,\omega)$.
It follows from Proposition \ref{prop:upperbound} that 
\begin{eqnarray*}
\lefteqn{ d(\f_{j+p},\f_j)^2 \leq  \int_X (\f_{j+p}-\f_j)^2 MA(\f_{j+p})}  \\
&&=\int_{(\f \leq -j-p)} p^2 MA(\f_{j+p}) +\int_{(-j-p<\f<-j)} (\f_{j+p}-\f_j)^2 MA(\f)  \\
&&=\int_{(\f \leq -j-p)} p^2 MA(\f)+\int_{(-j-p<\f<-j)} (\f_{j+p}-\f_j)^2 MA(\f)  \\
&& \leq \int_{(\f<-j)} \f^2 MA(\f),
\end{eqnarray*}
where we have used the maximum principle (MP) together with the fact that since $\f \in \E(X,\omega)$,
$$
\int_{(\f \leq -k)} p^2 MA(\f_{k})=\int_X MA(\f_{k})-\int_{(\f>-k)} MA(\f_{k})=
\int_{(\f \leq -k)} MA(\f),
$$
as follows again from the maximum principle.
We infer that $(\f_j)$ is a Cauchy sequence which converges to $\f$.
\end{proof}

\subsection{Bounded geodesics}

Homogeneous complex Monge-Amp\`ere equations have been intensively studied since the mid $70$'s,
after Bedford and Taylor laid down the foundations of pluripotential theory in \cite{BT76,BT82}.
An adaptation of the classical Perron envelope technique yields the following:

\begin{prop} \label{prop:Bo}
Assume $\f_0,\f_1$ are bounded $\omega$-psh functions. Then
$$
\Phi(x,z):=\sup \{ \p(x,z) \, | \, \p \in PSH(X \times A, \omega) \text{ with }
\lim_{t \rightarrow 0,1} \p \leq \f_{0,1} \}.
$$
is the unique bounded $\omega$-psh function on $X \times A$ solution of the Dirichlet problem
$\Phi_{| X \times \partial A}=\f_{0,1}$ with
$$
(\omega+dd^c_{x,z} \Phi)^{n+1}=0 
\text{ in } X \times A.
$$

Moreover $\Phi(x,z)=\Phi(x,t)$ only depends on $|z|$ and 
$|\dot{\Phi}| \leq \| \f_1-\f_0\|_{L^{\infty}(X)}$.
\end{prop}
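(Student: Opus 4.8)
The plan is to adapt the classical Perron method for the homogeneous Monge--Amp\`ere equation, relying on the fact that $\f_0,\f_1$ are bounded. First I would check that the envelope $\Phi$ is well-defined: the family of competitors is nonempty (any sufficiently negative constant below $\min(\inf\f_0,\inf\f_1)$ belongs to it) and uniformly bounded above (by $\max(\sup\f_0,\sup\f_1)$, via the maximum principle on $X\times A$ applied to a competitor $\p$ minus a harmonic function on $A$ equal to the boundary bounds). Hence $\Phi$ is a well-defined bounded function, and its upper semicontinuous regularization $\Phi^*$ is $\omega$-psh on $X\times A$ by the standard Brelot--Cartan / Choquet argument; one then argues $\Phi^*=\Phi$ using that the constant competitors and the boundary data are themselves admissible, so $\Phi$ is already $\omega$-psh.

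Next I would verify the boundary values. The upper bound $\limsup_{t\to 0,1}\Phi\le \f_{0,1}$ follows from the definition of the competitor family together with the maximum principle: any admissible $\p$ satisfies the boundary inequality, and a harmonic majorant on the annulus interpolating $\f_0,\f_1$ (which are bounded) forces $\Phi$ down to $\f_{0,1}$ at the boundary. For the lower bound I would construct explicit subsolutions: since $\f_0,\f_1$ are bounded, the function $\max(\f_0,\f_1)+A(\log|z|)(\log|z|-1)$ for suitable $A>0$ (or a similar barrier built from the harmonic function on $A$ with boundary values $\f_0,\f_1$ perturbed downward) is an admissible competitor with the correct boundary limit, so $\liminf_{t\to0,1}\Phi\ge\f_{0,1}$. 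Together these give continuity up to the boundary with $\Phi|_{X\times\partial A}=\f_{0,1}$.

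Then I would establish that $\Phi$ solves $(\omega+dd^c_{x,z}\Phi)^{n+1}=0$ on $X\times A$. This is the standard balayage argument: if the Monge--Amp\`ere measure were positive on some small ball $B\Subset X\times A$, solve the Dirichlet problem for the homogeneous equation on $B$ with boundary data $\Phi|_{\partial B}$ (using Bedford--Taylor existence for bounded data), obtaining a function $\tilde\Phi\ge\Phi$ that is still an admissible competitor, contradicting maximality. Uniqueness follows from the comparison principle for bounded $\omega$-psh functions (stated in the excerpt): if $\Phi_1,\Phi_2$ are two bounded solutions with the same boundary data, then for any $\e>0$, $\{\Phi_1<\Phi_2-\e\}$ is relatively compact in $X\times A$ and the comparison principle gives $\int_{\{\Phi_1<\Phi_2-\e\}}(\omega+dd^c\Phi_2)^{n+1}\le \int_{\{\Phi_1<\Phi_2-\e\}}(\omega+dd^c\Phi_1)^{n+1}$, both zero, and a standard argument forces the set to be empty; symmetrically $\Phi_1=\Phi_2$.

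Finally, the rotation invariance: for each $\theta\in\Circle$, the rotated function $(x,z)\mapsto\Phi(x,e^{i\theta}z)$ is again $\omega$-psh, solves the same equation, and has the same boundary values (since $\f_0,\f_1$ do not depend on the annulus coordinate), so by uniqueness it equals $\Phi$; hence $\Phi(x,z)=\Phi(x,|z|)=\Phi(x,t)$. For the Lipschitz bound $|\dot\Phi|\le\|\f_1-\f_0\|_{L^\infty}$, set $M:=\|\f_1-\f_0\|_{L^\infty(X)}$ and observe that both $(x,z)\mapsto\Phi(x,z)+M\log|z|$ and $(x,z)\mapsto\Phi(x,z)-M\log|z|$ can be compared, via the maximum principle on $X\times A$, with the translates of $\Phi$ by a shift in $t$; more directly, since $\f_1-\f_0-M\le 0\le \f_1-\f_0+M$ on $X\times\partial A$, the functions $\Phi(x,z)$ and $\Phi$ composed with the conformal shift $z\mapsto e^{s}z$ differ on the boundary by at most $|s|M$, and the comparison principle propagates this to the interior, giving $|\Phi(x,t+s)-\Phi(x,t)|\le |s|M$ and hence $|\dot\Phi|\le M$. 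The main obstacle I anticipate is the careful treatment of the boundary regularity — producing barriers that are genuinely admissible competitors and pinning down the continuous attainment of $\f_{0,1}$ — since everything else is a routine transcription of Bedford--Taylor's Perron technique to the compact K\"ahler setting.
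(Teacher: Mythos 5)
Your overall strategy --- Perron envelope, slice-wise harmonic majorant for the upper boundary bound, balayage for maximality, comparison principle for uniqueness, and rotation invariance via uniqueness --- is exactly the route the paper takes (it compresses most of this into a citation of the classical balayage technique). The genuine gap is in your lower barrier, which is the one ingredient the paper actually writes out. The function $\max(\f_0,\f_1)+A\log|z|\,(\log|z|-1)$ equals $\max(\f_0,\f_1)$ on both boundary circles, so it is not an admissible competitor (its boundary limit exceeds $\f_0$ wherever $\f_1>\f_0$) and therefore cannot force $\liminf_{t\to 0}\Phi\ge\f_0$. Your fallback, the harmonic interpolation $(1-t)\f_0+t\f_1$ with $t=\log|z|$, has the right boundary values but is in general \emph{not} $\omega$-psh on $X\times A$: its $dz\wedge d\overline{z}$ coefficient vanishes while the mixed terms coming from $d_x(\f_1-\f_0)$ do not, so positivity of the $(n+1)$-dimensional form fails --- this is the same computation as in Proposition \ref{prop:Semmes}, where an affine path is a subgeodesic only when $d\dot{\f}\wedge d^c\dot{\f}\wedge\omega_\f^{n-1}=0$.

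The barrier that works, and that the paper uses, is
$$
\chi(x,z)=\max\bigl\{\f_0(x)-A\log|z|,\ \f_1(x)+A(\log|z|-1)\bigr\},\qquad A=\|\f_1-\f_0\|_{L^{\infty}(X)}.
$$
Each branch is the sum of an $\omega$-psh function of $x$ alone and a harmonic function of $z$ alone, so there are no cross terms and each branch (hence their maximum) is $\omega$-psh on $X\times A$; at $\log|z|=0$ the maximum equals $\f_0$ because $\f_1-A\le\f_0$, and at $\log|z|=1$ it equals $\f_1$. This single admissible competitor settles the lower boundary bound, and, combined with the convexity of $t\mapsto\Phi_t$ (radial subharmonicity in $z$), it also yields $|\dot{\Phi}|\le A$ directly: $\Phi_t\ge\f_0-At$ gives $\dot{\Phi}_0\ge -A$, while $\Phi_t\ge\f_1+A(t-1)$ gives $\dot{\Phi}_t\le(\Phi_1-\Phi_t)/(1-t)\le A$, and $\dot{\Phi}_t$ is nondecreasing. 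This is cleaner than your conformal-shift argument, where $\Phi(x,e^{s}z)$ lives on a translated annulus and the comparison must be set up on a common subdomain.
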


\begin{proof}
Recall that $A=[1,e] \times S^1$ is an annulus in $\C$ with complex coordinate $z=e^{t+is}$.
The proof follows from a classical balayage technique, together with a barrier argument: observe that
for $A=\| \f_1-\f_0\|_{L^{\infty}(X)}$, the function  
$$
\chi_t(x)=\max\{ \f_0(x)-A \log |z|, \f_1(x)+A (\log|z|-1) \}
$$
belongs to the family and has the right boundary values. Using that $t \mapsto \Phi_t$ is convex
(by subharmonicity in $z$), one easily deduces the bound on $|\dot{\Phi}|$.
We refer the reader to \cite{PSS12,Bern13} for more details.
\end{proof} 

The function $\Phi$ (or rather the path $\Phi_t \subset PSH(X,\omega) \cap L^{\infty}(X)$)
has been called a {\it bounded geodesic} in recent works (see notably \cite{Bern13}).
We use the same terminology here (although it might be confusing at first), as 
it turns out that bounded
(and later on weak) geodesics are geodesics in the metric sense, i.e. constant speed rectifiable paths which minimize length:

\begin{prop} \label{prop:bddmin}
Bounded geodesics are length minimizing. More precisely, if 
$\f_0,\f_1$ are bounded $\omega$-psh functions and $\Phi(x,z)=\Phi_t(x)$ is the bounded
geodesic joining $\f_0$ to $\f_1$, then for all $t,s \in [0,1]$,
$$
d(\Phi_t,\Phi_s)=|t-s| \, d(\Phi_0,\Phi_1).
$$
\end{prop}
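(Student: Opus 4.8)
The plan is to reduce the general statement $d(\Phi_t,\Phi_s)=|t-s|\,d(\Phi_0,\Phi_1)$ to the fact that bounded geodesics arise as uniform limits of smooth $\varepsilon$-geodesics, which were already shown by Chen to be length minimizing in $\H$, combined with the continuity of $d$ along decreasing sequences. First I would approximate: by Demailly regularization choose smooth $\omega$-psh functions $\f_0^j\searrow\f_0$ and $\f_1^j\searrow\f_1$, and let $\Phi^j$ be the bounded geodesic (Proposition \ref{prop:Bo}) joining $\f_0^j$ to $\f_1^j$. By the comparison principle applied on $X\times A$ to the defining Perron envelopes, $\Phi^j$ decreases to $\Phi$ as $j\to\infty$; and restricting to each slice, $\Phi^j_t\searrow\Phi_t$ for every $t$, uniformly in $t$ since $|\dot\Phi^j|$ is uniformly bounded by $\|\f_1^j-\f_0^j\|_\infty\le C$. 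At the level of the $\f_i^j$ themselves, these still lie only in $PSH\cap L^\infty$, not in $\H$, so one more approximation layer (or the Berman--Demailly extension of Theorem \ref{thm:chen} to $\H_{1,1}$ alluded to after Lemma \ref{lem:kis}) is needed; I would smooth once more to genuine K\"ahler potentials, or simply invoke that the whole of Theorem \ref{thm:chen} holds on $\H_{1,1}$ and that bounded geodesics with continuous endpoints can be handled there.

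Next I would record the key sub-claim that Chen's regularity theory gives: for endpoints in $\H$ (or $\H_{1,1}$) the bounded geodesic is the almost $\mathcal C^{1,1}$ geodesic of Theorem \ref{thm:chen}, which has constant Riemannian speed and is length minimizing, so that $d(\Phi^j_t,\Phi^j_s)=|t-s|\,d(\Phi^j_0,\Phi^j_1)$ holds exactly for the approximants. Then I would pass to the limit. Since $\Phi^j_t\searrow\Phi_t$ for each fixed $t$ (all lying in $PSH\cap L^\infty$), the continuity of $d$ along decreasing sequences — property (iii) of the proposition describing $D$, together with the identification of $D$ with $d$ established there and the fact that $d(\f,\f_j)\to0$ when $\f_j\searrow\f$ in $\E^2$ via Proposition \ref{prop:upperbound} and \cite[Lemma 3.5]{GZ07} — yields $d(\Phi^j_t,\Phi^j_s)\to d(\Phi_t,\Phi_s)$ and $d(\Phi^j_0,\Phi^j_1)\to d(\Phi_0,\Phi_1)$. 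The identity $d(\Phi_t,\Phi_s)=|t-s|\,d(\Phi_0,\Phi_1)$ follows by taking the limit of the corresponding identity for $\Phi^j$.

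Finally I would note that length minimality is an immediate consequence: the triangle inequality gives $d(\Phi_0,\Phi_1)\le\ell((\Phi_t))$ for the path $(\Phi_t)$ regarded as a curve in the metric space $(\overline\H_\a,d)$, while the additivity $d(\Phi_0,\Phi_t)+d(\Phi_t,\Phi_1)=d(\Phi_0,\Phi_1)$ just proved forces every subdivision sum to equal $d(\Phi_0,\Phi_1)$, so $\ell((\Phi_t))=d(\Phi_0,\Phi_1)$ and the path is a constant-speed geodesic.

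The main obstacle I anticipate is the convergence step: one must be sure that $d(\Phi^j_t,\Phi^j_s)\to d(\Phi_t,\Phi_s)$ \emph{uniformly enough} to pass to the limit, which requires knowing that the bounded geodesic segments $\Phi^j$ lie in a fixed $\E^2$-ball and that $\Phi^j_t\to\Phi_t$ in the Mabuchi distance on each slice — not merely weakly. This is exactly where the $\E^2$-estimates (Proposition \ref{prop:upperbound}, Corollary \ref{cor:min}, and \cite[Lemma 3.5]{GZ07}) do the work, controlling $I_2(\Phi^j_t,\Phi_t)$ by $\int_X(\Phi_t-\Phi^j_t)^2\,MA(\Phi_t)\to0$ uniformly in $t$ because $|\Phi_t-\Phi^j_t|\le|\f_0-\f_0^j|+|\f_1-\f_1^j|$ (a convexity/maximum-principle bound along the geodesic). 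A secondary point requiring care is justifying that the approximate geodesics $\Phi^j$ do decrease to $\Phi$: this follows from monotonicity of the Perron envelope in Proposition \ref{prop:Bo} under decreasing boundary data, but should be stated explicitly.
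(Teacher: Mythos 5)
Your proposal is correct and follows essentially the same route as the paper: approximate the endpoints by smooth potentials in $\H$ decreasing to $\f_0,\f_1$ (so the extra approximation layer you worry about is not needed), observe via the maximum principle and uniqueness in Proposition \ref{prop:Bo} that the corresponding bounded geodesics $\Phi_{t,j}$ decrease to $\Phi_t$, apply Chen's theorem to get the identity for the approximants, and pass to the limit using the continuity of $d$ along decreasing sequences. The uniformity concerns you raise at the end are exactly what that continuity property (established in the proposition describing $D$) is designed to absorb.
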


\begin{proof} 
Let $\f_0^{(j)}, \f_1^{(j)} \in \H$ be sequences decreasing respectively to $\f_0, \f_1$. 
It follows from the maximum principle and the uniqueness in Proposition 
\ref{prop:Bo} that $\Phi_{t,j}$ decreases to $\Phi_{t}$ as $j$ increases to $+\infty$.
Chen's theorem shows that
$$
d \left( \Phi_{t,j}, \Phi_{s,j} \right)=|t-s| \, d \left( \Phi_{0,j}, \Phi_{1,j} \right).
$$
The conclusion follows since $d \left( \Phi_{t,j}, \Phi_{s,j} \right) \longrightarrow d(\Phi_t,\Phi_s)$.
\end{proof}

\begin{rem}
Assume that $\f_0 \in \H$, while $\f_1 \in PSH(X,\omega) \cap L^{\infty}(X)$. Let
$\f_1^{(j)} \in \H$ be a sequence decreasing to $\f_1$. 
Recall that the approximating geodesics $\Phi_{t,j}$ decrease to $\Phi_{t}$ as $j$ increases to $+\infty$.
We infer from Chen's theorem that
$$
d\left(\f_0,\f_1^{(j)} \right)^2=\int_X \left(\dot{\Phi}_{0,j} \right)^2 MA(\Phi_0)
=\int_X \left(\dot{\Phi}_{t,j} \right)^2 MA(\Phi_t),
$$
for all $t \in [0,1]$.
It follows from the convexity of $t \mapsto \Phi_{t,j}$ that for all $x \in X$,
$$
\dot{\Phi}_{0,j}(x) \leq \frac{ \Phi_{t,j}(x)- \Phi_{0,j}(x)}{t}.
$$
Letting $j \rightarrow +\infty$ and then $t \searrow 0^+$, we infer
$$
\limsup_{j \rightarrow +\infty} \dot{\Phi}_{0,j}(x) \leq \dot{\Phi}_{0}(x).
$$

Observe conversely  that by convexity of $t \mapsto \Phi_t$,
$$
\dot{\Phi}_{0}(x) \leq \frac{ \Phi_{t}(x)- \Phi_{0}(x)}{t}
\leq  \frac{ \Phi_{t,j}(x)- \Phi_{0}(x)}{t},
$$
where the last inequality follows from the maximum principle, which insures that 
$\Phi_{t,j}$ decreases to $\Phi_t$. Since $\Phi_0=\f_0=\Phi_{0,j}$ is fixed, we can let
$t$ decrease to zero and obtain that for all $j,x$,
$
\dot{\Phi}_{0}(x) \leq \dot{\Phi}_{0,j}(x).
$

Therefore $\dot{\Phi}_{0,j}(x) {\longrightarrow} \dot{\Phi}_{0}(x)$  as $j \rightarrow +\infty$. Since
these functions are uniformly bounded, the dominated convergence theorem
insures that 
$$
d(\f_0,\f_1)^2 = \int_X \left(\dot{\Phi}_{0} \right)^2 MA(\Phi_0).
$$

One can however not expect that $d(\f_0,\f_1)^2 = \int_X \left(\dot{\Phi}_{t} \right)^2 MA(\Phi_t)$
for $t>0$ as simple examples show. One can e.g. take $\f_0\equiv 0$ and $\f_1=\max(u,0)$, where
$u$ takes positive values and is a fundamental solution of the Monge-Amp\`ere operator 
(i.e. it has isolated singularities and solves $MA(u)=$Dirac mass at some point):   in this case
$MA(\f_0)$ is concentrated on the contact set $(u=0)$ while $\dot{\Phi}_{1} \equiv 0$ on this set
hence
$$
\int_X \left(\dot{\Phi}_{1} \right)^2 MA(\Phi_1)=0.
$$
We thank T.Darvas for pointing this to us.
\end{rem}

\begin{rem}
 Let $(\f_j)$ be a sequence of bounded $\omega$-psh functions uniformly converging to 
some $\omega$-psh function $\f$. Let $\Phi_t$ denote the geodesic joining 
$\f_j$ to $\f$. 
It follows from Proposition \ref{prop:Bo} that  for all $0 \leq t \leq 1$,
$$
d(\f,\f_j) =\sqrt{ \int_X (\dot{\Phi_t})^2 MA(\Phi_t)} \leq ||\f-\f_j||_{L^{\infty}(X)}.
$$
We will see in Example \ref{exa:explicit} that the convergence in the Mabuchi sense is much weaker than the uniform convergence.
\end{rem}

\subsection{Finite energy geodesics}

We now define weak geodesics joining two finite energy endpoints $\f_0,\f_1 \in \E^1(X,\omega)$.
Fix $j \in \N$ and consider
$\f_0^{(j)}, \f_1^{(j)}$ smooth (or bounded) $\omega$-psh functions decreasing to $\f_0,\f_1$.
We let $\f_{t,j}$ denote the bounded geodesic joining $\f_0^{(j)}$ to $\f_1^{(j)}$.
 It follows from the maximum principle 
that $j \mapsto \f_{t,j}$ is non-increasing. 
We can thus set
$$
\f_t:=\lim_{j \rightarrow +\infty} \f_{t,j} \in \E^1(X,\omega).
$$

It follows again from the maximum principle that $\f_t$ is independent of the choice of the approximants
$\f_0^{(j)}, \f_1^{(j)}$:
if we set as previously $\Phi(x,z):=\f_t(x)$, with $z=t+is$, then $\Phi$ is a maximal
$\omega$-psh function in $X \times A$, as a decreasing limit of maximal $\omega$-psh functions.
It is thus the unique maximal $\omega$-psh function in $X \times \dot{A}$ with boundary values
$\f_0,\f_1$.
We call it the (unique) {\it weak geodesic} joining $\f_0$ to $\f_1$. 

The $\f_t$'s form a family of finite energy functions, since 
$t\mapsto E(\f_{t,j})$ is affine hence
$$
 (1-t) E(\f_0) +t E(\f_1) \leq E(\f_{t,j})
\text{ for all } j \in \N.
$$
  
These weak geodesics are again {\it metric geodesics}:

\begin{thm}
The space $(\overline{\H},d)$ is a CAT(0) space.
More precisely, given $\f_0,\f_1 \in \overline{\H}$, the weak geodesic $\Phi$ joining 
$\f_0$ to $\f_1$  satisfies,
 for all $t,s \in [0,1]$,
$$
d(\Phi_t,\Phi_s)=|t-s| \, d(\Phi_0,\Phi_1).
$$
\end{thm}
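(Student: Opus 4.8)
The plan is to reduce the statement to the already-established facts about \emph{bounded} geodesics (Propositions \ref{prop:Bo} and \ref{prop:bddmin}) together with the continuity of $d$ along decreasing sequences and the Pythagorean-type decomposition results. Recall that for $\f_0,\f_1\in\overline{\H}=\E^2(X,\omega)$ the weak geodesic $\Phi_t$ is obtained as the decreasing limit $\Phi_t=\lim_{j\to+\infty}\Phi_{t,j}$, where $\Phi_{t,j}$ is the bounded geodesic joining smooth approximants $\f_0^{(j)},\f_1^{(j)}\searrow\f_0,\f_1$. First I would fix $t,s\in[0,1]$ and recall from Proposition \ref{prop:bddmin} that, at the approximate level,
$$
d(\Phi_{t,j},\Phi_{s,j})=|t-s|\,d(\Phi_{0,j},\Phi_{1,j})=|t-s|\,d(\f_0^{(j)},\f_1^{(j)}).
$$
Since $\f_0^{(j)}\searrow\f_0$ and $\f_1^{(j)}\searrow\f_1$ in $\E^2(X,\omega)$, the right-hand side converges to $|t-s|\,d(\f_0,\f_1)$ by continuity of $d$ along decreasing sequences (property iii) of the extension of $d$ to $\E^2(X,\omega)$). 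So the whole point is to pass to the limit on the left-hand side and obtain $d(\Phi_{t,j},\Phi_{s,j})\to d(\Phi_t,\Phi_s)$.

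The key observation for that last convergence is that, for each fixed $t$, the sequence $(\Phi_{t,j})_j$ is a decreasing sequence of $\omega$-psh functions (bounded, in fact smooth for $j$ in the smooth case) with limit $\Phi_t$, and one needs to know that $\Phi_t\in\E^2(X,\omega)$ so that $d(\cdot,\cdot)$ makes sense as the extended distance $D$ on $\E^2(X,\omega)$ and is continuous along such decreasing sequences. The finite-energy statement already noted in the text — that $(1-t)E(\f_0)+tE(\f_1)\le E(\Phi_{t,j})$ for all $j$, hence $\Phi_t\in\E^1(X,\omega)$ — handles the $\E^1$ membership; to upgrade to $\E^2$ one argues exactly as in the proof that $(\E^2_{norm},d)$ is complete: the uniform bound $d(\Phi_{0,j},\Phi_{t,j})\le t\,d(\f_0^{(j)},\f_1^{(j)})\le C$ together with Proposition \ref{prop:upperbound} (more precisely Corollary \ref{cor:min}) gives a uniform bound on $I_2(0,\Phi_{t,j})$ after normalizing, and Fatou/Hartogs then place $\Phi_t$ in $\E^2_{norm}(X,\omega)$. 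Once $\Phi_t,\Phi_s\in\E^2(X,\omega)$ are identified as decreasing limits, property iii) of the proposition extending $d$ to $\E^2(X,\omega)$ gives $d(\Phi_{t,j},\Phi_{s,j})\to d(\Phi_t,\Phi_s)$ directly, and combining with the computation above yields $d(\Phi_t,\Phi_s)=|t-s|\,d(\Phi_0,\Phi_1)$.

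For the CAT(0) assertion I would invoke that $(\overline{\H},d)$ is a complete geodesic space — completeness is the content of the preceding proposition on $(\E^2_{norm}(X,\omega),d)$, and the geodesic property is exactly what we have just proved, every pair of points being joined by the weak geodesic $\Phi_t$ realizing $d(\Phi_t,\Phi_s)=|t-s|\,d(\Phi_0,\Phi_1)$. The non-positive curvature in the sense of Alexandrov then follows by approximation from Chen's theorem (Theorem \ref{thm:chen}), which asserts that $(\H,d)$ satisfies the CAT(0) comparison inequality: given three points in $\E^2(X,\omega)$, approximate each by a decreasing sequence in $\H$, apply the comparison inequality for the corresponding bounded geodesic triangles (which again reduces to $\H$ by the monotone approximation of bounded geodesics used in Proposition \ref{prop:bddmin}), and pass to the limit using continuity of $d$ along decreasing sequences; since the comparison inequality is a closed condition, it survives the limit. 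The main obstacle, and the only genuinely delicate point, is the interchange of the two limits — in $j$ (the approximants) and in the geodesic parameter — which is precisely why one needs the a priori $\E^2$-bound on $\Phi_t$ uniform in $j$ and the continuity of the extended distance $d$ along decreasing sequences; everything else is bookkeeping with the triangle inequality and the already-proven comparisons between $d$, $I$ and $I_2$.
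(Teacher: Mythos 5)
Your argument is correct and, for the length-minimizing identity, is exactly the route the paper intends: the paper's proof of that part simply says it is ``very similar to Proposition \ref{prop:bddmin} and left to the reader'', and what you write out --- the identity $d(\Phi_{t,j},\Phi_{s,j})=|t-s|\,d(\f_0^{(j)},\f_1^{(j)})$ for the approximating bounded geodesics together with continuity of the extended distance along decreasing sequences --- is precisely that argument, with the added (and welcome) verification that $\Phi_t$ actually lies in $\E^2(X,\omega)$, a point the paper glosses over (the text preceding the theorem only records $\Phi_t\in\E^1(X,\omega)$). Where you genuinely diverge is the curvature step: the paper does not re-run the Alexandrov comparison inequality in the limit, but instead quotes Calabi--Chen's result that $(\H,d)$ satisfies the CN inequality of Bruhat--Tits and then invokes \cite[Exercise 1.9.1.c and Corollary 3.11]{BH99}, which say that the CN inequality passes to the metric completion and that a complete geodesic space satisfying it is CAT(0). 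That route is slightly more robust than yours: the CN inequality is a condition on points and approximate midpoints only, so it survives completion with no need to control how whole geodesic triangles behave under the double limit (in $j$ and in the geodesic parameter), which is exactly the delicate interchange you flag at the end of your proposal. Your direct approximation of comparison triangles does work here, because you have already established that the weak geodesics are the metric geodesics and that $d$ is continuous along the decreasing approximations, but it makes the curvature step depend on the geodesic step in a way the paper's citation avoids.
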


\begin{proof}
Complete CAT(0) spaces are also called Hadamard spaces. Recall that a CAT(0) space is a geodesic space which has non positive curvature in the sense of Alexandrov. Hadamard spaces enjoy many interesting properties (uniqueness of geodesics, contractibility, convexity properties,...see \cite{BH99}).

The proof that $(\overline{\H},d)$ is a geodesic space
is very similar to that of Proposition \ref{prop:bddmin} and left to the reader.
Note that $(\overline{\H},d)$ is a complete path metric space, being the completion of the path metric space
$(\H,d)$. The Hopf-Rinow-Cohn-Vossen theorem (see \cite[Proposition 1.3.7]{BH99}) insures that
a complete {\it locally compact} path metric space is automatically a geodesic space.
Here $(\overline{\H},d)$ is not locally compact (it is merely locally weakly compact), but we have 
a natural candidate for the minimizing geodesics.

We finally note that Calabi and Chen proved in \cite[Theorem 1.1]{CC02} that $(\H,d)$
satisfies the CN inequality of Bruhat-Tits \cite{BT72}. It follows therefore from
\cite[Exercise 1.9.1.c and Corollary 3.11]{BH99} that 
 $(\overline{\H},d)$ is a CAT(0) space.
\end{proof}

\section{The toric case} \label{sec:toric}

Recall that a compact K\"ahler {\it toric} manifold $(X,\omega,T)$ is an equivariant compactification
of the torus $T=(\C^*)^n$ equipped with a $T$-invariant K\"ahler metric $\omega$ which writes
$$
\omega=dd^c \p
\text{ in } (\C^*)^n,
$$
with $\p$ $T$-invariant hence  $\p(z)=F \circ L(z)$ where
$$
L:z \in (\C^*)^n \mapsto (\log |z_1|,\cdots, \log|z_n|) \in \R^n
$$
and $F:\R^n \rightarrow \R$ is strictly convex.

The celebrated Atiyah-Guillemin-Sternberg theorem asserts that the moment map 
$\nabla F:\R^n \rightarrow \R^n$ sends $\R^n$ to a bounded convex polytope
$$
P=\{ \ell_i(s) \geq 0, \; 1 \leq i \leq d \} \subset \R^n
$$
where $d \geq n+1$ is the number of $(n-1)$-dimensional faces of $P$,
$$
\ell_i(s)=\langle s,u_i \rangle -\lambda_i,
$$
with $\l_i \in \R$ and $u_i$ is a primitive element of $\Z^n$, normal to the $i^{th}$
$(n-1)$-dimensional face of $P$.

Delzant observed in \cite{Del88} that in this case $P$ is "Delzant", i.e. there are exactly 
$n$ faces of dimension $(n-1)$ meeting at each vertex, and the corresponding $u_j$'s form a 
$\Z$-basis of $\Z^n$. He conversely showed that there is exactly one (up to symplectomorphism)
compact toric K\"ahler manifold $(X_P,\{\omega_P\},T)$ associated to a Delzant polytope $P \subset \R^n$.
Here $\{ \omega_P\}$ denotes the cohomology class of the $T$-invariant K\"ahler form  $\omega_P$.
Let 
$$
G(s):=\sup_{x \in \R^n} \{ \langle x,s \rangle-F(x) \}
$$
denote the Legendre transform of $F$. Observe that  $G=+\infty$ in $\R^n \setminus P$ and for
$s \in P=\nabla F(\R^n)$,
$$
G(s)= \langle x,s \rangle-F(x) 
\; \text{ with } \; 
\nabla F(x)=s \Leftrightarrow \nabla G(s)=x.
$$
Guillemin observed in \cite{Gui94} that a "natural" representative of the cohomology class $\{\omega_P\}$
is given by
$$
G(s)=\frac{1}{2} \left\{ \sum_{i=1}^d \ell_i(s) \log \ell_i(s) +\ell_{\infty}(s)\log \ell_{\infty}(s) \right\}
$$
where $\ell_{\infty}(s)=\sum_{i=1}^d \langle s,u_i \rangle$. 
We refer the reader to \cite{CDG03} for a neat proof of this beautiful Guillemin formula.

\begin{exa} \label{exa:proj}
When $X=\C\P^n$ and $\omega$ is the Fubini-Study K\"ahler form, then
$$
F(x)=\frac{1}{2} \log \left[ 1+\sum_{i=1}^n e^{2x_i} \right],
$$
$P=\nabla F(\R^n)$ is the simplex
$$
P=\left\{s_i \geq 0, \, 1 \leq i \leq n \text{ and } \sum_{i=1}^n s_i \leq 1\right\},
$$
thus $d=n+1$, 
$$
\ell_i(s)=s_i, \; \l_i=0, \; u_i=e_i 
\text{ for } 1 \leq i \leq n
$$
and
$$
\ell_{n+1}(s)=1-\sum_{i=1}^n s_i, \; \l_{n+1}=-1, \, e_{n+1}=-\sum_{j=1}^n e_j
$$
and $\ell_{\infty} \equiv 0$ so that
$$
G(s)=\frac{1}{2} \left\{ \sum_{i=1}^n s_i \log s_i+\left(1-\sum_{j=1}^n s_j \right)
\log \left( 1-\sum_{j=1}^n s_j \right) \right\}.
$$
\end{exa}

\subsection{Toric geodesics}

Let $(X,\omega,T)$ be a compact toric manifold. If $\f_0,\f_1 \in \H$ are both 
$T$-invariant, it follows from the uniqueness that the geodesic $(\f_t)_{0 \leq t \leq 1}$
consists of $T$-invariant functions. Let $F_t$ denote the corresponding potentials
in $\R^n$ so that
$$
F_t \circ L=F \circ L+\f_t 
\text{ in } (\C^*)^n.
$$

\begin{prop} \cite{Guan99}
The map $(x,t) \mapsto \f_t(x)$ is smooth and corresponds to the Legendre transform of an affine path
on $P$.
\end{prop}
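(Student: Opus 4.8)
The plan is to solve the Semmes equation (Proposition \ref{prop:Semmes}) explicitly on the open torus by passing to the polytope via the Legendre transform, which turns the homogeneous complex Monge--Amp\`ere operator into an elementary determinant, and then to invoke uniqueness of the associated Dirichlet problem. Write $F_0=F+\f_0$ and $F_1=F+\f_1$ as functions on $\R^n$ (using that $\f_0,\f_1$ are $T$-invariant), and let $G_0,G_1$ be their Legendre transforms; by the Guillemin--Abreu correspondence \cite{Gui94,Don02,CDG03} these are strictly convex on the moment polytope $P$ and differ from Guillemin's potential $G_P$ by functions smooth up to $\overline P$. Set $G_t:=(1-t)G_0+tG_1$. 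Then $G_t$ is again strictly convex on $P$ and $G_t-G_P\in C^\infty(\overline P)$, so it is a legitimate symplectic potential for every $t\in[0,1]$; let $F_t$ be its Legendre transform and define $\f_t$ on $(\C^*)^n$ by $F_t\circ L=F\circ L+\f_t$. I then have to show that $(x,t)\mapsto\f_t(x)$ extends smoothly to $X\times[0,1]$ and is the Mabuchi geodesic from $\f_0$ to $\f_1$; by construction $t\mapsto G_t$ is the advertised affine path.

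For the computation I write $w=e^{t+is}$ for the coordinate on the annulus $A$ (to avoid clashing with the coordinate $z$ on $(\C^*)^n$) and consider $\Phi(z,w):=\f_t(z)$, which is $T$-invariant. In the coordinates $u=L(z)\in\R^n$, $t=\log|w|$, the full potential is $\tilde F(u,t):=F_t(u)=F(u)+\f_t(u)$, and on $(\C^*)^n\times A$ one has $(\omega+dd^c_{z,w}\Phi)^{n+1}=c_n\,\det\big(D^2_{(u,t)}\tilde F\big)\,\mathrm{vol}$ with $c_n>0$ and $\mathrm{vol}$ a flat volume form. Now $\tilde F(u,t)=\sup_{s\in P}\{\langle u,s\rangle-G_0(s)-t\,h(s)\}$ where $h:=G_1-G_0$; since $G_t$ is smooth and strictly convex on $P$ the maximizer $s^*=s^*(u,t)$ is smooth in $(u,t)$ by the implicit function theorem, and $\nabla_u\tilde F=s^*$, $\partial_t\tilde F=-h(s^*)$. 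Differentiating these two identities gives, with $A:=D^2_u\tilde F=D_us^*=(D^2 G_t)^{-1}\!\mid_{s^*}>0$,
\begin{equation*}
D^2_{(u,t)}\tilde F=\begin{pmatrix} A & -A\,\nabla h(s^*)\\ -\nabla h(s^*)^{T}A & \nabla h(s^*)^{T}A\,\nabla h(s^*)\end{pmatrix},
\end{equation*}
a matrix whose quadratic form on $(\xi,\eta)\in\R^n\times\R$ equals $(\xi-\eta\nabla h(s^*))^{T}A(\xi-\eta\nabla h(s^*))\ge0$ and which annihilates $\big(\nabla h(s^*),1\big)$. Hence $D^2_{(u,t)}\tilde F\ge0$ and $\det D^2_{(u,t)}\tilde F\equiv0$: the function $\Phi$ is $\omega$-psh on $(\C^*)^n\times A$ and solves the homogeneous complex Monge--Amp\`ere equation there.

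Since $G_t$ is bounded on $P$ one gets $|F_t-F_{t'}|\le|t-t'|\,\|h\|_{L^\infty(P)}$, so $\f_t$ is uniformly bounded and, being bounded and $\omega$-psh off the toric divisors, extends to a bounded $\omega$-psh function $\Phi$ on $X\times A$ solving $(\omega+dd^c_{x,w}\Phi)^{n+1}=0$ on the dense open set $(\C^*)^n\times A$, hence on all of $X\times A$ by continuity, with boundary values $\f_0$ at $|w|=1$ and $\f_1$ at $|w|=e$. By the uniqueness in Proposition \ref{prop:Bo}, $\Phi$ is the bounded geodesic joining $\f_0$ to $\f_1$, which is length minimizing by Proposition \ref{prop:bddmin} and therefore coincides with the Mabuchi geodesic of Theorem \ref{thm:chen}. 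Finally, because $G_t$ is a genuine symplectic potential depending affinely (hence smoothly) on $t$, the Guillemin--Abreu correspondence shows that each $\f_t$ lies in $\H$ and that $(x,t)\mapsto\f_t(x)$ is smooth on $X\times[0,1]$. This proves the Proposition, the geodesic corresponding to the affine path $t\mapsto(1-t)G_0+tG_1$ on $P$.

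The main obstacle I anticipate is not the Monge--Amp\`ere computation above, which reduces to the linear-algebra identity $D^2_{(u,t)}\tilde F\cdot(\nabla h(s^*),1)=0$, but the boundary analysis packaged into the Guillemin--Abreu correspondence: one must check that adding a function in $C^\infty(\overline P)$ and taking convex combinations preserves the precise singular behaviour of $D^2G$ at $\partial P$ that is needed both for $F_t$ to be finite and smooth on all of $\R^n$ and for $\f_t$ to extend smoothly across the toric divisors of $X$. As a consistency check one notes that in these coordinates the Mabuchi norm is $\langle\dot\f_t,\dot\f_t\rangle_{\f_t}=V_\a^{-1}\int_P\dot G_t(s)^2\,ds$, so the space of toric K\"ahler metrics carries the flat $L^2(P)$ metric, whose geodesics are exactly the straight segments $t\mapsto(1-t)G_0+tG_1$.
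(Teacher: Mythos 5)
Your argument is correct, but it runs in the opposite direction from the paper's. The paper (following Guan) takes the Mabuchi geodesic $(\f_t)$ as given by Chen's theorem, observes by uniqueness that it is $T$-invariant, and then simply differentiates the defining relation $G_t(s)=\langle x_t,s\rangle-F_t(x_t)$, $\nabla F_t(x_t)=s$, twice in $t$: one finds $\dot G_t=-\dot F_t(x_t)$ and $\ddot G_t=-\bigl(\ddot F_t-\langle\nabla\dot F_t,(D^2F_t)^{-1}\nabla\dot F_t\rangle\bigr)$, which vanishes precisely because of the geodesic equation $\ddot\f=\|\nabla\dot\f\|^2_\f$ read in the torus coordinates. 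You instead start from the affine path $G_t=(1-t)G_0+tG_1$, Legendre-transform it back, and verify via the rank-one degeneracy of $D^2_{(u,t)}\tilde F$ that the resulting $\Phi$ is a nonnegative solution of the homogeneous Monge--Amp\`ere equation, then identify it with the Mabuchi geodesic through the uniqueness in Proposition \ref{prop:Bo}. Your linear-algebra computation is right (the kernel vector $(\nabla h(s^*),1)$ and the positive semi-definiteness of the block matrix are exactly what is needed), and your route has the merit of giving an independent existence and regularity proof for toric geodesics, essentially bypassing Chen's theorem except for the identification step. The price is the boundary analysis you correctly flag: one must know that the Abreu--Guillemin boundary conditions on symplectic potentials are convex (so that $G_t$ is again a legitimate potential and $\f_t\in\H$ with joint smoothness in $(x,t)$ across the toric divisors); this is standard but is the only genuinely nontrivial input, whereas the paper's two-line differentiation avoids it entirely by assuming the geodesic already exists and is smooth.
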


In other words the Legendre transform $G_t$ of $F_t$ is affine in $t$. The proof is elementary, it suffices to differentiate twice the defining equation for $G_t$. In a similar vein we obtain an explicit formula for the Mabuchi distance between $\f_0$ and $\f_1$:

\begin{prop}  \label{pro:distancetorique}
$$
d(\f_0,\f_1)=||G_1-G_0||_{L^2(P)}=\sqrt{\int_P (G_1-G_0)^2(s) ds}.
$$
\end{prop}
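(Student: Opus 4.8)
The plan is to reduce the problem to Chen's length formula (Theorem~\ref{thm:chen}) and then to transport the resulting integral to the polytope $P$ via Legendre duality. By \cite{Guan99} together with the uniqueness of geodesics (Theorem~\ref{thm:chen}), the geodesic $(\f_t)_{0\le t\le1}$ joining $\f_0$ to $\f_1$ is $T$-invariant and smooth, and corresponds under the Legendre transform to the affine path $G_t=(1-t)G_0+tG_1$ on $P$, where $G_t$ is the Legendre transform of $F_t:=F+\f_t$ (identifying the $T$-invariant $\f_t$ with a function of $L(z)\in\R^n$). Being a genuine smooth geodesic, $(\f_t)$ satisfies
$$
d(\f_0,\f_1)^2=\int_X\dot{\f}_t^2\,MA(\f_t)
$$
for every $t\in[0,1]$, so it suffices to evaluate this integral. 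Note that $\dot G_t=G_1-G_0$ is independent of $t$, and that $G_i-G$ is bounded on $P$ (since $\f_i$ is bounded on the compact $X$), so that $(G_1-G_0)^2$ is bounded and Lebesgue-integrable on $P$ and the right-hand side of the asserted identity is finite.

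The first real step is to relate $\dot\f_t$ to $\dot G_t$. Writing $G_t(s)=\sup_{x\in\R^n}\{\langle x,s\rangle-F_t(x)\}$, with the supremum attained at the unique $x$ satisfying $\nabla F_t(x)=s$, the envelope theorem — legitimate here because $F_t$ is smooth, strictly convex in $x$, and smooth in $t$ along the whole path — gives, for $s$ in the interior of $P$,
$$
\dot G_t(s)=-\dot F_t\bigl((\nabla F_t)^{-1}(s)\bigr)=-\dot\f_t\bigl((\nabla F_t)^{-1}(s)\bigr),
$$
that is, $\dot\f_t=-\dot G_t\circ\nabla F_t=-(G_1-G_0)\circ\nabla F_t$ as functions on $\R^n$. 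Hence $\dot{\f}_t^2$, viewed as a function on $X$, is the pull-back under the moment map $\nabla F_t$ of $(G_1-G_0)^2$ on $P$.

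It then remains to carry out the change of variables $s=\nabla F_t(x)$ from $\R^n$ onto the interior of $P$. Because $\f_t$ is $T$-invariant, $MA(\f_t)$ is the push-forward under $L$ of a dimensional constant times $\det(\Hess F_t)(x)\,dx$ on $\R^n$; since the Jacobian of $\nabla F_t$ is $\Hess F_t$ and $\nabla F_t$ is a diffeomorphism of $\R^n$ onto the interior of $P$ (Atiyah-Guillemin-Sternberg), the probability measure $MA(\f_t)$ pushes forward under $\nabla F_t\circ L$ to Lebesgue measure $ds$ on $P$ with the normalizations in force. Combining this with the formula for $\dot\f_t$ gives
$$
d(\f_0,\f_1)^2=\int_P(G_1-G_0)^2(s)\,ds,
$$
which is the claimed identity; observe that the result is visibly independent of $t$, a useful consistency check. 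The only delicate points are the justification of the envelope-theorem differentiation — supplied by the smoothness and strict convexity of $F_t$ along the entire geodesic — and the routine bookkeeping of the normalizing constants ($n!$, $\mathrm{vol}(P)$, the choice of $d^c$) in the change of variables. Since the toric setting has linearized the Monge-Amp\`ere operator, there is no genuine analytic obstacle.
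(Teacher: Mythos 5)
Your proof is correct and follows essentially the same route as the paper's: reduce to Chen's formula $d(\f_0,\f_1)^2=\int_X\dot\f_t^2\,MA(\f_t)$, use Guan's affineness of $G_t$ and the Legendre duality $\dot G_t=-\dot F_t\circ(\nabla F_t)^{-1}$, then change variables via the identity $MA_{\R}(F_t)=(\nabla F_t)^*ds$ sending the Monge--Amp\`ere measure to Lebesgue measure on $P$. The normalization bookkeeping you defer at the end is treated with the same (light) touch in the paper itself.
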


\begin{proof}  
Recall that
$$
d(\f_0,\f_1)=\sqrt{\int_X (\dot{\f_0})^2 MA(\f_0)}.
$$
Now $F_t \circ L=F \circ L+\f_t$ has Legendre transform $G_t=tG_1+(1-t)G_0$. Thus
$\dot{\f_t}=\dot{F_t} \circ L$ with
$$
G_t(s)=\langle x_t,s \rangle -F_t(x_t)
\text{ with } s=\nabla F_t(x_t)
$$
hence 
$
\dot{G_t}(s)=-\dot{F_t}(x)
$
and we infer
$$
d(\f_0,\f_1)^2=\int_{(\C^*)^n} (\dot F_0)^2 MA(F_0 \circ L).
$$
Observe that
$$
\frac{\partial^2 (F_0 \circ L)}{\partial z_i \partial \overline{z_j}}=
\frac{1}{\overline{z_i} z_j} \cdot \frac{\partial^2 F_0}{\partial z_i \partial \overline{z_j}} \circ L 
\; \text{ in } \; (\C^*)^n
$$
hence
$$
\det \left( \frac{\partial^2 (F_0 \circ L)}{\partial z_i \partial \overline{z_j}} \right)
=\frac{1}{\Pi_j |z_j|^2} \cdot MA_{\R}(F_0) \circ L,
$$
where $MA_{\R}$ denotes the real Monge-Amp\`ere measure (in the Alexandrov sense, see
\cite{Gut01}) of the convex function $F_0$. Thus
$$
\int_{(\C^*)^n} (\dot F_0)^2 MA(F_0 \circ L)=\int_{\R^n} (\dot F_0)^2 MA_{\R}(F_0) .
$$

Now $\dot{F_0}=-\dot{G_0} \circ \nabla F_0$ and $MA_{\R}(F_0)=(\nabla F_0)^* ds$ therefore
$$
\int_{\R^n} (\dot F_0)^2 MA_{\R}(F_0)=\int_P (\dot{G_0})^2(s) ds=\int_P (G_1-G_0)^2(s)ds.
$$
\end{proof}

\begin{exa} \label{exa:explicit}
Assume $X=\C\P^1$ is the Riemann sphere and $\omega$ is the Fubini-Study K\"ahler form.
Let $\f_0$ be the toric function associated to the convex potential
$$
F_0(x)=\max(x,0)
\text{ so that } 
G_0(s) \equiv 0
\text{ on the simplex } P=[0,1].
$$
Observe that $\omega_0=dd^c F_0 \circ L$ is the (normalized) Lebesgue measure on the unit circle
$S^1 \subset \C^* \subset \C\P^1$.

We consider $\f_1=\f_j$ a sequence of toric potentials defined by the convex functions
$$
F_j(x)=(1-\e_j) F_0(x)+\e_j \max(x,-C_j),
$$
where $\e_j$ decreases to $0$, while $C_j$ increases to $+\infty$. A straightforward computation yields
$$
G_j(s)=\max( C_j[\e_j-s], 0).
$$
Therefore
$$
d(\f_j,\f_0)=\frac{C_j \e_j^{3/2}}{\sqrt{3}}
$$
We thus obtain in this case, as $j \rightarrow +\infty$,
\begin{itemize}
\item $\f_j \longrightarrow \f_0$ in $L^1$ iff $\e_j \rightarrow 0$;
\item $\f_j \longrightarrow \f_0$ in $L^{\infty}$ iff $\e_j C_j\rightarrow 0$;
\item $\f_j \longrightarrow \f_0$ in $(\E^1,I_1)$ iff $\e_j^2C_j \rightarrow 0$;
\item $\f_j \longrightarrow \f_0$ in $(\E^2,I_2)$ iff $\e_j^3C_j^2 \rightarrow 0$;
\item $\f_j \longrightarrow \f_0$ in $(\H,d)$ iff $\e_j^3C_j^2 \rightarrow 0$.
\end{itemize}

The convergence in $(\E^1,I_1)$ is here (i.e. in dimension $n=1$)
the convergence in the Sobolev norm $W^{1,2}$. For $\e_j=1/j$ and $C_j=j^{3/2}$ we 
therefore obtain an example of a sequence which converges in the Sobolev sense
but not in the Mabuchi metric.
\end{exa}

\subsection{Toric singularities}

Let $\f \in \H$ be a toric potential. We are going to read off the singular behavior of $\f$ from the
integrability properties of the Legendre transform of its associated convex potential.

We let $F_\f$ and $G_\f$ denote the corresponding convex function and its Legendre transform. The function $\f$ is bounded (resp. continuous) if and only if so is
$F_\f-F$ on $\R^n$, since $F_\f \circ L=F \circ L+\f$, if and only if so is $G_\f$  on $P$, as
$G$ (Guillemin's potential) is continuous on $P$.

The same conclusion holds if we take as a reference potential the support function $F_P$ of $P$, defined by
$$
F_P(x):=\sup_{s \in P} \langle s, x \rangle.
$$
It is the Legendre transform of the function $G_P$ which is identically $0$ on $P$ and $+\infty$ in $\R^n \setminus P$. We can similarly understand finite energy classes:

\begin{prop} \label{pro:energytorique}
$$
\f \in PSH_{tor}(X,\omega) \cap L^{\infty}(X) \Longleftrightarrow G_\f \in L^{\infty}(P).
$$
$$
\f \in \E^q_{tor}(X,\omega) \Longleftrightarrow G_\f \in L^q(P).
$$
\end{prop}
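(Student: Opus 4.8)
The plan is to move everything to the polytope via the Legendre correspondence $\f\leftrightarrow F_\f\leftrightarrow G_\f=F_\f^\ast$, where $F_\f\circ L=F\circ L+\f$ and $G_\f$ is finite on $P^\circ$ and $\equiv+\infty$ off $P$; normalising $\sup_X\f=0$ gives $F_\f\le F$ on $\R^n$ and $G_\f\ge G$ on $P$. Since Guillemin's potential $G$ extends continuously to the compact set $P$, $M:=\|G\|_{L^\infty(P)}<\infty$. With this in hand the first equivalence is the computation already sketched above the statement: the Legendre transform reverses order and is a contraction for the sup norm, so $\f\in L^\infty(X)\Leftrightarrow F_\f-F\in L^\infty(\R^n)\Leftrightarrow G_\f-G\in L^\infty(P)\Leftrightarrow G_\f\in L^\infty(P)$, the last step because $|G|\le M$.

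For the $\E^q$ statement I would start from the change of variables in the proof of Proposition~\ref{pro:distancetorique}. As $\f\in\E(X,\omega)$ has zero Lelong numbers, $\MA(\f)$ puts no mass on the toric divisor, and on $(\C^\ast)^n$ one has $\omega_\f^n\leftrightarrow\MA_\R(F_\f)$ under $L$ while $\nabla F_\f$ pushes $\MA_\R(F_\f)$ onto Lebesgue measure $ds$ on $P$; hence
$$
\int_X(-\f)^q\,\MA(\f)=\frac{1}{|P|}\int_{\R^n}(F-F_\f)^q\,\MA_\R(F_\f)=\frac{1}{|P|}\int_P w_\f(s)^q\,ds,
$$
where $w_\f(s):=(F-F_\f)\big(\nabla G_\f(s)\big)\ge 0$ is ``$-\f$ read on $P$ through the moment map of $\omega_\f$''. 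Thus $\f\in\E^q_{tor}\Leftrightarrow w_\f\in L^q(P)$, and it remains to compare the $L^q(P)$-norms of $w_\f$ and of $G_\f$. The easy half: writing $x=\nabla G_\f(s)$, Legendre duality gives $F_\f(x)=\langle x,s\rangle-G_\f(s)$, while $F(x)=\sup_{t\in P}(\langle x,t\rangle-G(t))\ge\langle x,s\rangle-G(s)$, so $w_\f(s)\ge(G_\f-G)(s)$ pointwise. With $|G|\le M$ and convexity of $t\mapsto t^q$ this yields $\int_P|G_\f|^q\,ds\lesssim_q\int_P(G_\f-G)^q\,ds+M^q|P|\le|P|\int_X(-\f)^q\,\MA(\f)+M^q|P|$, proving $\f\in\E^q_{tor}\Rightarrow G_\f\in L^q(P)$; the domination principle (Proposition~\ref{pro:dp}) takes care of any separation issue.

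The converse needs the matching bound $\|w_\f\|_{L^q(P)}\lesssim\|G_\f-G\|_{L^q(P)}$, and I expect this to be the main obstacle. Setting $x=\nabla G_\f(s)$ and $\tau=\nabla F(x)\in P$, a short Legendre computation yields the exact identity $w_\f(s)=(G_\f-G)(s)+B_G(s,\tau)$, where $B_G(s,\tau)=G(s)-G(\tau)-\langle\nabla G(\tau),s-\tau\rangle\ge 0$ is the Bregman divergence of $G$; since $|G|\le M$, everything reduces to bounding $\langle\nabla G_\f(s),\tau-s\rangle$ by a constant multiple of $(G_\f-G)(s)$ up to an additive constant. The naive convexity estimate $\langle\nabla G_\f(s),\tau-s\rangle\le G_\f(\tau)-G_\f(s)$ is hopelessly lossy near $\partial P$; instead one should use that $\nabla G_\f(s)$ is essentially normal to the nearest face of $P$, with magnitude comparable to $(G_\f-G)(s)/\mathrm{dist}(s,\partial P)$, giving $\langle\nabla G_\f(s),v-s\rangle\asymp(G_\f-G)(s)$ for the relevant vertex $v$, and then integrate a Hardy--Poincar\'e type inequality on the polytope to pass from this pointwise control (which holds outside the region where $G_\f$ blows up faster than is compatible with $G_\f\in L^q(P)$) to the $L^q$ bound. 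If that becomes awkward I would instead run the comparison on the canonical approximants $\f_j=\max(\f,-j)$ (so $F_{\f_j}=\max(F_\f,F-j)$, $G_{\f_j}\nearrow G_\f$, and $\f\in\E^q_{tor}\Leftrightarrow\sup_j\int_X(-\f_j)^q\,\MA(\f_j)<\infty$), combining the elementary lower bound above with \cite[Lemma~3.5]{GZ07}-type monotonicity and the monotone convergence theorem to deduce a uniform bound on $\int_P w_{\f_j}^q\,ds$ from one on $\int_P(G_{\f_j}-G)^q\,ds$. Once $w_\f$ and $G_\f-G$ are known to have equivalent $L^q(P)$-norms, both halves of the statement follow.
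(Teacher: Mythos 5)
Your first equivalence and the implication $\f \in \E^q_{tor}(X,\omega) \Rightarrow G_\f \in L^q(P)$ are correct and follow the paper's route: transport the energy to $P$ by the change of variables $s=\nabla F_\f(x)$ and use the one-line Legendre inequality $F(x)-F_\f(x)\geq G_\f(s)-G(s)$ at $x=\nabla G_\f(s)$. The problem is the converse, which you yourself flag as ``the main obstacle'' and then do not prove. Your Bregman identity $w_\f(s)=(G_\f-G)(s)+B_G(s,\tau)$ with $\tau=\nabla F(\nabla G_\f(s))$ is exact, and it correctly reduces everything to bounding $\int_P|\langle \nabla G_\f(s),\tau-s\rangle|^q\,ds$ by $\|G_\f\|_{L^q(P)}^q$ up to constants. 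But the two strategies you offer for this do not close the gap. The claim that $\nabla G_\f(s)$ is essentially normal to the nearest face with magnitude $\asymp (G_\f-G)(s)/\mathrm{dist}(s,\partial P)$ is unjustified (a convex $G_\f$ with $G_\f\in L^q(P)$ can have a large tangential gradient component, and $\nabla G_\f\notin L^q(P)$ in general --- already $G(s)=-\log s$ on $(0,1)$ shows integrability of $G$ does not control integrability of $G'$), the ``Hardy--Poincar\'e type inequality'' is invoked by name only, and the fallback to the canonical approximants $\f_j=\max(\f,-j)$ merely restates the same comparison problem for bounded potentials, with the same uncontrolled cross term near $\partial P$; since your anchor $\tau$ moves with $s$, the pairing $\langle \nabla G_\f(s),\tau-s\rangle$ has no sign and no obvious convexity control.

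The paper's converse uses a different and decisive device: decompose $(\C^*)^n$ into the $2^n$ regions $\{|z_j|\lessgtr 1\}$, each anchored at a fixed point of the torus action, i.e.\ at a vertex $v$ of $P$ (in the model case of $\P^n$). On each such region the reference potential is uniformly bounded, and the Legendre change of variables leaves only the term $-\langle \nabla G_\f(s), s-v\rangle$ to control; anchored at a \emph{fixed} vertex this term has a sign and is dominated by convexity of $G_\f$ along the segment $[v,s]$, e.g.
$$
-\langle \nabla G_\f(s), s-v\rangle \;\leq\; 2\left( G_\f\!\left(\tfrac{s+v}{2}\right)-G_\f(s)\right),
\qquad
\int_P G_\f\!\left(\tfrac{s+v}{2}\right)^q ds \;\leq\; 2^n \|G_\f\|^q_{L^q(P)},
$$
the last step by dilating $P$ towards $v$. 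It is exactly this vertex-by-vertex anchoring plus dilation that converts the hypothesis $G_\f\in L^q(P)$ into a bound on the cross term, and it is the ingredient missing from your argument. Until it (or a substitute) is supplied, the implication $G_\f\in L^q(P)\Rightarrow \f\in\E^q_{tor}(X,\omega)$ is not established.
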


We refer the reader to \cite[Proposition 2.9]{BerBer13} for an elegant proof of this result when $q=1$.

\begin{proof}
We first show that $\f \in \E^q_{tor}(X,\omega) \Longrightarrow G_\f \in L^q(P)$. Approximating
$\f$ from above by a decreasing sequence of smooth strictly $\omega$-psh toric functions,
this boils down to show a uniform a priori bound
$$
||G_\f||_{L^q(P)} \leq \left( \int_X |\f_P-\f|^q MA(\f) \right)^{1/q}.
$$

We can assume without loss of generality that $F \leq F_P$ (since $\f$ is upper semi-continuous hence
bounded from above on $X$ which is compact).
Recall that $\f=(F_\f-F_P) \circ L$ in $(\C^*)^n$, where $F_P$ denotes a reference potential associated
to $\omega$. Changing variables and using the Legendre transform yields
\begin{eqnarray*}
\int_{(\C^*)^n} |\f-\f_P|^q MA(\f)&=&\int_{\R^n} |F-F_P|^q MA_{\R}(F) \\
&=&\int_P |F \circ \nabla G (s)-F_P \circ \nabla G(s)|^q ds,
\end{eqnarray*}
where
$F(x)=\langle x, s \rangle -G(s)$, with $\nabla G(s)=x$. Therefore
$$
F(\nabla G(s))=\langle \nabla G(s), s \rangle -G(s)
$$
and
\begin{eqnarray*}
F_P(\nabla G(s))-F(\nabla G(s))&=&G(s)-\{ \langle \nabla G(s), s \rangle-F_P \circ \nabla G(s) \} \\
&\geq& G(s)-G_P(s)=G(s) \geq 0,
\end{eqnarray*}
since $G_P(s)=\sup_{x \in \R^n} \{ \langle x, s \rangle-F_P (x) \} =0$ for $s \in P$. We infer
\begin{eqnarray*}
||G_\f||^q_{L^q(P)} &\leq& \int_P \left| F_P(\nabla G(s))-F(\nabla G(s)) \right|^q ds \\
& \leq & \int_X |\f_P-\f|^q MA(\f),
\end{eqnarray*}
as claimed.

We now take care of the converse implication. To simplify notations we only treat the case of the complex projective space, using the same notations as in Example \ref{exa:proj}. We want to get an
upper bound on $\int_X |\f_0-\f|^q MA(\f)$ involving $\int_P |G-G_0|^q(s)ds$, where
$$
F_0(x)=\frac{1}{2} \log \left[ 1+\sum_{i=1}^n e^{2x_i} \right]
$$
is the convex function associated to the Fubini-Study K\"ahler form,
$$
P=\left\{s_i \geq 0, \, 1 \leq i \leq n \text{ and } \sum_{i=1}^n s_i \leq 1\right\}
\text{ is the standard simplex}
$$
and
$$
G_0(s)=\frac{1}{2} \left\{ \sum_{i=1}^n s_i \log s_i+\left(1-\sum_{j=1}^n s_j \right)
\log \left( 1-\sum_{j=1}^n s_j \right) \right\}.
$$

We decompose $(\C^*)^n$ in $2^n$ pieces, according to whether $|z_j| \leq 1$ or  $\geq 1$.
By symmetry, it suffices to bound $\int_{\Delta^n} |\f_0-\f|^q MA(\f)$, where
$$
\Delta^n=\{ z \in (\C^*)^n, \, |z_j| \leq 1 \text{ for all } 1 \leq j \leq n \}
$$
is sent by $L$ to $\R_-^n$. Now $F_0$ is bounded on $\R_-^n$, 
$0 \leq F_0 \leq \log (n+1) /2$, so 
$$
\int_{\R_-^n} |F_0-F|^q MA_{\R}(F) 
$$
is bounded from above if and only if so is $\int_{\R_-^n} |F|^q MA_{\R}(F) $. Applying the Legendre transform yields
$$
\int_{\R_-^n} |F|^q MA_{\R}(F)=\int_{\nabla F(\R_-^n)} |G|^q(s) ds \leq ||G||^q_{L^q(P)},
$$
as desired.
\end{proof}

\begin{thm} \label{thm:toric}
The metric completion of $(\H_{tor},d)$ is $(\E^2_{tor},d)$.
\end{thm}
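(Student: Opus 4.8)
The plan is to transport the entire problem to the polytope $P$ via the Legendre transform, where $(\H_{tor},d)$ becomes an explicit subset of the Hilbert space $L^2(P,ds)$, and then to identify its $L^2$-closure.

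First I would record the dictionary. By Proposition \ref{pro:distancetorique} the assignment $\f\mapsto G_\f$ (Legendre transform of the convex potential $F_\f$ of $\omega_\f$) is an isometry of $(\H_{tor},d)$ onto a set $\mathcal{G}$ of smooth convex functions on $P$ (those arising as Legendre transforms of smooth strictly convex potentials of toric K\"ahler metrics), the distance on the target being $\|G_0-G_1\|_{L^2(P)}$. By Proposition \ref{pro:energytorique} with $q=2$, $\f\in\E^2_{tor}$ iff $G_\f\in L^2(P)$, and $G_\f$ is automatically convex. I would then extend the distance formula $d(\f_0,\f_1)=\|G_0-G_1\|_{L^2(P)}$ from $\H_{tor}$ to $\E^2_{tor}$: for $\f\in\E^2_{tor}$ and $\f^{(j)}\in\H_{tor}$ decreasing to $\f$ the potentials $F_{\f^{(j)}}$ decrease to $F_\f$, hence the Legendre transforms increase, $G_{\f^{(j)}}\nearrow G_\f$; since $G_{\f^{(1)}}\le G_{\f^{(j)}}\le G_\f$ with $G_{\f^{(1)}}$ bounded on $P$ and $G_\f\in L^2(P)$, dominated convergence gives $G_{\f^{(j)}}\to G_\f$ in $L^2(P)$, and the $\liminf$-definition of $d$ on $\E^2$ then yields $d(\f_0,\f_1)=\|G_0-G_1\|_{L^2(P)}$ for all $\f_0,\f_1\in\E^2_{tor}$.

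With the dictionary in hand the theorem follows from two facts. \emph{Density:} $\H_{tor}$ is $d$-dense in $\E^2_{tor}$. Given $\f\in\E^2_{tor}$, apply Demailly's regularization \cite{Dem92} equivariantly — averaging the smooth decreasing approximants over the compact torus $(S^1)^n$ — to obtain $\f^{(j)}\in\H_{tor}$ with $\f^{(j)}\searrow\f$; then $d(\f^{(j)},\f)=\|G_{\f^{(j)}}-G_\f\|_{L^2(P)}\to 0$ by the monotone/dominated convergence argument of the previous paragraph (alternatively one invokes Proposition \ref{prop:upperbound} together with \cite[Lemma 3.5]{GZ07} and \cite[Theorem 2.17]{BEGZ10} exactly as in the proof of the main theorem). \emph{Completeness:} $(\E^2_{tor},d)$ is complete, because $\E^2_{tor}$ is a $d$-closed subset of the complete space $(\E^2(\a),d)$: $d$-convergence dominates convergence in energy, hence $L^1$-convergence, and the $L^1$-limit of torus-invariant functions is torus-invariant, so a $d$-Cauchy sequence in $\E^2_{tor}$ has its $\E^2(\a)$-limit again in $\E^2_{tor}$. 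Since $\E^2_{tor}$ is complete and contains $\H_{tor}$ as a dense subset, $(\E^2_{tor},d)$ \emph{is} the metric completion of $(\H_{tor},d)$. In the polytope picture this says that the $L^2(P)$-closure of $\mathcal{G}$ is exactly the cone of convex functions lying in $L^2(P)$; if one prefers a proof internal to the polytope picture, the closedness of that cone in $L^2(P)$ is the statement that an $L^2$-convergent sequence of convex functions has, along a subsequence, an a.e. limit which — using local uniform boundedness obtained at $n+1$ affinely independent interior points where the limit is finite, together with the interior Lipschitz estimates for convex functions — agrees a.e.\ on $\mathrm{int}\,P$ with a genuine convex function.

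\textbf{Main obstacle.} The delicate point is the density step: producing, for an arbitrary finite-energy toric current, a decreasing sequence of \emph{genuine} toric K\"ahler potentials — equivalently, approximating an arbitrary convex function of $L^2(P)$ in the $L^2$-norm by Legendre transforms of smooth strictly convex functions obeying Guillemin's boundary behaviour — and verifying that the resulting $L^2(P)$-convergence of Legendre transforms is controlled enough to force $d(\f^{(j)},\f)\to0$. Once the equivariant form of Demailly's theorem is available this reduces to the dominated-convergence computation above; the only other point requiring care, if one bypasses the completeness of $(\E^2(\a),d)$, is the closedness of the convex cone in $L^2(P)$, which rests on the standard compactness properties of convex functions recalled above.
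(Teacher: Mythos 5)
Your proposal is correct and follows essentially the same route as the paper: both reduce the statement to the Legendre-transform dictionary of Propositions \ref{pro:distancetorique} and \ref{pro:energytorique}, which turn the Mabuchi distance into the $L^2(P)$-norm and identify $\E^2_{tor}$ with the convex functions in $L^2(P)$. The paper's own proof is a two-line appeal to these propositions together with the general main theorem, whereas you supply the density and completeness steps explicitly (equivariant regularization, $d$-closedness of $\E^2_{tor}$ in $(\E^2(\a),d)$); this is a more self-contained version of the same argument rather than a different one.
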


\begin{proof}
We have already observed that the metric completion of $(\H_{tor},d)$ can be identified to
a subset of $\E^1_{tor}$. It follows from 
Proposition \ref{pro:distancetorique} 
and Proposition \ref{pro:energytorique} that the only functions lying at finite distance from smooth toric potentials are those which belong to $\E^2_{tor}$.
\end{proof}

\subsection{Concluding remarks}

\subsubsection{Controlling the Sup}

One would like to bound from above the supremum of a function in $\H$ in terms of the Mabuchi distance.
It is natural to expect that there exists a constant $C>0$ such that for all $\f \in \H$,
$$
\sup_X \f  \leq C   \max(1, d(0,\f)  ).
$$
We explain how to establish such a bound in the toric setting:

\begin{lem}
There exists a constant $C_P>0$ such that for all convex functions $G$ defined on the
convex polytope $P$, 
$$
-\inf_P G \leq C_P \cdot ||G||_{L^1(P)}.
$$
\end{lem}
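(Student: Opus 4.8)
The plan is to use the convexity of $G$ to turn a single very negative value of $G$ into a lower bound for its average over a fixed proportion of $P$. First I would discard trivialities: if $\|G\|_{L^1(P)}=+\infty$ there is nothing to prove, and if $\inf_P G\ge 0$ the inequality holds with any $C_P\ge 0$. Since a finite convex function on a polytope is bounded above (by the maximum of its values at the vertices), in the remaining case $G\in L^1(P)$ and $M:=-\inf_P G$ lies in $(0,+\infty)$; recall also $\vol(P)>0$.

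Next I would fix $\e>0$, pick $x_0\in P$ with $G(x_0)\le \e-M$ (an $\e$-almost minimizer, since the infimum need not be attained), and introduce the homothet of $P$ of ratio $\tfrac12$ centred at $x_0$,
\[
S:=x_0+\tfrac12(P-x_0)=\bigl\{\tfrac12 x_0+\tfrac12 z\ :\ z\in P\bigr\}.
\]
Every point of $S$ is a convex combination of $x_0\in P$ and a point of $P$, so $S\subset P$, and $\vol(S)=2^{-n}\vol(P)$. For $x=\tfrac12 x_0+\tfrac12 z\in S$, with $z=2x-x_0\in P$, convexity gives the pointwise bound $G(x)\le\tfrac12 G(x_0)+\tfrac12 G(2x-x_0)\le\tfrac12(\e-M)+\tfrac12 G(2x-x_0)$. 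I would then integrate this over $S$ and change variables $z=2x-x_0$ (Jacobian $2^{-n}$, $z$ ranging over $P$), obtaining
\[
\int_S G\ \le\ \frac{(\e-M)\,\vol(P)}{2^{n+1}}\ +\ \frac{1}{2^{n+1}}\int_P G .
\]

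To finish, I would isolate $M$: writing $\int_S G=\int_P G-\int_{P\setminus S}G$, the previous display rearranges to
\[
\frac{(M-\e)\,\vol(P)}{2^{n+1}}\ \le\ \int_{P\setminus S}G\ -\ \Bigl(1-\tfrac{1}{2^{n+1}}\Bigr)\int_P G\ \le\ \int_{P\setminus S}|G|+\int_P|G|\ \le\ 2\,\|G\|_{L^1(P)},
\]
and letting $\e\to0$ yields $-\inf_P G\le \dfrac{2^{n+2}}{\vol(P)}\,\|G\|_{L^1(P)}$, i.e.\ the lemma with $C_P=2^{n+2}/\vol(P)$. The genuine content is the convexity/homothety step: it promotes one very negative value of $G$ to a bound over a set of definite volume $2^{-n}\vol(P)$. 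The only technical wrinkle is that a general convex function on $P$ need not attain its infimum, since it may be discontinuous along $\partial P$ — which is why one works with an $\e$-almost minimizer and passes to the limit; in the situations used in the paper $G=G_\f$ is continuous up to $\partial P$ (e.g.\ by Guillemin's formula), so one may take $x_0$ a genuine minimizer and drop $\e$.
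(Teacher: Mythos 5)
Your proof is correct, but it follows a genuinely different route from the paper's. The paper treats only the model case $P=[0,1]$: after reducing (by approximation) to $G$ continuous up to the boundary and normalizing $\max_P G\le 0$, it inscribes the triangle with vertices $(0,0)$, $(x_{\min},G(x_{\min}))$, $(1,0)$ in the epigraph of $G$ intersected with $\{y\le 0\}$, so that $\tfrac12(-\inf_P G)=\mathrm{Area}(T)\le \|G\|_{L^1}$; the general polytope is left to the reader (the natural extension is the cone over $\partial P\times\{0\}$ with apex at the minimum, giving $C_P=(n+1)/\mathrm{vol}(P)$). You instead run an averaging argument: convexity along the homothety $x=\tfrac12 x_0+\tfrac12 z$ centred at an $\varepsilon$-almost minimizer $x_0$ propagates the very negative value $G(x_0)$ to a pointwise bound on the half-scale copy $S\subset P$ of volume $2^{-n}\mathrm{vol}(P)$, and integrating plus the change of variables yields $-\inf_P G\le 2^{n+2}\mathrm{vol}(P)^{-1}\|G\|_{L^1(P)}$. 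Your version works directly in any dimension with an explicit constant, and it handles cleanly the two points the paper glosses over (the infimum need not be attained, and $G$ need not be continuous up to $\partial P$) via the $\varepsilon$-minimizer; the paper's version buys a sharper constant in dimension one ($C_{[0,1]}=2$ versus your $2^{3}$) and a more visual picture. Both arguments are sound; yours is the more complete writeup of what the lemma actually asserts.
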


Cauchy-Schwarz inequality then yields
$$
||G_\f||_{L^1(P)} \leq \vol(P) \cdot ||G_\f||_{L^2(P)}=\vol(P) \cdot d(0,\f).
$$
Note finally that 
$
\sup_X \f =-\inf_P G_\f+O(1)
$
to conclude.

\begin{proof}
It is likely that such an inequality is well known and classical.
We explain the argument in the simple case of the interval $P=[0,1]$.

By approximation it suffices to consider the case of convex functions that are continuous up to the
 boundary.   We can assume that the maximum of $G$ is non positive:
observe that the situation becomes worse if we translate the graph of $G$ to the south.

Let $M$ be a point realizing the minimum of $G$ and draw the triangle $T$ joining 
the points $(0,0), M$ and $(1,0)$. By convexity of $G$ this triangle is contained in the 
epigraph of $G$. We thus have
$$
\frac{-\inf G}{2}=\rm{Area}(T) \leq \| G\|_{L^1}.
$$
\end{proof}

\subsubsection{Legendre transform of a minimum}

We let the reader check that the Legendre transform $G_{\f \vee \p}$
of the minimum of two convex functions is the maximum of the Legendre transforms
of these functions,
$$
G_{\f \vee \p}=\max( G_\f,G_\p).
$$

The orthogonality relation observed in Proposition \ref{prop:pyth} translates 
 in the toric setting as follows,
$$
d(\f,\f \vee \p)^2=\int_P \left(G_\f-G_{\f \vee \p} \right)^2
=\int_{\{G_\f <G_\p\}} \left(G_\f-G_{\p} \right)^2,
$$
while
$$
d(\p,\f \vee \p)^2=\
=\int_{\{G_\f >G_\p\}} \left(G_\f-G_{\p} \right)^2,
$$
so that
$$
d(\f,\f \vee \p)^2+d(\f \vee \p,\p)^2=d(\f, \p)^2.
$$

\subsubsection{Comparing $d$ and $I_2$}

It is tempting to think that the Mabuchi distance is equivalent to the strong topology of $\E^2$, but we could only partially prove this. Proposition \ref{prop:upperbound} shows that
$$
d \leq 2 I_2
$$
and conversely there exists a uniform constant $c_n=2^{2+n/2}>0$ such that
$$
I_2(\f,\p) \leq c_n  d(\f,\p)
$$
when $\f \leq \p$. One would like to get rid of this restriction, enlarging $c_n$ if necessary.
This is equivalent to checking that $I_2$ satisfies a quasi-triangle inequality.

Using Proposition \ref{prop:pyth} we can reformulate the inequalities obtained so far
by introducing the quantity $J_2(\f,\p)$ defined by
$$
J_2(\f,\p):=\sqrt{ I_2(\f,\f \vee \p)^2+I_2(\f \vee \p, \p)^2}.
$$
We let the reader check that  $J_2 \leq I_2$ and
$$
d \leq 2 J_2 \leq 2^{2+n/2} d.
$$

\end{document}